\newcommand{\fint}{\int \! \! \! \! \!-}
\newcommand{\Sphere}{\mathbb S}
\newcommand{\upto}{\nearrow}
\newcommand{\cut}{\mbox{\rm  cut }} 
\newcommand{\dist}{\mbox{\rm dist}}
\newcommand{\fall}{ \ \ \forall \ \ }
\newcommand{\itoinfty}{\stackrel{ i \to \infty}{\longrightarrow}}
\newcommand{\GHd}{d_{GH}}
\newcommand{\GHlim}{ {\rm GH} \lim }
\newcommand{\Riem}{{\rm Riem}}
\newcommand{\vol}{{\rm vol}}
\newcommand{\inj}{{\rm inj}}
\newcommand{\diam}{\mbox{\rm diam }}
\newcommand{\of}{\rm{o}}
\newcommand{\curlO}{ {\mathcal O} }
\newcommand{\curlT}{ {\mathcal T} }
\newcommand{\curlTi}{ {\mathcal T_{\infty} }}
\newcommand{\curlR}{ {\mathcal R} }
\newcommand{\Ricci}{ {\rm Ricci}}
\newcommand{\at}{ {\rm (${\rm a}_t$)} }
\newcommand{\bt}{ {\rm (${\rm b}_t$)} }
\newcommand{\ct}{ {\rm(${\rm c}_t$)} }
\newcommand{\dt}{ {\rm(${\rm d}_t$)} }
\newcommand{\ep}{\varepsilon}
\newcommand{\grad}{\nabla}
\newcommand{\la}{\lambda}
\newcommand{\si}{\sigma}\newcommand{\al}{\alpha}
\newcommand{\intersect}{\cap}
\newcommand{\be}{\beta}
\newcommand{\ga}{\gamma}
\newcommand{\tvz}{\tilde{v_0} }
\newcommand{\parti}[2]{\frac{\partial #1 } {\partial #2} }
\newcommand{\lap}{\Delta}
\newcommand{\upi}{{}^{^i}\!}
\newcommand{\on}{\over}
\newcommand{\ti}{\tilde}
\newcommand{\upig}{ \upi g }
\newcommand{\upghi}{ \upi {\hat g} }
\newcommand{\upl}{^{^l}\!}
\newcommand{\upg}{^{^g}\!}
\newcommand{\upgi}{^{^{^i \!g}}\!}
\newcommand{\partt}{ {\partial \on {\partial t}} }
\newcommand{\Sc}{ { \rm R }}
\newcommand{\timess}{\times}
\newcommand{\nonum}{ \nonumber }
\newcommand{\br}{ {\begin{rema}}}
\newcommand{\bl}{ {\begin{lemm}} }
\newcommand{\er} { {\end{rema}}}
\newcommand{\el}{{\end{lemm}} }
\newcommand{\bc}{  {\begin{coro}} }
\newcommand{\ec}{  {\end{coro}} }
\newcommand{\plusminus}{ \plusminus}
\newcommand{\de}{\delta}
\newcommand{\up}[1]{{^{^{#1}}\!\!}}
\newcommand{\Sp}{\mathbb{S}}
\newcommand{\N}{\mathbb{N}}
\newcommand{\R}{\mathbb{R}}
\newcommand{\Haus}{\mathcal{H}}
\newcommand{\RP}{{\mathbb R \mathbb P}}
\newtheorem{defi}{Definition}[section]
\newtheorem{prob}[defi]{Problem}
\newtheorem{rema}[defi]{Remark}
\newtheorem{exam}[defi]{Example}
\newtheorem{theo}[defi]{Theorem}
\newtheorem{lemm}[defi]{Lemma}
\newtheorem{coro}[defi]{Corollary}
\newtheorem{conj}[defi]{Conjecture}
\newtheorem{prop}[defi]{Proposition}
\title[Ricci flow of non-collapsed three manifolds with $\Ricci \geq -k$]{ Ricci flow of non-collapsed three manifolds 
\\ whose Ricci curvature is bounded from below}
\author[Miles Simon]{Miles Simon\\Universit\"at Freiburg${}^{{}^*}$}
\thanks{* part of this work was completed during the author's stay at
 Universit\"at M\"unster in the semester 2008/09. This work was
partially supported by SFB/Transregio 71}
\begin{document}
\maketitle
 \numberwithin{equation}{section}
\numberwithin{defi}{section}

\begin{abstract}
We consider complete (possibly non-compact) three dimensional Riemannian manifolds $(M,g)$ such that: 
a) $(M,g)$ is non-collapsed (i.e. the volume of an arbitrary ball of radius one is bounded from below
by $v>0$ ) b) the Ricci curvature of $(M,g)$ is bounded from below by $k$, 
c) the geometry at infinity of $(M,g)$ is not too extreme (or $(M,g)$ is compact). 
Given such initial data $(M,g)$ we show that a Ricci flow exists for a short time interval $[0,T)$,
where $T =T(v,k)>0$. 
This enables us to construct a Ricci flow of any (possibly singular) metric space $(X,d)$ which
arises as a Gromov-Hausdorff limit of a sequence of 3-manifolds which satisfy a), b) and c) uniformly.
As a corollary we show that such an $X$ must be a manifold.
This shows that the conjecture of M.Anderson-J.Cheeger-T.Colding-G.Tian
is correct in dimension three.

\end{abstract}

\vskip  0.07 true in
\section{Introduction and statement of results}
\vskip  0.1 true in

A smooth family of metrics $(M,g(t))_{t \in [0,T)}$ is a solution to the Ricci flow if
\begin{eqnarray}
&&\partt g(t)= - 2 \Ricci(g(t)) \ \forall \ t \in [0,T). \cr
\end{eqnarray}
We say that this solution has initial value $g_0$ if $g(\cdot,0) = g_0(\cdot)$.
The Ricci flow was introduced by R. Hamilton in \cite{Ha82} and has led to many
new results in differential geometry and topology
: see for example \cite{Pe1},\cite{Pe2}, \cite{BoWi}, \cite{ScBr},\cite{Ng},\cite{CCCY},\cite{Hu},\cite{Ma}
\cite{ChZh}.
For very good expositions of the papers of G.Perelman (\cite{Pe1,Pe2}) and parts 
thereof see
\cite{CaZh}, \cite{TiMo1,TiMo2}, \cite{KlLo} and
\cite{Re} and \cite{ChNi}.

In this paper we define a Ricci flow
for a class of possibly singular metric spaces, elements of
which arise as Gromov-Hausdorff limits
of sequences of complete, non-collapsed manifolds with Ricci curvature bounded from below.

More specifically, we 
consider the class of smooth, complete Riemannian manifolds $(M,g)$ 
which satisfy
\begin{itemize}
\item[(a)]  $\Ricci(g) \geq k$
\item[(b)] $ \vol({\upg B}_1(x)) \geq v_0 > 0$ for all $x \in M$
\end{itemize}

It is well known, see \cite{Gr}, that every sequence of smooth
Riemannian manifolds satisfying (a) contains a subsequence which converges
with respect to the Gromov-Hausdorff distance 
to a possibly singular metric space $(M,d)$
(see \cite{BuBuIv} for a definition of Gromov-Hausdorff distance: this distance 
is a weak measure of how close metric spaces are to being isometric).
With the expression 'possibly singular' we mean two things:
\begin{itemize}
\item it is possible that the limiting space $(M,d)$ is no longer 
a manifold (see Example \ref{examanderson} below) and 
\item it is possible that the resulting
metric $d$ is not smooth, even if $M$ is a manifold
(see Example \ref{examcone} below).
\end{itemize}

\begin{exam}\label{examanderson}(M.Anderson)
{\rm This example is from M.Anderson (see section 3 of \cite{AnEx}).
In the paper \cite{EgHa} T.Eguchi and A.Hanson construct a 
four dimensional Riemannian manifold $(M^4,h)$
where $M = TS^2$ and $\Ricci(h) = 0$ everywhere.
Asymptotically (far away from some base point) the Riemmanian manifold
looks like a cone over $\RP^3$. More explicitly:
if we rescale the metric, $(M_i,h_i):=(M,{1 \on i}h)$, then
$\Ricci(h_i) = 0$, $\vol(B_1(x),h_i) \geq v_0$ for all $i \in \N$ 
and $ (M_i,d(h_i)) \to (N,l)$ as $i \to \infty$ where
$N = (\R^+_0\times \RP^3)/ (\{0\}  \times \RP^3)$ with the quotient topology, where
$l((r,x),(s,y)):= \sqrt{r^2 + s^2 - 2rs \cos(\ga(x,y)) }$
for all $r, s \in \R^+_0$ and all $x,y \in \RP^3$
and $\gamma:\RP^3 \times \RP^3 \to \R^+_0 $ is the standard
distance on $\RP^3$.
In particular, $N$ is not a manifold.}
\end{exam}

\begin{exam}\label{examcone}
{\rm Let $(M^n,h)$ be a non-negatively curved {\it smoothed out} cone over $S^{n-1}$. 
That is, we give $M^n = \R^n =  (\R^+_0\times \Sp^{n-1})/ (\{0\} \times \Sp^{n-1})$
a smooth metric $h$ such that $\sec(h) \geq 0$ everywhere and 
$h(r,\al) = dr^2 \oplus cr^2 \ga(\al)$ for $r \geq 1$ and some constant $0<c < 1$,
$\ga$ the standard metric on $S^{n-1}$.
Let $(M_i,h_i):= (M,{1 \on i}h)$. Clearly $\vol(B_1(x),h_i) \geq v_0$ for
some $v_0 >0$ and all $i \in \N$, all $x \in M_i$.
Also, $(M_i,d(h_i)) \to (M,l)$ 
where 
$l((r,x),(s,y)):= \sqrt{r^2 + s^2 - 2rs \cos(\sqrt{c}\psi(x,y)) }$
for all $r, s \in \R^+_0$ and all $x,y \in \Sp^{n-1}$
and $\psi:\Sp^{n-1} \times \Sp^{n-1} \to \R^+_0 $ is the standard
distance on $\Sp^{n-1}$.
The distance $l:M \times M \to \R$
is then continuous, but not differentiable everywhere.
For example: if $p = (1,0, \ldots,0),q(x) = (0,x,0,\ldots,0) \in \R^n$ in euclidean coordinates, then
$f(x):= l^2( p, q(x)) =  1 + |x|^2 - 2|x| \cos(\sqrt{c} (\pi/2))$ is continuous
in $x=0$ but not differentiable there 
(since $\sqrt{c} < 1 \implies \cos(\sqrt{c} (\pi/2)) \neq 0$).}
\end{exam}

\begin{rema}
Any metric space $(M,d)$ which arises as the GH limit of a sequence of
two dimensional 
Riemannian manifolds satisfying (a) and (b) is itself a manifold. 
\end{rema}
This is because: in dimension two
$\Ricci \geq -k^2 \implies \sec \geq -2k^2$.
Then a Theorem of G.Perelman says that $(M,d)$ is a manifold:
see \cite{Ka}.

So we see that in dimension two any metric space $(M,d)$ which arises as the GH limit
of a sequence of Riemannian manifolds satisfying (a),(b) must be a manifold, and
in dimension four, there are examples where such $(M,d)$´s are not
manifolds.
It is a conjecture of M.Anderson-J.Cheeger-T.Colding-G.Tian (see the introduction
of \cite{ChCo}), that
\begin{conj}(M.Anderson-J.Cheeger-T.Colding-G.Tian ) 
Any metric space $(M,d)$ which arises as the GH limit
of a sequence of three dimensional Riemannian 
manifolds satisfying
(a) and (b) is itself a manifold. 
\end{conj}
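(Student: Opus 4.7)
The plan is to use the short-time existence theorem for the Ricci flow stated in the abstract as a smoothing device for the potentially singular Gromov--Hausdorff limit $(X,d)$, and then to identify $X$ with a smooth three-manifold via compactness. Given an approximating sequence $(M_i,g_i)$ satisfying (a), (b), (c) uniformly and converging to $(X,d)$, the main theorem of the paper produces Ricci flows $(M_i, g_i(t))_{t \in [0,T)}$ with a common existence time $T = T(v_0,k) > 0$.

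The next step is to show that for each fixed $t \in (0,T)$ the flows $g_i(t)$ enjoy a uniform curvature bound $|\Riem(g_i(t))| \leq C(v_0,k,t)$ together with a uniform positive lower injectivity radius bound. Such estimates should fall out of the construction of the flow itself, since producing a Ricci flow from initial data with only a lower Ricci bound forces one to prove interior smoothing estimates along the way. Granted these two uniform bounds, Hamilton's compactness theorem for Ricci flows provides, after passing to a subsequence, a smooth limit Ricci flow $(X_\infty,g_\infty(t))_{t \in (0,T)}$ on a smooth complete three-manifold $X_\infty$, with $(M_i,g_i(t))$ converging to it smoothly on compact subsets.

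To identify $X_\infty$ with the original limit $X$, one needs a uniform distance distortion estimate of the form
\begin{equation*}
  |d(g_i(t))(x,y) - d(g_i(0))(x,y)| \leq \eta(t), \qquad \eta(t) \to 0 \text{ as } t \to 0^+,
\end{equation*}
with $\eta$ depending only on $v_0$ and $k$. Combined with the smooth convergence for $t > 0$, this forces $(X_\infty, d(g_\infty(t))) \to (X,d)$ in the Gromov--Hausdorff sense as $t \to 0^+$, so by uniqueness of GH limits $(X,d)$ is isometric to the metric completion of $(X_\infty, d(g_\infty(t)))$ as $t \to 0$. In particular $X$ is homeomorphic to $X_\infty$, a smooth manifold.

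The main obstacle is the distance distortion estimate down to $t = 0$. Along Ricci flow one controls $\partial_t d$ via a two-sided bound on $\Ricci$, which is not available at $t = 0$ here since only a lower Ricci bound is assumed on the initial data; an upper bound only emerges as the flow smooths things out, and it blows up as $t \searrow 0$. Obtaining $\eta(t) \to 0$ therefore requires integrating this possibly unbounded distortion rate in an averaged way, exploiting the non-collapsing assumption together with the special features of dimension three (where a lower Ricci bound controls the full curvature tensor up to its trace) to prevent a finite drift of distances in the limit $t \to 0^+$. Once that estimate is in hand, the rest of the argument is a routine application of compactness and uniqueness of Gromov--Hausdorff limits.
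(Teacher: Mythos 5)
Your proposal follows essentially the same route as the paper: flow the approximating sequence with a uniform existence time, establish the $|\Riem|\leq c/t$ smoothing estimate plus injectivity radius lower bound and the $\sqrt t$-type distance distortion estimate $(d_t)$, take a Hamilton limit, and then identify the limit with $(X,d)$ via Gromov--Hausdorff convergence as $t\searrow 0$ --- this is exactly the content of Theorem \ref{maintheo}, Lemma \ref{globaltimelemma}, Lemma \ref{seclemma2}, Lemma \ref{diamlemm}, and Theorem \ref{realmaintheo2}. You correctly flag the distance distortion as the crux and note the role of non-collapsing and the dimension-three structure of the curvature, which is precisely how the paper closes that gap.
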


In this paper we obtain as a consequence of one of our main theorems
(Theorem \ref{realmaintheo2} in this paper) that this conjecture is correct, if each of the
manifolds occuring in the sequence is compact or we demand that the geometry at infinity is
controlled in a certain sense (see condition (c) and $\ti{\rm c}$ below).
That is we will assume that each of the manifolds 
$(M,g)$ occuring in the sequence satisfy additionally   
\begin{itemize}
\item[(c)] $\sup_M | \Riem(g) | < \infty$ ({\it bounded curvature})
\end{itemize}
or
\begin{itemize}
\item[(${\ti{\rm{c}}}$ )] Let $f:M \to \R$ be the exponential
function composed with its self $m$-times, and $\rho:M \to \R_0^+$
the distance function from a fixed base point $b$, $\rho(x) := \dist(x,b)$.
We assume that 
\begin{itemize}

\item[ (${\ti{\rm{c_1}}}$ )]
$\rho:M-B_R(b) \to \R$ is smooth for some $R>0$,
and $k-concave$ there, that is
$$ \grad^2 \rho \leq k,$$ on $(M-B_R(b))$ and
\item[ (${\ti{\rm{c_2}}}$)]
$$\lim_{r \to \infty} (\sup_{x \in B_r(b)}|\Riem(x)|/f(r))  = 0.$$
\end{itemize}
\end{itemize}

\begin{rema}
Note that condition (c) is trivially satisfied if $M$ is compact.
\end{rema}
\begin{rema}
Assume ($\ti{\rm c_2}$) is satisfied for some $m \in N$, and that
the sectional curvatures of $(M,g)$ are larger
than $-l$ on $M$  and that $\cut(b) \cap (M - B_R(b)) = \emptyset$ for some $R> 0$.
Then condition ($\ti{\rm c_1}$) is satisfied for some $k = k(n,l)$ and some larger 
$m$ (depending on the initial $m$), as as one sees using the hessian comparison principle (see for example chapter 1 of \cite{ScYa}).
\end{rema}

Under these restrictions, we obtain that the conjecture of Anderson-Cheeger-Colding-Tian is correct.
That is, we prove:
\begin{theo}\label{chcoconj}
Let $(X,d_X)$ be a metric space arising as the GH limit of a seqeunce of three dimensional 
Riemannian manifolds $(M_i,g_i)$, $i \in \N$ each of which satisfies
$(a),(b)$ and $(c)$ or each of which satisfies $(a),(b)$ and $(\ti c)$.
Then $X$ is a three dimensional manifold.
If furthermore each of the $(M_i,g_i)$ has diameter bounded above by a uniform
constant $d_0 < \infty$,
then $M_i $ is diffeomorphic to $X$ for all $i$ sufficiently large.
\end{theo}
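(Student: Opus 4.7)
The plan is to deduce the theorem from Theorem \ref{realmaintheo2} by flowing the entire sequence and taking limits at positive time, where the flow has smoothed out the possibly singular initial data.

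First I would apply Theorem \ref{realmaintheo2} to each $(M_i,g_i)$ to produce, uniformly in $i$, a Ricci flow $(M_i,g_i(t))_{t \in [0,T)}$ with $T = T(v_0,k) > 0$. The standard kind of estimate accompanying such a construction (and which one should assume has been established earlier in the paper) is a curvature decay of the form $\sup_{M_i}|\Riem(g_i(t))| \leq c_0/t$ together with a distance-distortion estimate of the form $|d_{g_i(t)}(x,y) - d_{g_i(0)}(x,y)| \leq \eta(t)$ with $\eta(t) \to 0$ as $t \downarrow 0$, both with constants depending only on $v_0$ and $k$. Fix basepoints $p_i \in M_i$ converging to a point $p \in X$.

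Next I would use Hamilton's compactness theorem for Ricci flows, using the uniform bound $|\Riem(g_i(t))| \leq c_0/t$ and the uniform non-collapsing (which is inherited along the flow from (b) and the uniform curvature bound, up to standard constants), to extract a subsequence so that $(M_i,g_i(t),p_i)$ converges in the pointed smooth Cheeger-Hamilton sense on $(0,T)$ to a smooth pointed Ricci flow $(X_\infty,g_\infty(t),p_\infty)_{t \in (0,T)}$. In particular $X_\infty$ is a smooth three-manifold underlying every $(X_\infty,g_\infty(t))$, $t > 0$. Applying the distance estimate in the limit, $d_{g_\infty(t)}$ extends continuously to a metric $d_\infty := \lim_{t \downarrow 0} d_{g_\infty(t)}$ on $X_\infty$, and the commutative diagram of GH limits (using uniqueness of GH limits and that $(M_i,g_i(0)) \to (X,d_X)$) gives an isometry $(X_\infty,d_\infty) \cong (X,d_X)$. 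Hence $X$ carries the structure of the smooth three-manifold $X_\infty$, proving the first assertion.

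For the diffeomorphism statement, the uniform diameter bound $\diam(M_i,g_i) \leq d_0$ together with non-collapsing bounds $\vol(M_i,g_i)$ from below and the uniform upper curvature bound at time $t>0$ give a uniform upper bound on $\diam$ and hence a uniform upper bound on the number of balls in a fixed covering, so the $C^\infty_{loc}$-convergence at any fixed $t_0 \in (0,T)$ is in fact global $C^\infty$-convergence of compact manifolds. By standard Cheeger-Gromov compactness, this produces diffeomorphisms $F_i : X_\infty \to M_i$ for all large $i$, yielding the diffeomorphism $X \cong X_\infty \cong M_i$.

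The main obstacle, and the step that really rests on content established earlier in the paper, is guaranteeing the two pieces needed to identify the $t \downarrow 0$ limit with $(X,d_X)$: the uniform curvature decay $|\Riem(g_i(t))| \leq c_0/t$ (so that smooth limits of flows exist on $(0,T)$), and the distance-distortion control $|d_{g_i(t)} - d_{g_i(0)}| \leq \eta(t) \to 0$ (so that the initial data of the limit flow is genuinely $(X,d_X)$). Once these two ingredients are in hand the rest is a more or less routine Cheeger-Gromov-Hamilton compactness argument combined with uniqueness of GH limits.
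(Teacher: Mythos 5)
Your proposal is correct and follows essentially the same route the paper uses: produce the individual flows with uniform $|\Riem|\le c_0/t$, non-collapsing, and distance-distortion estimates from Theorem \ref{maintheo2} (not \ref{realmaintheo2}, which is the sequence/limit theorem you are in effect reconstructing), then pass to a Hamilton limit on $(0,T)$, identify the $t\downarrow 0$ Gromov--Hausdorff limit of the limit flow with $(X,d_X)$ via the triangle inequality for $\GHd$ and show the limiting $t\to 0$ distance $l$ induces the manifold topology, and finally invoke Cheeger--Gromov compactness at a fixed $t_0>0$ (with the diameter bound propagated forward by $({\rm d}_t)$) for the diffeomorphism statement. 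The paper deduces Theorem \ref{chcoconj} directly as a corollary of Theorem \ref{realmaintheo2}, whose proof in Section 9 is precisely the argument you sketch.
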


\begin{rema}
In the case that all manifolds in the sequence above satisfy
a two sided Ricci curvature bound,
$|\Ricci| \leq k^2$, a bound on the integral of the curvature tensor
$\int_M |\Riem|^{3/2} \leq D$  and (b) is satisfied, M.Anderson also proved
that the limit space $X$ is a manifold: see Corollary 2.8 in \cite{AnCon}.
Later, Cheeger-Colding-Tian (see \cite{ChCoTi} Theorem 1.15) 
proved that the singular
set of the limit space $(X,d_X)$
is empty, if all manifolds occuring in the sequence above
satisfy (a),(b) and $\fint_{B_1(x)}|\Riem|^{3/2} \leq D$ 
for all balls of radius one.
The condition
$\int_M |\Riem|^{3/2} \leq D$ prohibits non-flat cones over spheres occuring in $(X,d)$.
Theorem \ref{chcoconj} allows the occurence of such cones.
\end{rema}

The method we use to prove this theorem is as follows.
Let $(M_i,g_i(0))$ be a sequence of manifolds satisfying (a),(b) and (c).
We flow each of the $(M_i,g_i(0))$ by Ricci flow to obtain solutions
$(M_i,g_i(t))_{t \in [0,T_i)}$. Then we prove
uniform estimates (independent of $i$) for the solutions.
Once we have these estimates, we are able to take a limit of these
solutions, to obtain a new solution $(M,g(t))_{t\in (0,T)}$ where $M$ is some 
manifold. This solution will also (by construction: it is a smooth limit)
satisfy similar estimates to those obtained
for $(M_i,g_i(t))_{t \in [0,T_i)}$.
Using these estimates, we show that $(M,d(g(t))) \to (X,d_X)$
in the Gromov-Hausdorff sense as $t \searrow 0$, and that in fact
$X$ is diffeomorphic to $M$.
The most important step in this procedure is proving uniform estimates for the 
solutions $(M_i,g_i(t))_{t \in [0,T_i)}$.
The case that the $(M,g_i(0))$ satisfy (a),(b) and ($\ti c$) is reduced to the case
that the $(M,g_i(0))$ satisfy (a),(b) and (c) by a 
conformal deformation of  the starting metrics 
(which leave the starting metrics unchanged on larger and larger balls
as $i \to \infty$:
see chapter 7 for details).

The estimates we require to carry out this procedure are obtained in the following theorem
(see Theorem \ref{maintheo2}).

\begin{theo}\label{intromaintheo2}
Let $k \in \R$, $0<v_0 \in \R$, $m \in \N$ and
$(M, g_0)$ be a three (two) manifold satisfying
(a),(b) and ($\ti {\rm c}$) with constants $k$, $v_0$ and $m$ respectively.
Then there exists a $T = T(v_0,k,m) >0$ and $K = K(v_0,k,m)>0$ and
a solution $(M,g(t))_{t \in [0,T)}$ to Ricci-flow satisfying
\begin{itemize}
\item[(${\rm a}_t$)] 
$\Ricci(g(t)) \geq - K^2 \label{est2}  \ \ \forall \ \  t \in (0,T)$
\item[(${\rm b}_t$)]
$ \vol(B_1(x,t)) \geq \frac{v_0}{2} > 0 \ \ \forall  x \in M, \   \forall \ \  t \in (0,T).$
\item[(${\rm c}_t$)]
$\sup_{M} |\Riem(g(t))| \leq {K^2 \on t} \ \ \forall \ \  t \in (0,T)$ 
\item [(${\rm d}_t$)]
$e^{K^2(t-s)} d(p,q,s) \geq  d(p,q,t) 
\geq  d(p,q,s) -K^2(\sqrt t  - \sqrt s)$\hfill\break
for all $0 < s \leq t \in (0,T)$
\end{itemize}
(note that these estimates are trivial for $t=0$).

\end{theo}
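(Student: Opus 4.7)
The plan is first to reduce condition $(\tilde c)$ to the bounded-curvature condition $(c)$ by the conformal deformation outlined in Chapter~7 of the paper, so one may assume $\sup_M|\Riem(g_0)|<\infty$. Shi's short-time existence theorem then produces a Ricci flow $(M,g(t))_{t\in[0,T_{\max})}$ with curvature bounded on compact subintervals, and the task is to extract uniform estimates $(\mathrm a_t)$--$(\mathrm c_t)$ on an interval $[0,T)$ with $T,K$ depending only on $v_0,k,m$, in particular independently of the (possibly enormous) pointwise bound on $|\Riem(g_0)|$.

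The heart of the proof is a contradiction-and-blow-up argument for the curvature estimate $(\mathrm c_t)$. Fix candidate constants $T$ and $K$ and let $T^*\in(0,T]$ be the supremum of times at which all three of $(\mathrm a_t)$--$(\mathrm c_t)$ hold with these constants; continuity and the initial bounded-curvature data force $T^*>0$. Suppose, for contradiction, that no uniform lower bound for $T^*$ exists. Then one obtains a sequence $(M_i,g_{i,0})$ satisfying $(a),(b)$ with fixed $v_0,k$ whose corresponding flows $g_i(t)$ saturate $(\mathrm c_t)$ at times $t_i\to 0$. By a standard point-picking argument one selects $(x_i,t_i)$ and rescales by $Q_i:=|\Riem|(x_i,t_i)\to\infty$ so that on a large parabolic neighborhood $|\Riem|\leq 2$. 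Perelman's $\kappa$-non-collapsing theorem, whose hypotheses are supplied by the already-established $(\mathrm b_t)$ on $[0,T^*)$, gives $\kappa$-non-collapsing of the rescaled flows, and Hamilton's compactness theorem then produces a complete ancient limit $(M_\infty,g_\infty(s))_{s\in(-\infty,0]}$ with $|\Riem(x_\infty,0)|=1$ that is $\kappa$-non-collapsed.

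In dimension three, the Hamilton--Ivey pinching estimate (triggered by $Q_i\to\infty$) forces $g_\infty$ to have nonnegative sectional curvature, so the limit is a three-dimensional $\kappa$-solution; Perelman's structure theorem for such solutions, together with $|\Riem(x_\infty,0)|=1$ and $\kappa$-non-collapsing, yields the desired contradiction. The main obstacle is the bootstrap needed to feed the blow-up argument: at time $T^*$ one needs $(\mathrm a_t)$ and $(\mathrm b_t)$ already in force on $[0,T^*)$, and both must be deduced from $(\mathrm c_t)$ together with the initial data. Here $(\mathrm a_t)$ follows from Hamilton's maximum-principle evolution of the Ricci lower bound in three dimensions, while $(\mathrm b_t)$ follows by combining the evolution $\partial_t \, d\vol = -\Sc\, d\vol$ (with $\Sc\geq-3K^2$) and the distance-distortion estimate to compare $B_1(x,t)$ with a ball of comparable size at time $0$. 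The two-dimensional case is easier, since $(a)$ already implies a two-sided sectional curvature bound and the Hamilton--Ivey step is not needed.

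Finally, estimate $(\mathrm d_t)$ is a direct consequence of the others. The upper bound $d(p,q,t)\leq e^{K^2(t-s)}d(p,q,s)$ comes from $\partial_t g=-2\Ricci\leq 2K^2 g$, which by Gronwall gives $g(t)\leq e^{2K^2(t-s)}g(s)$ and hence the stated distance bound. The lower bound $d(p,q,t)\geq d(p,q,s)-K^2(\sqrt t-\sqrt s)$ follows from Perelman's distance-distortion lemma (Lemma~8.3 of \cite{Pe1}) applied with the bound $|\Riem|\leq K^2/t$ from $(\mathrm c_t)$ and integrated in $\sqrt t$.
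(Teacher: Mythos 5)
Your overall architecture matches the paper's: reduce $(\tilde{\rm c})$ to (c) via the conformal deformation, run Shi's short-time flow, bound the blow-up time from below by a bootstrap in which a blow-up/contradiction argument gives $|\Riem|\leq K^2/t$ (this is the paper's Lemma \ref{globaltimelemma}), a Hamilton--Ivey-type pinching preserves the Ricci lower bound (the paper's Lemma \ref{seclemma2}), and the distance distortion bound $(\mathrm d_t)$ comes from $\Ricci\geq -K^2$ and $|\Riem|\leq K^2/t$. These parts are sound and closely parallel the paper (the paper uses Bishop--Gromov at time $t_i$ directly to get non-collapsing of the rescaled flows, rather than invoking Perelman's $\kappa$-non-collapsing theorem; since volume non-collapsing is part of the bootstrap hypothesis this is both simpler and avoids worrying about whether Perelman's entropy argument applies on a non-compact manifold, but the outcome is the same).

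The genuine gap is your derivation of $(\mathrm b_t)$. You propose to combine $\partial_t\,d\vol=-\Sc\,d\vol$ with $\Sc\geq -c(n)K^2$ and the distance distortion estimate. But $\Sc\geq -c(n)K^2$ gives an \emph{upper} bound $d\vol_{g(t)}\leq e^{c(n)K^2 t}\,d\vol_{g(0)}$ on the volume element, whereas $(\mathrm b_t)$ asks for a \emph{lower} bound on volume. To bound $d\vol_{g(t)}$ from below by $d\vol_{g(0)}$ you would need an \emph{upper} bound on $\int_0^t\Sc$, and the only upper bound available is $\Sc\leq c(n)K^2/\tau$, which is not integrable at $\tau=0$. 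So the direct integration fails precisely at the scale $\tau\to 0$ where it matters. This is the reason the paper goes through a compactness argument instead: Corollary \ref{volcorr} passes to a pointed Gromov--Hausdorff limit of a putative sequence of counterexamples using the distance distortion $(\mathrm d_t)$, and then invokes the Cheeger--Colding theorem that volume is continuous under Gromov--Hausdorff limits of non-collapsed spaces with Ricci bounded below. That theorem (or some other mechanism of comparable depth) is genuinely needed here, and your sketch should replace the ODE-for-$d\vol$ argument with it; as written, that step of the bootstrap does not close.

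A smaller point: $(\mathrm a_t)$ is not simply ``Hamilton's maximum-principle evolution of the Ricci lower bound.'' A Ricci lower bound is \emph{not} preserved by Ricci flow in dimension three; what is proved (Lemma \ref{seclemma2}) is the pinching estimate $\Ricci\geq -\ep(1+kt)g-\ep(1+kt)t\,\Sc\,g$, which must then be combined with $\Sc\leq c_0/t$ from $(\mathrm c_t)$ to yield a constant lower bound. You do gesture at this by saying $(\mathrm a_t)$ ``must be deduced from $(\mathrm c_t)$ together with the initial data,'' but the phrase ``evolution of the Ricci lower bound'' undersells the pinching step.
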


\begin{rema}
A similar result was proved in the paper \cite{Si4} (see Theorem 7.1 there), under the extra assumptions
that $(M, g_0)$ has $\diam(M,g_0) \leq d_0 < \infty$ and $\Ricci(g_0) \geq
-\ep_0(d_0,v_0)$ where $\ep(d_0,v_0)>0$ is a small constant depending
on $d_0$ and $v_0$.
\end{rema}

To help us prove Theorem \ref{intromaintheo2} we prove estimates  on the rate 
at which the infimum of the Ricci curvature can decrease, and on the rate at which 
the distance function and volume of such a solution can change 
(see Lemma \ref{diamlemm} and \ref{volcorr}). 
As an application of  Theorem \ref{intromaintheo2} and these estimates
we get (Theorem \ref{realmaintheo2} in this paper).

\begin{theo}\label{introthm}
Let $k,v_0,m \in \R$ be fixed.
Let $(M_i, \upi g_0)$ be a sequence of three 
(or two) manifolds satisfying (a),(b),(c) or (a),(b),($\ti c$), (with constants $k,v_0,m$ indepedent of $i$) and let
$(X,d,x) = \lim_{i \to \infty} (M_i, d(\upi g_0), x_i)$ 
be a pointed Gromov-Hausdorff
limit of this sequence.
Let $(M_i,\upig(t))_{t \in [0,T)}$ be the solutions to Ricci-flow 
coming from the theorem above. Then (after taking a sub-sequence if necessary)
there exists a Hamilton limit solution $(M,g(t),x)_{t \in (0,T)} := \lim_{i \to \infty}
(M_i,\upig(t),x_i)_{t \in (0,T)}$ satisfying  \at, \bt, \ct, \dt, and
\begin{itemize}
\item[(i)] $(M,d(g(t)),x) \to (X,d,x)$ in the Gromov-Hausdorff sense
as $t \to 0$.
\item[(ii)] $M$ is diffeomorphic to $X$. In particular, $X$ is a manifold.
\end{itemize}
\end{theo}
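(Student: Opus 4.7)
The plan is to first apply Theorem \ref{intromaintheo2} to each $(M_i, \upi g_0)$, obtaining for each $i$ a Ricci flow $(M_i, \upi g(t))_{t \in [0,T)}$ with $T = T(v_0,k,m) > 0$ satisfying the estimates \at--\dt{} with uniform constants. On any compact time slab $[\ep_1, T - \ep_2] \subset (0,T)$, the curvature estimate \ct{} gives $\sup_{M_i}|\Riem(\upi g(t))| \leq K^2/\ep_1$ uniformly in $i$ and $t$; combined with the volume non-collapsing \bt{} and the Cheeger--Gromov--Taylor injectivity radius estimate, this produces a uniform positive lower bound on $\inj(\upi g(t))$ at each time. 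Hamilton's compactness theorem for Ricci flows, together with a diagonal exhaustion over nested slabs, then furnishes a subsequence so that $(M_i, \upi g(t), x_i)_{t \in (0,T)}$ converges in the sense of Hamilton to a pointed Ricci flow $(M, g(t), x)_{t \in (0,T)}$. The estimates \at--\dt{} pass to this limit by smooth convergence on compact sets.

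For (i), the essential tool is \dt. Applied to each $\upi g(t)$ and letting $s \searrow 0$ (using continuity of the flow at $t=0$, which holds since it is smooth on $[0,T)$), one obtains
\begin{equation*}
e^{K^2 t}\, d(\upi g_0)(p,q) \;\geq\; d(\upi g(t))(p,q) \;\geq\; d(\upi g_0)(p,q) - K^2 \sqrt{t}
\end{equation*}
for all $p,q \in M_i$ and $t \in (0,T)$. Consequently, on any ball of fixed radius $R$ about $x_i$ the distance $d(\upi g(t))$ differs from $d(\upi g_0)$ by an error $\eta(t,R)$ that is independent of $i$ and tends to $0$ as $t \to 0$. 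Since by hypothesis $(M_i, d(\upi g_0), x_i) \to (X, d, x)$ in pointed Gromov--Hausdorff sense, the triangle inequality for $\GHd$ at pointed scale $R$ yields
\begin{equation*}
\GHd\bigl((M_i, d(\upi g(t)), x_i),\, (X, d, x)\bigr) \leq \eta(t,R) + o_i(1).
\end{equation*}
At each fixed $t > 0$, smooth Hamilton convergence gives $(M_i, d(\upi g(t)), x_i) \to (M, d(g(t)), x)$ in pointed GH, and a diagonal argument in $(i,t)$ then produces $(M, d(g(t)), x) \to (X, d, x)$ in pointed GH as $t \searrow 0$, proving (i).

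For (ii), I would apply \dt{} directly to the limit flow: on any bounded subset of $M$ the family $t \mapsto d(g(t))(p,q)$ is uniformly $\sqrt{t}$-Cauchy as $t \searrow 0$, hence converges locally uniformly to a limit distance $d_0$ on $M$. By (i), the pointed metric space $(M, d_0, x)$ is isometric to $(X, d, x)$. This isometry is in particular a homeomorphism $M \to X$, so the smooth structure of the manifold $M$ transports to $X$ and yields the claimed diffeomorphism. The main technical obstacle is the diagonal argument in (i): the Hamilton convergence $\upi g(t) \to g(t)$ is smooth on compact subsets for each fixed $t > 0$, while the pointed GH convergence of the initial data lives at $t = 0$, precisely where the Hamilton estimates (curvature blow-up $K^2/t$ and the non-collapsing lower bound $v_0/2$) begin to degenerate. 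The role of \dt{} is to bridge the two regimes by controlling how much distances can drift as $t \to 0$; executing the diagonal so that the smooth Hamilton limits, the GH hypothesis at $t=0$, and the limiting distance $d_0$ are mutually compatible is the most delicate part of the argument.
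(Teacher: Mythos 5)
Your proof of part (i) and the construction of the Hamilton limit solution match the paper's argument. The gap is in part (ii), specifically in the step from ``$(M,d_0,x)$ is isometric to $(X,d,x)$'' to ``$M$ is diffeomorphic to $X$.''

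The isometry of metric spaces $(M,d_0)\cong(X,d)$ is a homeomorphism with respect to the $d_0$-metric topology on $M$. But what you need is a homeomorphism from the \emph{manifold} $M$ — that is, $M$ with the topology induced by the smooth metrics $g(t)$ for $t>0$ — to $X$. These are not a priori the same: locally uniform convergence $d(g(t))\to d_0$ with $d_0$ a genuine metric does not by itself guarantee that $d_0$ induces the same topology as the $d(g(t))$. The paper proves this identification explicitly, via the ball inclusions
\begin{equation*}
{\up{d(t)} B}_{(r - c_2\sqrt{t})}(p) \;\subset\; {\upl B}_{r}(p) \;\subset\; {\up{d(t)}  B}_{re^{c_1t}}(p),
\end{equation*}
which follow from the \emph{two-sided} estimate \dt{} applied to the limit flow as $s\to 0$. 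Note also that your focus on the $\sqrt t$-Cauchy property uses only the lower distance bound in \dt; the exponential upper bound $e^{K^2(t-s)}d(s)\geq d(t)$ is what gives both the inclusion on the right above and the positivity $d_0(p,q)\geq e^{-K^2 t}d(g(t))(p,q)>0$ for $p\neq q$. Once the two topologies are identified, the identity map $(M,l)\to(M,d(t))$ is a homeomorphism, hence $X$ is homeomorphic to the smooth 3-manifold $M$, and Moise's theorem (uniqueness of smooth structures in dimension $\leq 3$, which the paper invokes as ``in three dimensions every manifold has a unique smooth maximal structure'') upgrades this to a diffeomorphism. You should insert the ball-inclusion argument and cite Moise; as written, your (ii) asserts the key homeomorphism rather than proving it.
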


As a corollary to this result and Lemma \ref{seclemma2}
we obtain the following corollary (Corollary \ref{semicoro} in this paper).

\begin{coro}
Let $(M_i, {\upi g}_0)$,$i \in \N$  be a sequence of three (or two) manifolds
satisfying (b), (c) or (b),${\rm (\ti c)}$, and
$$\Ricci(M_i,{\upi g}_0) \geq - \frac 1 i .$$
Let $(X,d_X) = \GHlim_{i \to \infty}(M_i,d(  {\upi g}_0) )$
 (notation $\GHlim$ refers to the Gromov Hausdorff limit).
Then the solution 
$(M,g(t),x)_{t \in (0,T)}$ obtained in Theorem \ref{introthm}
satisfies $$ \Ricci(g(t)) \geq 0$$ for all $t \in (0,T)$ and $(X,d_X)$ is
diffeomorphic to $(M,g(t))$  for all $t \in (0,T)$.
In particular, combining this with the results of W.X.Shi \cite{Sh2} and
R.Hamilton\cite{Ha82}, we get that
$(X,d_X)$ is diffeomorphic to  $\R^3$, $\Sphere^2 \times \R$ or $\Sphere^3$ 
modulo a  group of fixed point free isometries in the
standard metric.
\end{coro}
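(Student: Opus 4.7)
The plan is to combine Theorem~\ref{introthm} with Lemma~\ref{seclemma2} to produce a complete Ricci flow on the limit with nonnegative Ricci curvature, and then to apply the classification results of Hamilton and Shi.

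First I would verify that the hypotheses of Theorem~\ref{introthm} hold uniformly. Since $\Ricci({\upi g}_0) \geq -1/i \geq -1$ for every $i$, condition (a) is satisfied with the fixed constant $k = -1$; combined with the standing assumptions (b) and (c) (or $(\ti c)$), Theorem~\ref{introthm} yields, after extraction of a subsequence, a Hamilton limit Ricci flow $(M, g(t), x)_{t \in (0,T)}$ satisfying $(\mathrm{a}_t)$--$(\mathrm{d}_t)$, with $(M, d(g(t)), x) \to (X, d_X, x)$ in the pointed Gromov--Hausdorff sense as $t \searrow 0$, and with $M$ diffeomorphic to $X$.

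Next I would promote the Ricci lower bound on the limit flow from $-K^2$ up to $0$. Applying Lemma~\ref{seclemma2} to each flow $(M_i, {\upi g}(t))$---this lemma quantifies the rate at which the infimum of $\Ricci$ can decrease along the flow in terms of its initial value and the curvature bounds from $(\mathrm{c}_t)$---I obtain estimates of the form $\Ricci({\upi g}(t)) \geq -\eta_i(t)$, where $\eta_i(t) \to 0$ as $i \to \infty$ for each fixed $t \in (0,T)$. Since the Hamilton convergence is smooth on compact sets, these pointwise bounds pass to the limit to give $\Ricci(g(t)) \geq 0$ on $M$ for every $t \in (0,T)$, which is the first assertion of the corollary.

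Finally I would invoke the classification. Because $(M, g(t))$ is now a complete three-manifold of bounded curvature with $\Ricci \geq 0$, Hamilton's strong maximum principle for the evolving Ricci tensor either forces $\Ricci > 0$ strictly---so that Hamilton's theorem \cite{Ha82} (in the compact case) and Shi's theorem \cite{Sh2} (in the noncompact case) identify $M$ up to diffeomorphism with $\Sphere^3/\Gamma$ or with $\R^3$---or produces an isometric splitting $M \cong \R \times N^2$ with $N^2$ two-dimensional and of nonnegative Gauss curvature, in which case $N^2$ is diffeomorphic to $\Sphere^2$ or $\R^2$ and $M \cong \Sphere^2 \times \R$ or $\R^3$, up to the quotients allowed in the statement. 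Transporting the diffeomorphism produced in the first step then yields the stated conclusion for $(X, d_X)$.

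The chief technical hurdle is the second step: Lemma~\ref{seclemma2} must be sufficiently quantitative that $\eta_i(t)$ genuinely vanishes as $i \to \infty$, notwithstanding the $K^2/t$ scale of the full curvature tensor permitted by $(\mathrm{c}_t)$. Once this is in place, the first and third steps are routine assembly of Theorem~\ref{introthm} with the classical Hamilton--Shi classification results.
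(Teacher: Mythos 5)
Your proposal is correct and follows essentially the same route the paper intends: the corollary is stated as a consequence of Theorem~\ref{realmaintheo2} (= Theorem~\ref{introthm}) together with Lemma~\ref{seclemma2}, and that is exactly the pair you assemble. You also correctly identify the resolution of the apparent conflict with the $K^2/t$ curvature blow-up: in the conclusion of Lemma~\ref{seclemma2} the dangerous term enters only through $t\,\Sc(g(t))$, which $({\rm c}_t)$ keeps bounded by a constant, so the overall Ricci lower bound for $\upig(t)$ is $O(1/i)$ and vanishes in the Hamilton limit.

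One point worth being a little more careful about: Lemma~\ref{seclemma2} is applied to the flows $(M_i,\upig(t))$, and its hypotheses require globally bounded curvature on $M\times[0,T]$ and a \emph{global} initial Ricci bound $\geq -\ep_0/4$. In the case (b),(c) this is clean, since the $\upig(t)$ are Shi solutions with bounded curvature at all times and $\Ricci(\upig_0)\geq -1/i$ globally. In the case (b),$(\ti{\rm c})$, however, the flows of Theorem~\ref{maintheo2} are produced by first conformally modifying $\upig_0$ to a metric $g_{ij}$ with $\Ricci(g_{ij})\geq -c(n,k)$ (a bound that does \emph{not} improve as $i\to\infty$, since the modification lives outside $B_j$ and depends on the fixed concavity constant from $(\ti{\rm c}_1)$) and then taking a Hamilton limit in $j$; one cannot simply apply Lemma~\ref{seclemma2} to the limit flow $\upig(t)$ at $t=0$, because its curvature need only satisfy the $K^2/t$ bound there. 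Making the $\eta_i(t)\to 0$ claim rigorous in that case requires an additional localisation argument (e.g.\ via the local estimates of \cite{Si5}), which you, like the paper's own terse statement, leave implicit.
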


\vskip  0.07 true in
\section{Previous results}
\vskip  0.1 true in
We present here some previous results related to Ricci flow 
of non-smooth metrics.

In the paper \cite{Si}, the Ricci flow of continuous metrics is considered.
Estimates similar to those in Theorem \ref{intromaintheo2}  are proved.

In the paper \cite{ChTiZh}
K\"ahler Ricci flow of $L^{\infty}$ K\"ahler metrics is considered,
 estimates similar to
$(a)_t$ of Theorem \ref{intromaintheo2} are proved.

In the paper \cite{YaI} the Author considers the Ricci flow
of initial metrics which satisfy ( uniformly) an  $L^{n/2}$ 
bounds on the curvature, an $L^p$ bound on the Ricci curvature ($p> (n/2)$)
and a volume and diameter bound. He proves using Moser iteration, 
that estimates similar to
$(a)_t$ of Theorem \ref{intromaintheo2} hold under the Ricci
flow of such a metric. 

In the paper \cite{Ye}, the class of metrics
with $|\Ricci| \leq 1$ and conjugate radius bigger than $r_0$ is considered.
The authors prove estimates similar to $(a)_t$ of Theorem \ref{intromaintheo2}
once again using moser iteration.

In the paper \cite{Pe1}, the author proves an estimate of the form 
$(a)_t$ of Theorem \ref{intromaintheo2}, under
the assumtion that all neighbourhoods are alomost euclidean,
and the scalar curvature is bounded from below. Here, a blow up
argument is used, and an analysis of a backward evolving heat-type flow
(see also \cite{PeLu} and \cite{CTY} ).

In the paper  \cite{GuXu},
the author extends the results of Yang to the case that 
the manfiold is non-compact, and $\Ricci \geq -1$ and  an $L^p$ bound on the
curvature holds ($p > (n/2)$)
(see also \cite{MaLi}).

The case that the $L^{(n/2)}$ curvature is small locally, and
and an $L^p$ bound on the norm of the Ricci curvature exists, is considered
in the paper \cite{WaYu}.

The Ricci flow of compact manifolds with $\vol \geq 1$, 
$\diam \leq d_0$ and  $\Ricci \geq -\ep(d_0,n)$ 
$\ep(d_0,n) $ small is investigated  in \cite{Si4}.

\section{Methods and structure of this paper}

As explained in the introduction, we shall cheifly be concerned with
Riemannian manifolds $(M,g)$ which are contained
in $\curlT(3,k,m,v_0)$ or  $\curlTi(3,k,v_0)$, where these two spaces
are defined as follows.\begin{defi}\label{curlTdef} 
We say $(M,g) \in  \curlTi(n,k,v_0)$ if
$(M^n,g)$ is a smooth $n$-dimensional Riemannian manifold satisfying:
\begin{itemize}
\item[(a)]  $\Ricci(g) \geq k$
\item[(b)] $ \vol({\upg B}_1(x)) \geq v_0 > 0$ for all $x \in M$
\item[(c)] $\sup_M |\Riem(g)| < \infty$.
\end{itemize}
We say $(M,g) \in  \curlT(n,k,m,v_0)$ if
(a) and (b) are satisfied and the condtion (c) is replaced by
\begin{itemize}
\item[($\ti {\rm c}$)] 
Let $f:M \to \R$ be the exponential
function composed with its self $m$-times, and $\rho:M \to \R_0^+$
the distance function from a fixed base point $b$, $\rho(x) := \dist(x,b)$.
We assume that 
\begin{itemize}
\item[ (${\ti{\rm{c_1}}}$ )]
$\rho:M-B_R(b) \to \R$ is smooth for some $R>0$,
and
$\rho $ is $k$-concave there, that is
$$ \grad^2 \rho \leq k,$$ on $M-B_R(b)$ and
\item[ (${\ti{\rm{c_2}}}$)]
$$\lim_{r \to \infty} (\sup_{x \in B_r(b)}|\Riem(x)|/f(r))  = 0.$$
\end{itemize}

\end{itemize}
\end{defi}

Let us define 
$\bar \curlT(n,k,m,v_0)$ ( $\bar  \curlTi(n,k,v_0)$ )
as the set of metric spaces $(X,d_X)$ which arise as
the Gromov-Hausdorff limit of sequences whose elements are
contained in $\curlT(n,k,m,v_0)$  ( $\curlTi(n,k,v_0)$ ).
Elements of $\bar \curlT(n,k,m,v_0)$ ( $\bar  \curlTi(n,k,v_0)$ ) can be very irregular, 
and are not a priori manifolds (as we saw in the two examples of the 
introduction).
Nevertheless, they will be length spaces and do carry
some structure.
In the first part of the paper we concern ourselves only with $\bar  \curlTi(3,k,v_0)$.
Assume $(X,d_X) \in \bar \curlTi(3,k,v_0)$ is given by $(X,d_X) = \GHlim_{i \to \infty} (M^3_i,d(g_i))$
for $(M_i,g_i) \in  \curlTi(3,k,v_0)$.
In order to define a Ricci flow of $(X,d_X)$ 
we will flow each of the $(M^3_i,g_i)$ and then take a Hamilton limit of the
solutions (see \cite{Ha95a}).
The two main obstacles to this procedure are:
\begin{itemize}
\item It is possible that the solutions $(M_i,g_i(t))$  are defined only for $t \in [0,T_i)$ where $T_i \to 0$ as $i \to
\infty$.
\item In order to take this limit, we require that each of the 
solutions satisfy uniform bounds of the form
$$\sup_{M_i} |\Riem(g_i(t))| \leq |c(t)|, \ \forall \ t \in (0,T) ,$$
for some well defined common time interval 
$(0,T)$  and some function $c:(0,T) \to \R$ where
$\sup_{[R,S]} |c| < \infty$ for all $[R,S] \subset (0,T)$ 
($c(t) \to \infty$ as $t \to 0$ is allowed). Furthermore they should all
satisfy a uniform lower bound on the injectivity radius of the form
$$\inj(M,g_i(t_0)) \geq \si_0 > 0 $$ for some $t_0 \in (0,T).$
\end{itemize}
As a first step to solving these two problems,  
in Lemma \ref{globaltimelemma} of Section \ref{chapbound} we see that a (three dimensional) smooth 
solution to the Ricci flow $(M,g(t))_{t \in [0,T)}$ such that
$(M,g(t)) \in \curlTi(3,k,v_0)$ for all $t \in [0,T)$ and $\sup_{M \times[0,S]} |Riem|< \infty$ for all $S <T$
cannot become singular at time $T$.
Furthermore, a bound of the form
\begin{eqnarray}
| \Riem(g(t))|  \leq {c_0(k,v_0) \on t} \forall t \in [0,T) \cap [0,1] \label{nicey}
\end{eqnarray}
for such solutions is proved: that is, the curvature of such solutions is quickly smoothed out.

In Section 5 we prove an a priori estimate on the rate  (Lemma \ref{seclemma2}) at which the infimum
of the Ricci curvature of a solution to the Ricci flow with bounded curvature can decrease.
Note: this lemma is a non-compact version of Lemma 5.1 from \cite{Si4}.

\begin{lemm}
Let $g_0$ be a smooth metric on a 3-dimensional non-compact 
manifold $M^3$ satisfying 
\begin{eqnarray*}
&& \Ricci(g_0) \geq - { \ep_0 \on 4} g_0, \cr
&& (\sec(g_0)  \geq - { \ep_0 \on 4} g_0 )
\end{eqnarray*}
for some $0 < \ep_0 < {1 \on 100}$,  and 
let $(M,g(\cdot,t))_{t \in [0,T)}$ be a smooth solution to Ricci flow with
bounded curvature at all times.
Then  
\begin{eqnarray}
\Ricci(g(t)) & \geq -\ep_0(1 + kt) g(t) - \ep_0(1 + kt) t \Sc(g(t)) g(t), \fall t \in [0,T) \cap [0,T'), \cr
 (\sec(g(t)) & \geq  -\ep_0({1 \on 2} + kt) g(t) - \ep_0 ({1 \on 2} + kt) t \Sc(g(t)) g(t), \fall t \in [0,T)\cap [0,T'), ) 
\cr \label{nicey2}
\end{eqnarray}
where $k = 100$ and $T'=T'(100) > 0$ is a universal constant.
\end{lemm}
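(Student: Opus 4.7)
The plan is to apply a tensor maximum principle to
\[
Z_{ij}(t) := R_{ij}(t) + \ep_0(1+kt)g_{ij}(t) + \ep_0(1+kt)t\,\Sc(g(t))g_{ij}(t),
\]
and to show that $Z \geq 0$ as a symmetric $2$-tensor for $t$ in some short universal interval $[0,T')$. Note that $Z(0) = \Ricci(g_0) + \ep_0 g_0 \geq (\ep_0 - \ep_0/4)g_0 > 0$ by the initial hypothesis, so there is genuine room to push positivity forward by a uniform time.

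First I would write down the evolution equations. In dimension $3$ the Riemann tensor is algebraically determined by Ricci, so under Ricci flow
\[
\partt R_{ij} = \lap R_{ij} + Q_{ij}(\Ricci), \qquad \partt \Sc = \lap \Sc + 2|\Ricci|^2,
\]
with $Q_{ij}$ a quadratic expression in $R_{ij}$ and $\Sc$. Combining with $\partt g_{ij} = -2 R_{ij}$ gives
\[
(\partt - \lap)Z_{ij} = Q_{ij}(\Ricci) + \ep_0 k\, g_{ij} + \ep_0(1+kt)(-2R_{ij}) + \psi'(t)\Sc\, g_{ij} + 2\psi(t)|\Ricci|^2 g_{ij} - 2\psi(t)\Sc\, R_{ij},
\]
where $\psi(t)=\ep_0(1+kt)t$. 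Now suppose at some first time $t_0 \in (0,T')$ and some point $p_0$ there is a null eigenvector $v$ of $Z$, so that the smallest eigenvalue of $R_{ij}$ at $(p_0,t_0)$ equals $-\ep_0(1+kt_0)(1 + t_0\Sc)$. Evaluating $Q_{ij}(\Ricci)v^i v^j$ using that $\ep_0 < 1/100$, the bad eigenvalue is $O(\ep_0)$ small relative to $\Sc$, and standard algebraic identities for $Q$ in dimension $3$ (as in Lemma 5.1 of \cite{Si4}) show
\[
\bigl[Q_{ij}(\Ricci) + 2\psi(t)|\Ricci|^2 g_{ij} - 2\psi(t)\Sc R_{ij} - 2\ep_0(1+kt) R_{ij}\bigr] v^i v^j \geq -C\ep_0(1+t|\Sc|)
\]
for a universal $C$. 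Taking $k=100$ makes the positive forcing $\ep_0 k$ strictly dominate this error for $t_0$ less than a universal $T'>0$, contradicting the definition of $t_0$ and yielding $(\partt - \lap)Z(v,v) > 0$ at $(p_0,t_0)$.

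The main obstacle is that the standard tensor maximum principle requires compactness, whereas here $M$ is only complete with bounded curvature on compact time subintervals. To handle this I would replace $Z$ by the perturbed tensor $Z^{\eta}_{ij} := Z_{ij} + \eta e^{At}(1+t)g_{ij}$ for $\eta>0$ and $A$ large depending on the (finite) bound $\sup_{M\times[0,S]}|\Riem|$ for fixed $S<T$. The extra term $\eta A e^{At}(1+t)g_{ij}$ appears on the left-hand side of the evolution inequality and strictly beats any fixed perturbation error; meanwhile the weight $e^{At}(1+t)$ guarantees that if $Z^{\eta}$ ever fails to be positive definite, the first failure must occur at an interior spatial minimum. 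This is the standard application of Shi's maximum principle for complete manifolds with bounded curvature \cite{Sh2}, or equivalently an Omori--Yau argument. The pointwise contradiction from the previous paragraph then applies at this first failure time for $Z^{\eta}$, and letting $\eta \searrow 0$ delivers $Z \geq 0$ on $M \times ([0,T)\cap [0,T'))$.

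The parenthetical sectional curvature version follows by the identical argument with the curvature operator $\Rb$ in place of $\Ricci$: in dimension $3$ the curvature operator evolves by the same type of reaction--diffusion equation with a quadratic ODE that admits the analogous algebraic inequality, and the numerical factor $\tfrac12$ in the statement corresponds to the different relationship between $\Sc$ and the eigenvalues of $\Rb$.
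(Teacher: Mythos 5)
Your overall strategy matches the paper's: define the tensor $Z_{ij}=R_{ij}+\ep_0(1+kt)g_{ij}+\ep_0(1+kt)t\,\Sc\,g_{ij}$ (which is exactly the paper's $L_{ij}$ minus the barrier term), compute its evolution, analyze the reaction term via the $3$-dimensional algebraic identities for $Q$ as in Lemma~5.1 of \cite{Si4}, and run the same argument at the level of the curvature operator for the parenthetical statement. The algebraic part is plausible and is essentially what the paper does.

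The genuine gap is in the non-compact maximum principle step. Your perturbation $Z^\eta_{ij}=Z_{ij}+\eta e^{At}(1+t)g_{ij}$ is \emph{spatially constant}: at a fixed time it merely shifts every eigenvalue of $Z/g$ by the same positive number $\eta e^{At}(1+t)$. It therefore does nothing to localize where failure of positivity might occur, and the claim that ``the weight $e^{At}(1+t)$ guarantees that if $Z^\eta$ ever fails to be positive definite, the first failure must occur at an interior spatial minimum'' is simply false. On a non-compact manifold the infimum of the smallest eigenvalue of $Z^\eta/g$ over $M$ at the critical time need not be attained at any point; the set $\{x: Z^\eta(x)\ \text{has a null direction}\}$ can be non-compact, so there is no $(p_0,t_0)$ at which to run the pointwise contradiction. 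A time-only barrier buys strictness at a putative interior point but cannot manufacture the interior point.

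What actually closes this gap (and is what the paper does) is to add a barrier that grows in \emph{space}: the paper uses $\si f\,g_{ij}$ with $f=e^{\rho^2(1+at)+at}$, $\rho(x,t)=\dist_{g(t)}(x_0,x)$, and $a\sim\sup_{M\times[0,T]}|\Riem|$. Because $f\to\infty$ super-exponentially as $\rho\to\infty$ while $Z$ is bounded (curvature bounded), the set where $L=Z+\si f g\le 0$ is compact, so a first failure point $(p_0,t_0)$ genuinely exists. The price one pays is that $\rho$ is not smooth on the cut locus, so the barrier term must be handled with Calabi's trick (replacing $\rho$ by $\ti\rho(x)=\rho(\gamma(r))+\dist(\gamma(r),x)$ near the failure point) together with the Laplacian/Hessian comparison estimates to control $\lap f$ and $\partt f$ there. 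None of this appears in your proposal, and it is precisely the non-trivial content needed to make the ``first time and point'' argument legitimate in the complete non-compact setting; referencing Shi's or Omori--Yau-type maximum principles as a black box does not rescue the specific mechanism you describe, which is different from (and weaker than) what those results actually prove.
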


One of the major applications of this Lemma is:
any solution $(M,g(\cdot,t))_{t \in [0,T)}$ in $\curlTi(3,k,v_0)$ which has bounded curvature at all times
and satisfies $\Ricci(g_0) \geq -\ep_0$ 
at time zero, must also satisfy $\Sc(g(t)) \leq {c_0 \on t}$ (from \eqref{nicey}) and hence
from \eqref{nicey2}
$$ \Ricci(g(t)) \geq - 2 c_0\ep_0 \forall t \in (0,T') \cap (0,T) \cap (0,1) $$

In Section 6, we consider smooth solutions to the Ricci flow which satisfy
\begin{eqnarray}
&&\Ricci(g(t)) \geq - c_0,  \label{intro1} \cr
&& |\Riem(g(t))| t \leq c_0. \label{intro2}  \cr
\end{eqnarray}
In Lemma \ref{diamlemm}, well known bounds on the evolving distance for a solution to the Ricci flow are proved
for such solutions.

We combine this Lemma with some results on Gromov-Hausdorff convergence and a theorem of Cheeger-Colding
(from the paper \cite{ChCo}) to show (Corollary \ref{volcorr})
that such solutions can only lose volume at a controlled rate.

The results of the previous sections are then used to prove a theorem ( section 6 ) which
tells us how a priori the Ricci flow of an element $(M,g_0) \in \curlTi(3,k,v_0)$ behaves:  
see Theorem \ref{maintheo}.

In Section 8 we show that any $(M,g) \in \curlT(n,k,m,v_0)$
can be approximated in the GH sense by manifolds $(M,g_i) \in \curlTi(n,k,\ti v_0)$, $i \in \N$. 
More precisely, we show that there exists $\ti v_0 = \ti v_0(n,k,m,v_0) >0$
and $(M_i,g_i) \in  \curlTi(n,k,\ti v_0)$ with
$$(g_i)|_{B_i(x_0)}  = g|_{B_i(x_0)}$$ such that $(M,d(g_i)) \to (M,d(g))$ in the Gromov-Hausdorff sense as $i \to \infty$.
This section is independent of the rest of the paper, and requires no knowledge of the Ricci flow.

Finally, using the results of the previous two sections, we show that
a solution to the Ricci flow of $(X,d_X)$ exists, 
where  $(X,d_X)$ is the Gromov-Hausdorff limit
as $i \to \infty$ of $(M_i, d(g_i))$ where the 
$(M_i, g_i)$ are in $ \curlT(3,k,m,v_0)$, and that 
this solution satisfies certain a priori estimates.
See Theorem \ref{realmaintheo2}.

Appendix A contains some Hessian comparison principles and the proofs
thereof.
Appendix B contains a result on the rate at which distance changes under 
Ricci flow if the solution satisfies $|\Riem| \leq c/t$.

\section{ Bounding the blow up time from below using bounds on the geometry.}\label{chapbound}
\setcounter{equation}{0}
\setcounter{defi}{0}
 \numberwithin{defi}{section} 
\numberwithin{equation}{section}

An important property of the Ricci flow is that: \hfill\break
if certain geometrical quantities are controlled (bounded) on a half open finite time interval $[0,T)$, then
the solution does not become singular as $t \upto T$ and may be extended to a solution defined on the time interval
$[0,T + \ep)$ for some $\ep >0.$
As in the paper \cite{Si4}, we are interested in the question:

\begin{prob}
What elements of the geometry need to be controlled, in order to guarantee that a solution does not become singular?
\end{prob}

In \cite{Sh}, it was shown that
for $(M,g_0)$ a smooth non-compact Riemannian manifold with
$\sup_M |\Riem(g_0)| < \infty$, the Ricci flow equation
\begin{eqnarray}
&&\partt g = - 2 \Ricci(g), \label{Ricci}\cr
&&g(\cdot,0) = g_0,\nonum
\end{eqnarray}
has a short time solution
$(M,g(t))_{t \in [0,T)}$
for some $T = T(k_0,n)$ satisfying
\begin{eqnarray}
\sup_M |\Riem(g(t))| < \infty \ \forall t \in [0,T)\nonum
\end{eqnarray}
(the compact case was proved by Hamilton
in \cite{Ha82}). 
Using Shi's solution (Theorem 1.1 of \cite{Sh}), we can find a solution
$(M,g(t))_{t \in [0,T)}$
satisfying
\begin{equation}\label{max1}
\begin{cases}
\, &\sup_M |\Riem(g(t))| < \infty \ \forall \ t \in [0,T), \\
\, & \lim_{t \to T} \sup_M |\Riem(g(t))| =  \infty 
\end{cases}
\end{equation}
or
\begin{equation}\label{max2}
\begin{cases}
\, &T =  \infty,   \\
\,   &  \sup_M |\Riem(g(t))| < \infty \ \forall \ t \in [0,\infty). 
\end{cases}
\end{equation}

\begin{defi}\label{maxibc}
A solution $(M,g(t))_{t \in [0,T)}$ to Ricci flow which satisfies either \eqref{max1}
or \eqref{max2} is called a {\it maximal} solution with bounded curvature (or 
{\it maximal with BC}). 
\end{defi}
It was also shown in Shi \cite{Sh} that if $(M,g(t))_{t \in [0,T)}$ is a smooth solution with $T< \infty$ and
$\sup_{M\timess [0,T)} |\Riem| < \infty$, then 
there exists an $\ep >0$ and a solution $(M,h(t))_{t \in [0,T + \ep)}$, with $h|_{[0,T)} = g|_{[0,T)}$

So we see that a bound on the supremum of the Riemannian curvature on $M \times [0,T)$  
(that is, {\it control}
of this geometrical quantity) 
guarantees that this solution does not become singular as $t \upto T$, and that it may be
extended past time $T$ (where we are assuming here that $T< \infty$).
In the following lemma, we present 
other bounds on geometrical quantities which guarantee that a solution to the Ricci flow does
not become singular as $t \upto T$ (once again, $T< \infty$ is being assumed here).

\begin{lemm}\label{globaltimelemma}

Let $(M^{3(n)},g(t))_{t \in [0,T)}$, $T \leq 1$ be an arbitrary smooth complete
solution to Ricci flow   satisfying the conditions  
\begin{itemize}
\item[(i)]
$\Ricci(g(t))  \geq -k^2 \ \ (\curlR(g(t)) \geq -k^2),$
\item[(ii)] 
${\vol(B_1(x,t))} \geq  v_0 > 0$ for all $x \in M$,
\item[(iii)]
$\sup_M |\Riem(g(t))| < \infty$,  
\end{itemize}
 for all $t \in [0,T)$ ( notation: $\curlR$ refers to the curvature operator). 
Then there exists a $c_0 = c_0(v_0,k)$ ($c_0 =c_0(v_0,k,n)$)
such that
\begin{eqnarray*}
\sup_M |\Riem(g(t))| t \leq c_0,
\end{eqnarray*}
for all $t \in [0,T).$
In particular,  $(M^{3(n)},g(t))_{t \in [0,T)}$ is not maximal with BC.
\end{lemm}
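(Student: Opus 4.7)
The plan is to argue by contradiction using parabolic rescaling, point-picking, and Hamilton compactness. Suppose no such $c_0(v_0,k)$ (or $c_0(v_0,k,n)$) exists. Then there is a sequence of times $t_i \in (0,T)$ and points $x_i \in M$ with $Q_i t_i \to \infty$, where $Q_i := |\Riem(g(t_i))|(x_i)$. Applying the standard Hamilton-Perelman point-picking procedure inside the parabolic region $M \times [0,t_i]$, I can replace this sequence by one (still denoted $(x_i,t_i)$) satisfying, in addition, that $Q_i\to\infty$, $Q_i t_i \to \infty$, and $|\Riem(g(s))|(y) \leq 4 Q_i$ on the backward parabolic neighborhood $B_g(x_i, A_i Q_i^{-1/2}, t_i) \times [t_i - A_i Q_i^{-1}, t_i]$, for some $A_i \to \infty$.

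Next I parabolically rescale: set $g_i(s) := Q_i \, g(t_i + s/Q_i)$ for $s \in [-A_i, 0]$. By construction, $|\Riem(g_i)| \leq 4$ on $B_{g_i}(x_i, A_i, 0) \times [-A_i, 0]$ and $|\Riem(g_i)|(x_i, 0) = 1$. The lower curvature hypothesis (i) rescales to $\Ricci(g_i) \geq -k^2/Q_i$ (resp.\ $\curlR(g_i) \geq -k^2/Q_i$), which tends to $0$. Bishop-Gromov applied to $(M, g(t_i))$ at scales $r \in (0, 1]$, combined with (ii), yields $\vol_g(B_g(x, r, t_i)) \geq c(k) v_0 r^n$ for all such $r$ and all $x$, which upon rescaling gives $\vol_{g_i}(B_{g_i}(x, 1, 0)) \geq c(k) v_0 > 0$ uniformly. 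This volume non-collapsing together with $|\Riem(g_i)|\leq 4$ produces via Cheeger-Gromov-Taylor a uniform positive lower bound $\inj(x_i, 0, g_i) \geq i_0(k, v_0)$. Hamilton's compactness theorem for pointed Ricci flows with locally bounded curvature now extracts (after passing to a subsequence) a smooth pointed limit $(M_\infty, g_\infty(s), x_\infty)$, defined for $s \in (-\infty, 0]$, which is a complete ancient Ricci flow satisfying $|\Riem(g_\infty)|\leq 4$, $|\Riem(g_\infty)|(x_\infty, 0) = 1$ (so in particular $g_\infty$ is not flat), $\vol_{g_\infty}(B_1(x,s)) \geq c(k)v_0$ for all $(x,s)$, and $\Ricci(g_\infty) \geq 0$ (resp.\ $\curlR(g_\infty) \geq 0$) on all of $M_\infty \times (-\infty, 0]$.

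The endgame consists in deriving a contradiction from the existence of this non-flat, non-collapsed, bounded-curvature ancient solution. In dimension three the first step is to upgrade $\Ricci(g_\infty) \geq 0$ to $\curlR(g_\infty) \geq 0$: Hamilton-Ivey pinching applied to the original flow gives that the most negative sectional curvature is $o(\Sc)$ as $\Sc \to \infty$, and since $Q_i\to\infty$ this forces the rescaled most-negative eigenvalue of $\curlR$ to vanish in the limit. Thus in either case the blow-up limit has $\curlR(g_\infty)\geq 0$. Now I combine Hamilton's strong maximum principle for the curvature operator (which forces either $\curlR(g_\infty) > 0$ or an isometric splitting of the universal cover into a lower dimensional factor with strictly positive curvature operator) with the classification of complete non-collapsed bounded-curvature ancient solutions to Ricci flow with non-negative curvature operator (i.e.\ Perelman's theory of $\kappa$-solutions in dimension three). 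The various possibilities, cylindrical quotients $S^2 \times \R$ or $S^3/\Gamma$, and Bryant-type gradient steady solitons, are all ruled out by combining the uniform non-collapsing $\vol_{g_\infty}(B_1(x,s))\geq c(k)v_0$ with the parabolic-scale curvature normalization $|\Riem(g_\infty)|(x_\infty,0)=1$, eventually forcing the limit to be flat, contradicting non-flatness.

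I expect the main obstacle to be precisely the final rigidity step: getting $\curlR\geq 0$ and bounded curvature and non-collapsing is routine, but squeezing a contradiction out of the classification list requires care, because the cylindrical limit $\R\times S^2$ is a priori compatible with uniform volume non-collapsing at scale one. The cleanest route will be to trace the scales: the curvature normalization pins the extrinsic radius of any $S^2$-factor, and the volume lower bound $c(k)v_0$ then forces an amount of transverse volume incompatible with any of Perelman's model solutions once one remembers that the whole limit lies in $(-\infty, 0]$ with $|\Riem|\leq 4$ globally. Carrying this comparison out carefully, and recording the resulting $c_0(v_0,k)$, completes the proof.
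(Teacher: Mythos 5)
Your blow-up framework --- contradiction, rescaling at curvature scale, Bishop--Gromov for volume ratios, Cheeger--Gromov--Taylor injectivity radius bound, Hamilton compactness, and the Hamilton--Ivey upgrade from $\Ricci\geq 0$ to $\curlR\geq 0$ for the $n=3$ limit --- coincides with the paper's proof up to the point where you pass to the ancient limit. The genuine gap is in your endgame, and you have (correctly) flagged it yourself: ruling out the cylinder $\R\times\Sphere^2$ from scale-one non-collapsing alone does not work, since a standard shrinking cylinder on $(-\infty,0]$ with $|\Riem|\leq 4$ \emph{is} non-collapsed at scale one. Your proposed ``trace the scales'' step, which is supposed to extract a contradiction from a classification of three-dimensional $\kappa$-solutions, is not an argument but a hope; moreover it would be inherently dimension-specific, whereas the lemma is also stated for general $n$ under $\curlR\geq -k^2$.

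The fix is already implicit in your own Bishop--Gromov computation, and it is exactly what the paper does: the estimate $\vol_g(B_r(x,t_i))\geq c(k)v_0\,r^n$ holds for all $r\leq 1$, and under the rescaling $g_i=Q_i\,g$ this becomes $\vol_{g_i}(B_\rho(x,0))\geq c(k)v_0\,\rho^n$ for all $\rho\leq\sqrt{Q_i}$, not merely $\rho=1$. Since $Q_i\to\infty$, the limit $(M_\infty,g_\infty)$ inherits $\vol_{g_\infty}(B_\rho(x,0))/\rho^n\geq c(k)v_0$ for \emph{all} $\rho>0$, i.e.\ strictly positive asymptotic volume ratio. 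Proposition 11.4 of Perelman's first paper states that a non-flat ancient $\kappa$-solution with bounded nonnegative curvature operator has asymptotic volume ratio zero (in any dimension), so one concludes immediately that $g_\infty$ must be flat, contradicting $|\Riem(g_\infty)|(x_\infty,0)=1$. No classification of $\kappa$-solutions is needed, the argument works uniformly in $n$, and the delicate scale-tracing you were worried about disappears. Record the all-scale volume ratio bound instead of discarding it, cite Perelman 11.4, and the proof closes.
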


\begin{proof}

Assume to the contrary that there exist solutions 
$(M_i, {\upig(t)})_{t \in [0,T_i)}$, $T_i \leq 1$ to Ricci flow
satisfying the conditions  (i),(ii),(iii) and such that
 \begin{eqnarray}
 \sup_{(x,t) \in M_i\timess (0,T_i)} |\Riem(\upi g)|(x,t)t  \itoinfty \infty,  \nonum
\end{eqnarray} 
or there exists some $j \in \N$ with
\begin{eqnarray}
 \sup_{(x,t) \in M_j\timess (0,T_j)} |{\Riem(\up{j}g) }|(x,t)t  =  \infty.  \nonum
\end{eqnarray} 

It is then possible to choose points $(p_i,t_i) \in M_i\timess [0,T_i)$ 
(or in  $M_j \timess [0,T_j)$: in this case we
redefine $M_i =M_j$ and $T_i = T_j$ for all $i \in \N$ and hence we do not need to treat this
case separately ) such that
\begin{eqnarray}
|\Riem(\upi g)|(p_i,t_i)t_i  
= -\ep_i + \sup_{(x,t) \in M_i\timess (0,t_i]} |\Riem(\upi g)|(x,t)t  \to \infty \nonum
\end{eqnarray}
as $i \to \infty$ where $\ep_i \to 0$ as $i \to \infty$.
Define
\begin{eqnarray}
  { \upghi}(\cdot,\hat t) := c_i {{\upig}}(\cdot,t_i +{\hat t \on c_i}),\nonum
\end{eqnarray}
 where
$c_i :=  |\Riem(\upi g)|(p_i,t_i).$
This solution to Ricci flow is defined for $ 0 \leq t_i +{\hat t \on c_i}< T_i,$
that is, at least for $0 \geq \hat t > -t_i c_i=: A_i.$
Then the solution  ${ \upghi}(\hat t)$ is defined at least for
$  \hat t \in (-A_i, 0 ).$
By the choice of $(p_i,t_i)$ we see that the solution is defined
for $\hat t > -A_i =  -t_i c_i =  -t_i|\Riem(\upi g)|(p_i,t_i) \itoinfty - \infty.$
Since $t_i \leq T_i  \leq 1,$ we also have 
\begin{eqnarray} 
c_i \itoinfty \infty , \label{cieqn}
\end{eqnarray}
in view of the fact that
 $$ t_i c_i= t_i |\Riem(\upi g)|(p_i,t_i) \itoinfty \infty .$$
Fix a constant $A \in (-A_i,0]$. For any $\hat t$ with 
$  - A_i  <  A < \hat t \leq 0$
define $s(\hat t,i) := t_i +{\hat t \on c_i}.$
Then for all such $\hat t$ we have
\begin{eqnarray}
|\Riem(\upghi)|(\cdot, \hat t) &=& {1 \on c_i} |\Riem(\upgi)|( \cdot,s(\hat t ,i)) \cr
&=&   \frac{|\Riem(\upi g)|( \cdot,s(\hat t ,i))}{|\Riem(\upi g)|(p_i,t_i)}      \cr
&=&     \frac{s |\Riem(\upi g)|( \cdot,s)}{t_i|\Riem(\upi g)|(p_i,t_i)}\frac{t_i}{s}    \cr
& \leq &  (1 + \ep_i) \frac {t_i}{s}  \cr
&=&  (1 + \ep_i){t_i \on  t_i + {\hat t \on c_i} } \cr
& \leq & (1 + \ep_i){t_i \on  t_i + {A \on c_i} } \itoinfty 1 \nonum  \\ \label{fidely}
\end{eqnarray}
in view of the definition of $(p_i,t_i)$, and $0 \leq s \leq t_i$
(follows from the definition of $s$ and the fact that $\hat t \leq 0$), and
\eqref{cieqn}.
Since  $\vol(B_1(p),\upig(t))\geq v_0> 0$ 
and $\Ricci \geq  -k^2  $ ($\curlR \geq -n^2k^2$) 
(in the case $n=3$ this is true by assumption, in the
general case it is true as all sectional curvatures are not less then $-k^2$)
we have
\begin{eqnarray}
\infty > l(n,v_0) \geq {\vol(B_r(p),\upig(t)) \on r^n} \geq \tvz(n,v_0)>0 \ \forall \ 1 >r>0 \nonum
\end{eqnarray} 
(in view of the Bishop Gromov comparison principle)
which implies the same result (for radii scaled appropriately)
for the rescaling of the manifolds: 
\begin{eqnarray}
l(n,v_0) \geq {\vol(B_r(p), \upghi(t)) \on r^n}\geq  \tvz(n,v_0) \ \forall \ \sqrt{c_i}>r>0 \label{volli}
\end{eqnarray} 

Now using 
\begin{eqnarray}
l \geq {\vol(B_r(p),\upghi(t) ) \on r^n} \geq  \tvz, \forall 0< r <1, \label{volinit}
\end{eqnarray}
we obtain a bound on the injectivity radius from below, in view
of the theorem of Cheeger/Gromov/Taylor, \cite{CGT}
$\Big($the theorem of Cheeger/Gromov/Taylor says that for a complete Riemannian manifold  $(M,g)$ with
$|\Riem| \leq 1$, we have
$$ \inj(x,g) \geq r { \vol(g, B_r(x)) \on   \vol(g, B_r(x)) + \omega_n\exp^{n-1} },$$
for all $r \leq {\pi \on 4}$:
in particular, using that $\diam(M,g) \geq N $ ,$N$ as large as we like, and $|\Riem| \leq 2$ 
for the Riemannian manifolds in question, we obtain
\begin{eqnarray}
 \inj(x,g) \geq \tvz {s^{n+1} \on l s^n +  \omega_n\exp^{n-1}} \geq
  c^2(\tvz,n) > 0 \nonum 
\end{eqnarray}
for $s = \min( (\omega_n \exp^{n-1})^{1 \on n}, {\pi \on 4} )$ $\Big)$.

This allows us to take a pointed {\it Hamilton limit} (see \cite{Ha95a}), 
which leads to a Ricci flow solution
$(\Omega,o, g(t)_{t \in (- \infty,\omega) }),$
with $|\Riem(g(t))| \leq |\Riem(o,0)| = 1,$ and $\Ricci \geq 0$ ($\curlR \geq 0$),  $\omega \geq 0$

In fact the limit solution satisfies $\curlR \geq 0$ for $n=3$ also, see Corollary 9.8 in \cite{ChKn}.

The volume ratio estimates 
\begin{eqnarray}
l \geq {\vol(B_r(p)) \on r^3} \geq \tvz \forall r >0, \label{vol}
\end{eqnarray}
 are also valid for $(\Omega,g),$
in view of \eqref{volli}.

We now apply Proposition 11.4 of \cite{Pe1} to obtain a contradiction.

\end{proof}

\section{Bounds on the Ricci curvature from below under Ricci flow in three dimensions}
\setcounter{equation}{0}                                        
The results of this section are only valid in dimensions two and three.

We prove a quantitative estimate that tells us how quickly the Ricci curvature can decrease, if we assume
at time zero that the Ricci curvature is not less than $-1$ and that the
supremum of the curvature of the evolving metric is less than infinity.
This involves modifying the argument from \cite{Si4} to the case that $M$ is non-compact. This result has similarities to the estimate of 
Hamilton-Ivey (see \cite{Ha95} or \cite{Iv} for a proof of the
Hamilton-Ivey estimate, which was
independently obtained by R.Hamilton and T.Ivey). 
For a general heat type equation on a non-compact manifold  $f:M \times [0,T] \to \R$
\begin{eqnarray*}
 &&\partt f  = \lap_{g(t)} f + a f + g(V ,\grad f)\cr
&&f(\cdot,0)  = f_0 \geq 0 ,
\end{eqnarray*}
it is well known that the maximum principle does not hold for general solutions
$f$, and for general $V$ and $a$. 
In the case that $a$ and $V$ are bounded, there are a number of maximum principles which
can be applied as long as the growth of $f$ is controlled, and the evolving metric
$g$ satisfies certain conditions (for example $|\partt g| \leq c $ ): see for example \cite{EcHu},
\cite{NiTa2}.
In the case of Tensors, there are also a number of Theorems which present conditions
which guarantee that the Tensor Maximum principle of Hamilton holds in a non-compact setting:
see for example Theorem 2.1 in \cite{NiTa} and Theorem 7.1 of \cite{Si}.  

In the proof of the lemma below we construct a tensor $L$ which satisfies
$\partt L \geq \lap L + N$ where $L(\cdot,0) \geq 0$ and $L(x,t) \geq \ep >0$ for all $x$ far away from
an origin, and $N(x_0,t_0)(v,v) \geq 0$ for all $v$ which satisfy $L(x_0,t_0)(v,v) = 0.$ 
This allows us to argue exactly as in the proof of the Tensor maximum principle for compact
manifolds (proved by R.Hamilton in \cite{Ha82}) to conclude that $L \geq 0$ everywhere if $L \geq 0$ at $t = 0$.

\begin{lemm}\label{seclemma2}
Let $g_0$ be a smooth metric on a 3-dimensional (or 2-dimensional) manifold $M^{3(2)}$ satisfying
$\sup_M |\Riem(g_0)| < \infty,$ 
and
 \begin{eqnarray*}
&& \Ricci(g_0) \geq - { \ep_0 \on 4} g_0   \ \  
( \sec(g_0)  \geq - { \ep_0 \on 4} g_0 )
\end{eqnarray*}
for some $0 < \ep_0 < {1 \on 100}$. 
Let $(M,g(\cdot,t))_{t \in [0,T]}$ be a solution to Ricci flow with $g(0) = g_0(\cdot)$ and
$\sup_{M \times [0,T]} | \Riem(g(t)) | < \infty.$
Then  \begin{eqnarray*}
\Ricci(g(t)) & \geq -\ep_0(1 + kt) g(t) - \ep_0(1 + kt) t \Sc(g(t)) g(t), \cr
 (\sec(g(t)) & \geq  -\ep_0({1 \on 2} + kt) g(t) - \ep_0 ({1 \on 2} + kt) t \Sc(g(t)) g(t)) ,
\end{eqnarray*}
for all  $t \in [0,T)\cap [0,T')$
where $k = 100$ and $T'> 0$ is a universal constant.
\end{lemm}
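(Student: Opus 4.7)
The plan is to adapt the tensor maximum principle argument of Lemma 5.1 in \cite{Si4}, which is stated there for compact $M$, to the non-compact situation by introducing a growing barrier. Define the symmetric 2-tensor
$$L(t) := \Ricci(g(t)) + \ep_0(1+kt)\bigl(1 + t\, \Sc(g(t))\bigr)\, g(t),$$
so that the Ricci statement of the lemma reads $L \geq 0$ on $M \times ([0,T) \cap [0,T'))$. At $t=0$ the hypothesis $\Ricci(g_0) \geq -(\ep_0/4) g_0$ gives $L(0) \geq (3\ep_0/4) g_0 > 0$.

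First one computes $(\partt - \lap)L$ under Ricci flow, using the standard evolution equations of $\Ricci$, $\Sc$ and $g$. In dimension three the full Riemann tensor is algebraically determined by the Ricci tensor, so the reaction term $Q(\Ricci)$ is a polynomial in the eigenvalues of $\Ricci$. A direct eigenvalue analysis, essentially identical to the compact case of \cite{Si4}, then shows that at any space-time point $(x_0,t_0)$ and any unit vector $v$ with $L(x_0,t_0)(v,v)=0$, the pointwise inequality $(\partt - \lap)L(v,v) \geq 0$ holds, provided $k = 100$ and $t \leq T'$ for a universal $T' > 0$. The correction $\ep_0(1+kt) t \Sc\, g$ is precisely what is needed to absorb the worst Hamilton-Ivey-type contribution of the quadratic reaction term.

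To run the maximum principle on the non-compact manifold, fix a base point $b \in M$ and choose a smooth function $\phi: M \to [1,\infty)$ with $\phi(x) \to \infty$ as $\dist(x,b) \to \infty$ and with $|\grad \phi|_{g_0} + |\grad^2 \phi|_{g_0} \leq C_0$ (e.g.\ a standard smoothing of $\sqrt{1 + \dist_{g_0}(x,b)^2}$). The assumed bound $\sup_{M \times [0,T]} |\Riem(g(t))| < \infty$ makes the metrics $g(t)$ uniformly comparable to $g_0$ on $[0,T]$, so analogous pointwise bounds for $\phi$ hold in each $g(t)$ with a constant depending only on $C_0$ and $\sup|\Riem|$. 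For $\de > 0$ and a large constant $\La = \La(C_0, \sup|\Riem|, \ep_0)$ set
$$L_\de(t) := L(t) + \de\, e^{\La t}\, \phi\, g(t).$$
Choosing $\La$ large enough that the extra evolution contribution $\de e^{\La t}(\La \phi - \lap \phi + \text{lower order})\, g$ is strictly positive everywhere, the inequality of the previous paragraph becomes strict for $L_\de$ at any null direction. Since $L$ itself is bounded (by the curvature hypothesis) while $\de e^{\La t}\phi \to \infty$ at infinity, $L_\de$ is strictly positive definite outside some compact subset of $M$, uniformly for $t \in [0,T']$.

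Therefore any first failure of $L_\de \geq 0$ must occur at an interior point of a compact subset, where Hamilton's eigenvector trick from \cite{Ha82}, together with the strict evolution inequality, delivers a contradiction. Hence $L_\de \geq 0$ on $M \times ([0,T) \cap [0,T'))$ for every $\de > 0$, and sending $\de \to 0$ yields $L \geq 0$, which is the Ricci statement; the two-dimensional sectional-curvature statement is proved by the same argument (in dimension two the tensor $L$ essentially reduces to a scalar). The main obstacle is the pointwise eigenvalue computation of the second step, in which one must verify that the precise shape of $\ep_0(1+kt)(1 + t\Sc)\,g$ absorbs $Q(\Ricci)$ and fixes the universal constants $k = 100$ and $T' > 0$; the non-compact modification is then a routine perturbation whose validity rests entirely on the bounded-curvature hypothesis.
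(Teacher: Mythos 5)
Your tensor $L = \Ricci + \ep_0(1+kt)(1 + t\Sc)g$ and the reduction to a pointwise reaction--term estimate at a first null direction agree with the paper, but your treatment of the non-compactness is genuinely different. The paper builds the barrier directly into $L$ as $\si f g$ with $f = e^{\rho^2(1+at)+at}$, $\rho$ the distance from a fixed point in the \emph{evolving} metric and $a \sim \sup|\Riem|$; the barrier is not smooth at the cut locus, which is handled by Calabi's trick (replacing $\rho$ by $\rho(\ga(r)) + \dist(\ga(r),\cdot)$ near the putative null point), and the evolution inequality $(\partt-\lap)f \geq (a/2)f$ only holds for $at \leq 1$, forcing an iteration over consecutive short time intervals. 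You instead add a time-independent linear-growth barrier $\de e^{\La t}\phi g$, avoiding the Calabi trick and the iteration altogether once $\La$ is chosen large. This is a legitimate and in some ways cleaner route, but it rests on an assertion you state far too casually: that there is a \emph{smooth} exhaustion function $\phi \sim \dist_{g_0}(\cdot,b)$ with $|\grad\phi| + |\grad^2\phi| \leq C_0$. Naive mollification of $\sqrt{1+\dist^2}$ does \emph{not} deliver the Hessian bound when the injectivity radius is not bounded below, which the hypotheses here do not give. Such a function does exist on a complete manifold with bounded curvature (e.g.\ by running the heat equation on the distance function for a short fixed time, or by the Tam/Shi construction used in non-compact Ricci flow), but this is a non-trivial input and needs a precise citation; without it the argument has a gap exactly at the point where the paper's more laborious Calabi-trick route requires nothing beyond the stated curvature bound.

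A smaller point: because your barrier is added \emph{after} defining $L$, the eigenvector analysis must be carried out at the first null direction of $L_\de$, not of $L$; concretely one substitutes $\la = -\ep_0(1+kt)(1+t\Sc) - \de e^{\La t}\phi$ into the reaction term, exactly as the paper carries its $\si f$ term through the identity $\la = -\ep t_0\Sc - \ep - \si f$. You gesture at this but should make it explicit, since the strict positivity that defeats the "first time of failure" comes from the barrier's appearance inside the reaction inequality and not only from its evolution.
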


\begin{proof}
The proof is a non-compact version of the proof in \cite{Si4}.
We prove the case $n=3$
(for $n=2$ simply take $N = M \times \Sphere^1$).

Define $\ep = \ep(t) = \ep_0(1 + kt),$
and the tensor $L(t)$ by 
$$ L_{ij} := \Ricci_{ij} +   \ep \Sc t g_{ij}  + \ep g_{ij} + \si f g_{ij}$$
where $\si \leq \ep^2_0$ and $f = e^{\rho^2(1 + a t) + a t}$, $\rho(x,t): = \dist(g(t))(x_0,x)$ for some fixed $x_0$,
and $a = 1000n (1+\sup_{M \times [0,T]} | \Riem(g(t)) |)$.
We will often write $\ep$ for $\ep(t)$ (not to be confused with $\ep_0$).
Notice that 
$\ep_0 < \ep(t)\leq 2\ep_0,$ for all $t \in  [0,{1 \on k})= [0,{1 \on 100}):$ 
we will use this freely.
Then ${L_i}^j = ({R_i}^j + \ep \Sc t {\de_i}^j +  \ep{\de_i}^j + \si f{\de_i}^j)  ,$ and  
as in the paper \cite{Si4}, we calculate:
 \begin{eqnarray*}
(\partt L)_{ij} 
             &=& (\lap L)_{ij}  + N_{ij} - \si \lap f g_{ij}
   +    \si (\partt f) g_{ij}\\ 
\end{eqnarray*}
and $N_{ij}$ is (up to the constant $k = 100$)  
the same as the Tensor from the paper \cite{Si4},
$$N_{ij} := - Q_{ij} + 2R_{ik}R_{jm}g^{km} + \ep  \Sc g_{ij} +  2\ep t |\Ricci|^2 g_{ij}
+ k \ep_0 t \Sc g_{ij}                 + k \ep_0  g_{ij} - 2L_i^lR_{jl},$$
where $Q_{ij}:= 6g^{kl}R_{ik}R_{jl}   - 3\Sc R_{ij} + (\Sc^2 -2S)g_{ij}$.
For our choice of $a$ we get
 \begin{eqnarray*}
(\partt L)_{ij}  && \geq (\lap L)_{ij}  + N_{ij} 
   +    {a \on 2 }\si f g_{ij}\\ 
\end{eqnarray*}
for $ta \leq 1$ in view of the Laplacian 
comparison principle (see the Hessian comparison principle in Appendix A),
as long as $\rho^2$ is smooth in time and space where we differentiate.

In the following, we argue as in the proof of Hamilton's maximum principle,
Theorem 9.1, \cite{Ha82}.
We claim that $L_{ij}(g(t)) > 0$ for all $t \in [0,T).$
Notice that
 $f$ has exponential growth, and the other terms in the definition 
of $L$ are bounded. This guarantees that $L>0$ outside a compact set.
Hence if $L_{ij}(g(t)) > 0$ is not the case, 
then  there exist a first time and point $(p_0,t_0)$ and a direction $w_{p_0}$
for which $L(g(t))(w_{p_0},w_{p_0})(p_0,t_0) = 0.$ 

Choose coordinates about $p_0$ so that at $(p_0,t_0)$ they are orthonormal, and 
so that $\Ricci$ is diagonal at $(p_0,t_0)$ with eigenvalues
$ \la \leq \mu \leq \nu$.
Clearly $L$ is then also diagonal at $(p_0,t_0)$
with  $L_{11} =  \la +   \ep(t_0) t_0 \Sc + \ep(t_0) + \si f 
\leq L_{22} \leq L_{33},$
and so $L_{11} = 0,$ (otherwise $L(p_0,t_0) >0$ : a contradiction).
In particular, \begin{eqnarray}
N_{11}(p_0,t_0) =&&  
(\mu - \nu)^2 + \la(\mu + \nu) +  2\ep t \la^2 + 2\ep t\mu^2 + 2\ep t \nu^2 \cr
&&+
 \ep  \Sc g_{ij} + k \ep_0 t \Sc g_{ij} + k \ep_0  g_{ij} \nonum  \\\label{N}
\end{eqnarray}
in view of the definition of $Q$ 
(see \cite{Ha82} Corollary 8.2, Theorems 8.3,8.4)
and the fact that $L_{11}=0$.
As in \cite{Si4}, we will show that 
${\ti N}_{11}(p_0,t_0) =  N_{11}(p_0,t_0) + {a \on 2}\si f(p_0,t_0)
>0$ which, as we will show, leads to a contradiction.
Notice that $\Sc(\cdot, 0) \geq -\ep_0$ and 
$\sup_{M \times [0,T]} |\Riem | \leq a$ on $[0,T)$ implies
that $\Sc (\cdot,t) \geq -\ep_0$ for all $t \in [0,T)$
from the non-compact maximum principle for 
functions ( this may be seen as follows: i) $\partt (\Sc + \ep_0 + \si f) \geq \lap (\Sc + \ep_0 + \si f)$ and 
$(\Sc + \ep_0 + \si f)(x, \cdot) >0$ 
for $d(x,x_0)$ large enough, where here $f $ is as above, ii) this implies
$\Sc \geq - \ep_0 -\si f$ for all $t \in [0,T)$, iii) $\si>0$ was arbitrary). 
Then $L_{11} = 0 \Rightarrow \la =  -\ep t_0  \Sc - \ep - \si f \leq 0$ for
$t_0 \leq 1$, and hence $\mu + \nu \geq \Sc \geq -\ep$. We will use these
facts freely below.
Substituting  
$\la = -\ep t_0  \Sc - \ep - \si f$ (at $(p_0,t_0)$ ) 
into \eqref{N}, we get 
\begin{eqnarray*}
 N_{11}(p_0,t_0)  & = & (u-v)^2 + (-\ep t_0 \Sc - \ep - \si f )(\mu + \nu)
+ 2 \ep t_0 (\la^2 + \mu^2 + \nu^2) \cr
&&+
\ep  \Sc  + k \ep_0 t \Sc g_{ij} + k \ep_0 
 \cr 
&\geq&  \ep t_0 (   -( \la + \mu + \nu)(\mu + \nu) + 2 \la^2 + 2\mu
^2 + 2\nu^2) - (\ep + \si f)(\mu + \nu)  \cr
 && + \ep  \Sc  + k \ep_0 t_0 \Sc  + k \ep_0 
\cr
&=& \ep t_0 (   -( \la + \mu + \nu)(\mu + \nu) + 2 \la^2 + 2\mu^2 + 2\nu^2) \cr
&& + (-\ep^2 t_0 + k \ep_0 t_0 ) \Sc - \ep^2 - \si \ep f
+ k \ep_0 - \si f (\mu + \nu)  \cr
 &\geq&  \ep t_0 ( \la  \ep  + 2 \la^2 ) \cr
&&  - \si f \ep
+ (k-1) \ep_0 - \si f (\mu + \nu), \cr
\end{eqnarray*}
where here we have used that $ \Sc \geq - \ep$ and $ - \la(\mu + \nu) \geq \la \ep$
in the last inequality (which follows from $\mu + \nu \geq \Sc \geq -\ep$ and
$\la \leq 0$).
Hence
\begin{eqnarray}
N_{11}(p_0,t_0) + {a \on 2} \si f  & > & 0, \label{N11}
\end{eqnarray}
since $\mu + \nu \leq {a \on 100}$.


The rest of the proof is standard (see  \cite{Ha82} Theorem 9.1):
extend $ w(p_0,t_0) = \parti{}{x^1}(p_0,t_0)$ in space to a vector field $w(\cdot)$ in a small
neighbourhood of $p_0$ so that $^{g(t_0)}\grad w(\cdot)(p_0,t_0) = 0,$ and let
$w(\cdot,t) = w(\cdot).$
Then
$$0 \geq  (\partt L(w,w))(p_0,t_0) \geq  (\lap L(w,w))(p_0,t_0) + N(w,w) > 0 ,$$
which is a contradiction. 

If $\rho^2$ is not differentiable at $(p_0,t_0)$ then we may use the trick of Calabi:
 
let $\ga:[0,l = \rho(p_0,t) ] \to M$ be a Geodesic from $x_0$ to $p_0$ realising the distance:
and parametrised by distance, so that 
$\rho(\ga(s),t) = L_t(\ga|_{[0,s]})= s$, where $L_t$ is the length of a curve measured using $g(t))$.
Since $\rho$ is not differentiable at $p_0$ it must be that $p_0$ is a cut point of $x_0$.
Set $\ti \rho(x,t) := \rho(\ga(r),t) + \dist(g(t))(\ga(r), x)$ for some small fixed $r>0$.
Then in a parabolic neighbourhood of $(p_0,t_0)$, 
$\ti \rho$ is smooth. 

Furthermore, from the triangle inequality,
$\ti \rho(x,t) \geq \rho(x,t)$. Also, $\ti \rho(p_0,t_0) = \rho(p_0,t_0)$.
Define $\ti L$ by 
 $$\ti L_{ij} := \Ricci_{ij}  + \ep \Sc t g_{ij} + \ep g_{ij} + \si \ti f g_{ij},$$
where $\ti f = e^{\ti \rho^2(1 + a t) + a t}$.
Then we have just shown that $\ti L \geq L$ and that
$\ti L(p_0,t_0) = L(p_0,t_0)$ and so we argue with $\ti L$ instead of $L$.
At $(p_0,t_0)$ we have $\partt \ti \rho \leq {a \on 50} \ti \rho$ and $\lap \ti \rho^2 \leq  {a \on 50}$
 (if we choose $r$ small enough): that is $\rho$ and $\ti \rho$
satisfy the same inequalities at $(p_0,t_0)$ (up to the constant $50$).

Hence we may argue as above to obtain a contradiction.

Now letting $\si$ go to zero, we get
$\Ricci_{ij} +   \ep \Sc t g_{ij}  + \ep g_{ij} \geq 0$ as long as $t a \leq 1$ and $t k \leq 1$.
But then, we may argue as above starting at $t_0 = {1 \on a}$, but now with $f_1$ in place of $f$,
$f_1 = e^{\rho^2(1+a(t-t_0)) + a(t-t_0) }$ to obtain the same result on
$[0,2t_0]$ as long as $tk \leq 1$. Continuing in this way, we see that
 $\Ricci_{ij} +   \ep \Sc t g_{ij}  + \ep g_{ij} \geq 0$ as long as  $tk \leq 1$.

The case for the sectional curvatures is similar:
from \cite{Ha86}, Sec. 5, we know that the reaction equations
for the curvature operator are
\begin{eqnarray*}
\partt \al &=& \al^2 + \be\ga, \\
\partt \be &=& \be^2 + \al \ga, \\
\partt \ga &=& \ga^2 + \al \be.
\end{eqnarray*}

It is shown in \cite{Si4} (in the proof of the compact version of this Lemma) that (for $\ep(t) := {1 \on  2}(\ep_0 + kt)$)
either 
$\partt (\al + \ep t \Sc + \ep )  >0$ or
\begin{eqnarray}
\partt (\al + \ep t \Sc + \ep )   
\geq  \ep ( \al + \be) + k \ep_0 t \Sc + k \ep_0  +   (\al + \ep t \Sc + \ep) \ga.  \label{alphy}    
\end{eqnarray}
Also $f := e^{\rho^2 (1 + at) + at}$ satisfies
\begin{eqnarray*}
\partt f \geq \lap f + {a \on 2}  f \cr
\end{eqnarray*}
at the points where $f$ is smooth and $ta \leq 1$.
So the ordinary differential equation for $f$ satisfies
\begin{eqnarray}
\partt f \geq {a \on 2}  f \label{betty}
\end{eqnarray}
at the points where $f$ is smooth and $ta \leq 1$.

Since $f$ is exponential in distance, the points
where $ \al + \ep t \Sc + \ep + \si f \leq 0$ is a compact set.
Hence, if $\al + \ep t \Sc + \ep + \si f>0$ is not true, then there must exist a first time and point $(p_0,t)$ where
this fails. 
At such a point $(p_0,t)$  we have (from \eqref{alphy} and \eqref{betty}):
\begin{eqnarray}
\partt (\al + \ep t \Sc + \ep + \si f )  &  \geq &     \ep ( \al + \be) + k \ep_0 t \Sc + k \ep_0  +   (\al + \ep t \Sc + \ep) \ga + {a \on 2} \si  f \cr 
& = &  \ep ( \al + \be) + k \ep_0 t \Sc + k \ep_0  - \si f \ga  + {a \on 2} \si  f \cr
& \geq & 2\ep \al + k \ep_0 t \Sc + k \ep_0  + {a \on 4} \si  f \nonum \\
\label{inserthere}
\end{eqnarray}
as long as $ta \leq 1$, where we have used that  $\al  + \ep t \Sc  + \ep =  - \si f$,
and that $|\ga| \leq {a \on 100}$.
Using  $\al  + \ep t \Sc  + \ep = -\si f$ again, we get  
\begin{eqnarray*}
 2\ep \al + k \ep_0 t \Sc + k \ep_0   + {a \on 4} \si  f  
&= & 2\ep( - \ep t \Sc - \ep - \si f)   + k \ep_0 t \Sc + k \ep_0  + {a \on 4} \si  f \cr
& \geq& (k-2) \ep_0 + {a \on 4} \si  f    \cr
&>&  0,
\end{eqnarray*}
 since $\Sc \geq -3 \ep_0$ is preserved by the flow,
and $t \leq {1 \on k}$.
Hence, inserting this into \eqref{inserthere}
we get
\begin{eqnarray*}
\partt (\al + \ep t \Sc + \ep +\si f)  &  > & 0
\end{eqnarray*}
at a point where $\al  + \ep t \Sc  + \ep + \si f = 0.$
Choose an orthonormal basis for the two forms at $(p_0,t_0)$:
$\phi^1 = (\phi^1)_{ij} dx^i \wedge dx^j,\phi^2 = (\phi^2)_{ij} dx^i \wedge dx^j,
\phi^3 = (\phi^3)_{ij} dx^i \wedge dx^j$ (time independent by definition)
for which the curvature operator is diagonal, and assume that $\curlR(\phi^1, \phi^1) = 
R^{ijkl} {\phi^1}_{ij} {\phi^1}_{ij} $ is the smallest
Eigen-value of the curvature operator $\curlR$.
Then we have
\begin{eqnarray*}
&&\partt( R^{ijkl}(p,t) (\phi^1)_{ij} (\phi^1)_{kl} +  \ep t \Sc + \ep + \si f )_{(p_0,t_0)} \cr
&& \ \ \ > (\lap R)^{ijkl}(p_0,t_0) (\phi^1)_{ij} (\phi^1)_{kl} 
+ \lap(  \ep t \Sc + \ep + \si f)_{(p_0,t_0)}. \cr
\end{eqnarray*}

Using the maximum principle, we obtain the result by arguing as in the 
case of the Ricci curvature above
(once again, if this inequality is violated at some point and first time, then we may need to 
modify $\rho$ in order to make sure that it is smooth,
as in the argument above for the Ricci curvature).
\end{proof}

\section{Bounding the distance and volume growth in  terms of the curvature}

\setcounter{equation}{0}

The results of this section hold for all dimensions.

\begin{lemm}\label{diamlemm}
Let $(M^n,g(t))_{t \in [0,T)}$ be a smooth solution to Ricci flow
with 
\begin{eqnarray}
&&\Ricci(g(t)) \geq - 1, \cr
&& | \Riem(g(t))| t \leq c_0.\nonum \\ \label{bobo2}
\end{eqnarray}
Then
\begin{eqnarray} 
e^{c_1(c_0,n) (t-s)} d(p,q,s) \geq  d(p,q,t) \geq  d(p,q,s) -c_2(n,c_0) (\sqrt t  - \sqrt s)\label{diogo}
\end{eqnarray}
for all $0\leq s \leq t \in [0,T)$.
\end{lemm}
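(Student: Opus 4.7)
The plan is to treat the two inequalities separately, with the upper bound being essentially immediate from the Ricci lower bound, and the lower bound requiring Hamilton's distance distortion estimate combined with the $|\Riem|\le c_0/t$ hypothesis.

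For the upper bound $d(p,q,t)\le e^{c_1(t-s)}d(p,q,s)$, I would argue as follows. The Ricci flow equation combined with $\Ricci(g(t))\ge -1$ gives $\partial_t g_{ij}=-2\Ricci_{ij}\le 2g_{ij}$ as a tensor inequality. Differentiating the length squared of a tangent vector along the flow gives $\partial_t |v|_{g(t)}^2\le 2|v|_{g(t)}^2$, so for any fixed smooth curve $\gamma$ joining $p$ to $q$, its $g(t)$-length satisfies $L_{g(t)}(\gamma)\le e^{(t-s)}L_{g(s)}(\gamma)$. Taking the infimum over all such curves gives $d(p,q,t)\le e^{(t-s)}d(p,q,s)$, which is the claim with $c_1=1$ (one could absorb dimensional factors depending on how one states $\Ricci\ge-1$).

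For the lower bound, I would invoke Hamilton's classical distance distortion estimate: if at time $t_0$ one has $\Ricci(g(t_0))\le (n-1)K$ on both balls $B_{r_0}(p,t_0)$ and $B_{r_0}(q,t_0)$, then for a minimising geodesic realising $d(p,q,t_0)$ the forward time derivative satisfies
\begin{equation*}
\frac{d}{dt}\bigg|_{t=t_0^+}d(p,q,t)\ge -C(n)\bigl(Kr_0+r_0^{-1}\bigr).
\end{equation*}
Under our hypotheses, $|\Riem(g(t))|\le c_0/t$ yields $|\Ricci(g(t))|\le (n-1)c_0/t$, so we may take $K=c_0/t$. The optimal radius is $r_0=\sqrt{t/c_0}$, which equates the two terms and gives $\frac{d}{dt}d(p,q,t)\ge -C(n,c_0)/\sqrt{t}$ (here one should also check that such a ball is contained in the manifold and that the estimate applies; this is a standard verification using completeness and the curvature bound). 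Integrating from $s$ to $t$ produces $d(p,q,t)-d(p,q,s)\ge -2C(n,c_0)(\sqrt{t}-\sqrt{s})$, which is the desired lower bound.

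The main technical obstacle is the lower bound, specifically handling the distance function at points where it fails to be smooth in time (cut locus issues, or $p=q$ type degeneracies). As in the paper's proof of Lemma \ref{seclemma2}, I would bypass this using the Calabi trick: approximate $d(p,q,t)$ from above by $d(p,\gamma(r),t)+d(\gamma(r),q,t)$ for a small parameter $r$, where $\gamma$ is a minimising geodesic, so that only the geodesic segment near $q$ matters and one works at smooth points. The resulting upper barrier has the same forward time derivative bound at $(p,q,t_0)$, and passing $r\to 0$ gives the inequality in the viscosity/barrier sense; this is enough to integrate. Everything else (the ball inclusions needed for Hamilton's estimate, the choice $r_0=\sqrt{t/c_0}$, the constants) is routine once the Hamilton-type one-sided bound is established. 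The statement is essentially what is promised for Appendix B of the paper.
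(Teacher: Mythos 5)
Your proof is correct and follows essentially the same route as the paper's: the upper bound comes from the Ricci lower bound (you via the tensor inequality $\partial_t g \le 2g$ and comparison of curve lengths, the paper via Hamilton's Lemma 17.3 in \cite{Ha95}, which are equivalent), and the lower bound comes from a Hamilton--Perelman distance distortion estimate fed with $|\Riem|\le c_0/t$, optimized at $r_0\sim\sqrt{t/c_0}$, then integrated to yield $-2C\sqrt{c_0}(\sqrt t-\sqrt s)$. The paper's Appendix B invokes Hamilton's Theorem 17.2 with the \cite{CCCY} editors' modification $d(P,Q,t)\ge d(P,Q,s)-C\int_s^t\sqrt{M(\tau)}\,d\tau$ instead of the Perelman-form localization you use, but this is the same computation; the only small caveat is that your Calabi-trick aside conflates the spatial smoothing of the distance with the forward-difference-quotient argument that is actually used (the latter works along a fixed minimizing geodesic and does not require differentiability of $d$ in $t$), though this does not affect the validity of the proof.
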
 

\begin{proof}
These results essentially follow from  \cite{Ha95}, theorem 17.2 (with a slight modification of the proof suggested by the editors in \cite{CCCY} : 
see Appendix B)
and \cite{Ha95}, Lemma 17.3: see Appendix B for a proof.
\end{proof}

\begin{coro}\label{volcorr}
Let  $(M^n,{g}(t))_{t \in [0,T)}$ be an arbitrary smooth solution to Ricci flow ($g(0) = g_0$)
satisfying the conditions
\ref{bobo2}
and assume that there exists $v_0 >0$  such that 
\begin{eqnarray*}
&&\vol(B_1(x,0)) \geq v_0 >0  \ \ \forall x \  \in M.
\end{eqnarray*}
Then there exists an $S = S(c_0,v_0,n) >0$ such that  
\begin{eqnarray*}
&&{\vol(B_1(x,t))} \geq {2 v_0 \on 3} >0  
\ \ \  \forall x \in M, \ \forall t \in [0,S) \intersect [0,T).
\end{eqnarray*}
Notice that this then implies 
$$ {\vol(B_r(x,t)) \on r^n} \geq { 2 e^{-n} v_0 \on 3} \  \forall 1>r>0,$$
in view of the Bishop Gromov comparison principle.  
\end{coro}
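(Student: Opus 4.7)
My plan is to argue by contradiction, combining the distance estimates of Lemma \ref{diamlemm} with Gromov's precompactness theorem and the volume continuity theorem of Cheeger-Colding from \cite{ChCo}.

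If no such $S = S(c_0, v_0, n) > 0$ existed, I would extract a sequence of Ricci flows $(M_i^n, g_i(t))_{t \in [0, T_i)}$ satisfying the hypotheses with the same constants $c_0, v_0, n$, together with points $x_i \in M_i$ and times $t_i \searrow 0$ for which $\vol_{g_i(t_i)}(B_1(x_i, t_i)) < 2v_0/3$. Applying Lemma \ref{diamlemm} at $s = 0$ yields, for every $R > 0$ and all $p, q$ with $d_{g_i(0)}(p, q) \leq R$,
$$|d_{g_i(t_i)}(p, q) - d_{g_i(0)}(p, q)| \leq (e^{c_1 t_i} - 1) R + c_2 \sqrt{t_i},$$
the right-hand side tending to zero as $i \to \infty$ for each fixed $R$. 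The identity map is therefore a $\psi_i(R)$-Gromov-Hausdorff approximation between the $R$-balls about $x_i$ measured in $d_{g_i(0)}$ and in $d_{g_i(t_i)}$, with $\psi_i(R) \to 0$.

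Since each $(M_i, g_i(0))$ and each $(M_i, g_i(t_i))$ is a complete $n$-manifold with $\Ricci \geq -1$, Gromov's precompactness theorem gives a subsequence (still denoted $i$) along which $(M_i, d_{g_i(0)}, x_i) \to (X, d_X, x_\infty)$ in the pointed Gromov-Hausdorff sense; the distance closeness above then forces $(M_i, d_{g_i(t_i)}, x_i)$ to converge to the same pointed limit. The Cheeger-Colding volume continuity theorem \cite{ChCo} next applies to both sequences under the uniform lower Ricci bound and yields
$$\lim_{i \to \infty} \vol_{g_i(0)}(B_1(x_i, 0)) = \Haus^n(B_1(x_\infty)) = \lim_{i \to \infty} \vol_{g_i(t_i)}(B_1(x_i, t_i)),$$
where the middle ball is taken in $(X, d_X)$. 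The first limit is $\geq v_0$ whereas the second is $\leq 2v_0/3 < v_0$, a contradiction. The closing statement $\vol(B_r(x,t))/r^n \geq (2/3) e^{-n} v_0$ for $0 < r \leq 1$ then follows directly from Bishop-Gromov applied at time $t$ using $\Ricci(g(t)) \geq -1$.

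The main obstacle is arranging the invocation of Cheeger-Colding at the specific radius $1$, since the volume convergence theorem is a priori stated for all but a countable set of exceptional radii. This is handled by passing to a nearby radius at which convergence holds and exploiting the continuity of the volume in the radius under a lower Ricci bound. A secondary point is verifying that both sequences of pointed metric spaces share the same Gromov-Hausdorff limit, which follows from the uniform distance closeness provided by Lemma \ref{diamlemm} together with the preserved hypothesis $\Ricci(g(t)) \geq -1$ contained in \eqref{bobo2}.
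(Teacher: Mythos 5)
Your proposal is correct and follows essentially the same route as the paper: contradiction via a sequence of flows with $t_i \searrow 0$ and $\vol(B_1(x_i,t_i)) < 2v_0/3$, Lemma \ref{diamlemm} to show $d_{g_i(t_i)}$ and $d_{g_i(0)}$ converge to the same pointed Gromov--Hausdorff limit (the paper cites Corollary 7.3.28 of \cite{BuBuIv} for this step), and Cheeger--Colding volume continuity to equate the two limits of volumes and reach a contradiction. Your remark about exceptional radii is a fine technical observation, but it does not change the structure of the argument, which matches the paper's.
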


\begin{proof}
If this were not the case, then there exist solutions 
 $(M_i^n,{^i g}(t))_{t \in [0,T_i)}$ satisfying the stated conditions and there exist
$t_i \in [0,T_i),$ $t_i \itoinfty 0$
and points $p_i \in M_i$ such that
$ {\vol(B_1(p_i,t_i))} < {2 v_0 \on 3}.$
A subsequence of $(M_i,d({^i g}(0)),p_i )$ converges to 
$(Y,d,p)$ in the pointed Gromov-Hausdorff limit.
Clearly then  $(M_i,d({^i g}(t_i)),p_i )$ also converges to 
$(Y,d,p)$, in view of the characterisation of Gromov Hausdorff convergence 
given in Corollary 7.3.28 of \cite{BuBuIv},  
and the estimates \eqref{diogo} (since $t_i \to 0$).
The Theorem of Cheeeger and Colding says that volume is continuous under the limit of non-collapsing
spaces with Ricci curvature bounded from below:
$$\lim_{i \to \infty} {\vol(B_1(p_i,t_i)} = \Haus^n(B_1(p)) = \lim_{i \to \infty} {\vol(B_1(p_i,0)}.$$
But this is a contradiction as we then 
have 
$$  { 2 v_0 \on 3}  > {\vol(B_1(p_i,t_i)} \to  \Haus^n(B_1(p))
=  \lim_{i \to \infty} {\vol(B_1(p_i,0)} > v_0.
$$
\end{proof}

\section{Non collapsed non compact three manifolds with curvature bounded from below.}

\setcounter{equation}{0}

The results of this section are only valid for dimensions two and three-


\begin{theo}\label{maintheo}
Let $(M,g_0)$ be a complete smooth three (or two) manifold without boundary in $\curlTi(3,k,v_0)$: that is
\begin{eqnarray*}
\mbox{\rm (a)} && \Ricci(g_0) \geq  k, \cr
\mbox{\rm (b)} &&\vol({{}^{g_0}B}_1(x))\geq v_0  >0, \ \forall x \in M \cr 
\mbox{\rm (c)} && \sup_M |\Riem(g_0) | < \infty.
\end{eqnarray*}
Then there exists an $S = S(v_0,k) >0$ and $K = K(v_0,k)$ and a solution
$(M,g(t))_{t \in [0,T)}$ to Ricci-flow which satisfies
$T\geq S,$
and \begin{eqnarray}
\mbox{\rm (a$_t$)}&&\Ricci(g(t)) \geq -K^2 \ \forall  t \in (0,T) \cr
\mbox{\rm (b$_t$)}&&\vol ({{}^{g_t}B}_1(x))\geq {v_0 \on 2}  >0, \ \forall x \in M,
\forall  t \in (0,T)  \cr
\mbox{\rm (c$_t$)}&&\sup_{M} |\Riem(g(t))| \leq {K^2 \on t}, \forall  t \in (0,T) \cr
\mbox{\rm (d$_t$)}&& e^{c_1(c_0,n) (t-s)} d(p,q,s) \geq  d(p,q,t) \geq  d(p,q,s) -c_2(n,c_0) (\sqrt t  - \sqrt s)  \cr
&& \ \  \forall  \  0< s \leq  t \in (0,T) \nonum \\
\label{theimes}
\end{eqnarray}
\end{theo}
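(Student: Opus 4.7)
The plan is to combine Shi's short-time existence with the estimates of Sections~4--6 in a bootstrap/continuity argument. By the scaling invariance of the Ricci flow, I first replace $g_0$ by $\tilde g_0 := \lambda^2 g_0$ for $\lambda = \lambda(k, \ep_0)$ chosen large enough that $\Ricci(\tilde g_0) \geq -\tfrac{\ep_0}{4} \tilde g_0$, where $\ep_0$ is a small universal constant to be fixed later; Bishop--Gromov comparison produces a uniform lower bound $\tilde v_0 = \tilde v_0(v_0, k) > 0$ on $\vol(\tilde B_1(x))$ for the rescaled metric, and (c) is preserved. Shi's theorem now delivers a maximal-with-BC solution $(M, g(t))_{t \in [0, T_{\max})}$ in the sense of Definition~\ref{maxibc}.

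Next define
\[
T^* := \sup\Bigl\{ \tau \in [0, T_{\max}) \;:\; \Ricci(g(t)) \geq -1 \text{ and } \vol(B_1(x,t)) \geq \tfrac{\tilde v_0}{2} \fall t \in [0, \tau], \; x \in M \Bigr\}.
\]
Both inequalities hold strictly at $t=0$ and are continuous in $t$ for a smooth solution with bounded curvature, so $T^* > 0$. On $[0,T^*)$ all three hypotheses of Lemma~\ref{globaltimelemma} are in force, hence
\[
|\Riem(g(t))| \leq \frac{c_0}{t}, \qquad c_0 = c_0(\tilde v_0),
\]
and that lemma guarantees that the solution does not go singular at $T^*$; thus $T^* < T_{\max}$ and $g$ extends smoothly up to $t = T^*$.

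To close the bootstrap, on $[0, T^* \wedge T')$ (with $T'$ the universal constant from Lemma~\ref{seclemma2}), substituting $\Sc(g(t)) \leq 3 c_0/t$ into Lemma~\ref{seclemma2} yields
\[
\Ricci(g(t)) \geq -\ep_0(1 + 100 t)(1 + 3 c_0) \geq -\tfrac{1}{2},
\]
provided $\ep_0 = \ep_0(c_0) = \ep_0(v_0, k)$ is chosen small enough --- a strict improvement of the Ricci constraint in the definition of $T^*$. Simultaneously, Corollary~\ref{volcorr} applied with these $c_0$ and $\tilde v_0$ gives $\vol(B_1(x,t)) \geq \tfrac{2\tilde v_0}{3} > \tfrac{\tilde v_0}{2}$ for $t < S = S(\tilde v_0, c_0) = S(v_0, k)$, which strictly improves the volume constraint. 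Both constraints are therefore strictly improved throughout $[0, \min(T', S))$, and a standard open-closed argument forces $T^* \geq \min(T', S) =: S(v_0, k)$. Setting $T := T^*$ and letting $K = K(v_0, k)$ absorb the above constants gives (a$_t$)--(c$_t$); (d$_t$) is then exactly Lemma~\ref{diamlemm} applied to this solution. A final reverse-rescaling converts these constants into the ones in the statement, still depending only on $v_0$ and $k$.

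The main obstacle is the ordering of constants in the bootstrap: $\ep_0$ must be chosen only after $c_0$ is known from Lemma~\ref{globaltimelemma}, which demands that $c_0$ depend only on $\tilde v_0$ and the normalized Ricci bound (here $1$), not on $\ep_0$ itself. The other technical burden is that all the ingredient lemmas had to be developed in non-compact versions (Sections~4 and~5), since the classical compact maximum-principle arguments are unavailable; once those lemmas are granted the bootstrap closes and the theorem follows.
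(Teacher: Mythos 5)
Your proposal follows essentially the same route as the paper: rescale so that $\Ricci \geq -\ep_0/4$ while preserving the non-collapsing constant $\ti v_0 = \ti v_0(v_0,k)$, run Shi's short-time existence, define $T^*$ as the maximal time on which $\Ricci \geq -1$ and $\vol(B_1) \geq \ti v_0/2$ hold, invoke Lemma~\ref{globaltimelemma} to obtain $|\Riem| \leq c_0/t$ (and to rule out $T^* = T_{\max}$), then feed this into Lemma~\ref{seclemma2} and Corollary~\ref{volcorr} to strictly improve both constraints on $[0,\min(T',S))$, conclude $T^* \geq \min(T',S)$, and rescale back --- and you correctly isolate the key point that $\ti v_0$ and $c_0$ must be fixed before $\ep_0$. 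The one place your write-up is lighter than the paper's is the justification that $T^* > 0$ ``by continuity'': on a noncompact $M$ one cannot simply assert that $\inf_M \Ricci(g(t))$ is continuous at $t=0$, since Shi's interior derivative estimates degenerate as $t \to 0$ and so $\lap \Ricci$ is not a priori uniformly bounded near the initial time; the paper instead runs a noncompact maximum principle with the exponential barrier $\ep \exp(\rho^2(1+at)+at)\,g$ to get $\Ricci(g(t)) \geq -2\ep$ for short time, and uses $|\partt g| \leq C(N)$ on slightly shortened time intervals (where $N$ bounds the curvature) to control the volume --- both of which fill exactly the spot your continuity claim occupies.
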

(note that the estimates are trivial for $t =0$).

\begin{proof}
We assume $n=3$. The argument for $n=2$ is the same.
Before proving the theorem rigorously, we present a sketch of the proof
which leaves out the technical details.
This should give the reader a clear picture of the structure
of the proof.
As a first step, we
scale the metric by a large constant, so that
 $\Ricci(g_0) \geq - \ep$ for a small $\ep = \ep(v_0,k)>0$.
The condition 
\begin{eqnarray}
\vol({\up{g_0} B}_r(x))  \geq  {\tvz}r^3 \ \forall \ 0<r \leq 1
\end{eqnarray}
for some $\tvz = \tvz(v_0,k)>0$, which is true in view of the Bishop-Gromov volume comparison principle,
remains valid under this scaling.

Now flow this metric for a maximal amount of time.
Let $[0,T_M)$ be the maximal time interval
for which the flow exists and
\begin{eqnarray}
\inf_{x \in M} \vol(B_1(x,t)) & > & {\tvz \on 2}, \label{maximal1}\\
\inf_M \Ricci(g(x,t)) &>& -1\label{maximal2}
\end{eqnarray}
for all $t \in [0,T_M)$.
Using the maximum principle and standard ODE estimates, one shows
easily that $T_M>0$.
The aim is now to show that $T_M\geq S$ for some $S = S(\ti v_0)>0$. 
From the \ref{globaltimelemma} we see that
if $T_M \geq 1$ then the estimates \at,\bt and \ct are satisfied.
So w.l.o.g $T_M \leq 1$.
From the \ref{globaltimelemma} again,
$$|\Riem(g(t)| \leq {c_0(\ti v_0) \on t}$$
for all $t \in (0,T_M)$.
Using Lemma \ref{seclemma2} we see that
$\Ricci \geq - 2\ep R t - 2\ep$ for all $t \in [0,T')\intersect (0,T_M)$
for some universal constant $T'>0$.
But these two estimates combined imply 
$\Ricci \geq -{1 \on 2}$
for all $t \in [0,T')\intersect (0,T_M)$ if  $2\ep c_0 \leq {1 \on 4}$
(we assume $c_0 >1$). We assume that we have chosen $\ep$ small enough,
in order that this estimate holds.
Similarly, using 
\ref{volcorr}, there exists a 
$T''= T''(\tvz,c_0)>0$,
such that $\vol(B_1(x,t)) > {2\tvz \on 3}$ for all 
$t \in [0,T') \cap [0,T'') \cap [0,T_M)$.
If $T_M < \min(T',T'')$, then we obtain a contradiction to the
definition of $T_M$
($T_M$ should be thought of as the first time where at least one
of the conditions \ref{maximal1} or \ref{maximal2} is violated).
Hence $T_M \geq \min(T',T'')= :S$.
But then we may use \ref{globaltimelemma} again to
show that \at,\bt,\ct are satisfied on $(0,S)$.
Scaling back to the orginal etsimates leads to rescaled estimates
 \at,\bt,\ct (with other constants).
\dt follows immediately from 
Lemma \ref{diamlemm}.
Now we prove the Theorem rigorously.

By the Bishop-Gromov volume comparison principle, we have
\begin{eqnarray}
\vol({\up{g_0} B}_r(x))  \geq  {\tvz}r^3 \ \forall \ 0<r \leq 1, \label{volli2}
\end{eqnarray}
for some $\tvz = \tvz(v_0,k)>0$.
Rescale the metric by the  constant $1000 c_0$
so that $\Ricci(g_0) \geq - \ep$ where $\ep = {1 \on 1000 c_0}$ and 
$c_0 = c_0(3,-{1 },{\tvz \on 2} )$  is from the Lemma \ref{globaltimelemma}.
Notice that
\eqref{volli2} is still true for this new rescaled metric, as we have scaled
by a constant which is larger than $1$.
We denote our rescaled metric also by $g_0$.

From the work of W.Shi (see \cite{Sh}, main Theorem) we know that there exists a solution
$(M,g(t))_{t \in [0,T)} $ to Ricci flow, with $g(0) = g_0$,
$$\sup_{M} |\Riem(g(t))| < \infty$$
for all $t \in [0,T)$. Without loss of generality, $(M,g(t))_{t \in [0,T)}$ is a maximal
solution with BC in the sense of Definition \ref{maxibc}. 
Let $T_M$ be the supremum over all $S \leq T$ such that:
\begin{eqnarray}
\inf_{x \in M} \vol(B_1(x,t)) & > & {\tvz \on 2}, \label{maximal11}\\
\inf_M \Ricci(g(x,t)) &>& -1\label{maximal12}
\end{eqnarray}
for all $t \in [0,S)\cap[0,T)$.
First we show that $T_M >0$.
We have bounded curvature on compact time intervals,
$N(\de):= \sup_{M \times [0,T-\de]} |\Riem(g(t))| < \infty,$ and hence
\begin{eqnarray}
|\partt g| \leq C(N) \label{ddtest}
\end{eqnarray}
on such time intervals, which implies 
$ \vol(B_1(x,t)) \geq \vol(B_1(x,0))(1 - \si)$ for 
$t \leq H$ $H = H(\si,N)$ small enough
($\si >0$ is an arbitrary constant).
Also, 
\begin{eqnarray*}
\partt \Ricci &\geq & \up{g} \lap \Ricci -c(N)g
\end{eqnarray*}
which implies 
(choose $a = a(N,\ep)$ large enough)
\begin{eqnarray*}
\partt (\Ricci + \ep \exp^{( \rho^2(1+at) +at)}g) \geq 
\lap (\Ricci + \ep \exp^{( \rho^2(1+at) +at)}g) 
\end{eqnarray*}
for $t \leq K$, $K= K(N,\ep)$ small enough
(see argument in the proof of Lemma \ref{seclemma2}) at all points where $\rho^2$ is differentiable.
Since $\Ricci + \ep \exp^{( \rho^2(1+at) +at)}g $ must take its infimum at an 
interior point, we get (arguing as in the proof of Lemma \ref{seclemma2} for some
fixed base point $x_0$)
$\Ricci \geq -2\ep $ for 
$t \leq K$ small enough as the base point was arbitrary.
This means, that the conditions are not violated for a short time.

Due to  Lemma \ref{globaltimelemma} we have $T_M <T$: if $T_M = T$, 
then we could extend the solution to the time interval 
$[0,T + \ep)$ for some small $\ep$ using the result of 
Shi (see the discussion at the beginning of section 2) and Lemma \ref{globaltimelemma}, which would contradict the definition of $T$.
W.l.o.g. $T_M \leq 1$ : otherwise we may apply Lemma 
\ref{globaltimelemma} to immediately obtain the result.
From the same lemma (Lemma \ref{globaltimelemma}), we know that there exists a $c_0 = 
c_0({\tvz \on 2})$
such that $|\Riem|(t) \leq {c_0 \on t},$ for all $ t \in [0,T_M)$.
Using Lemma \ref{seclemma2} and the fact
that $\Sc(t) \leq {c_0 \on t}$ (combined with the choice of $c_0$)
 we see that there exists a global constant 
$T'$ such that
$\Ricci \geq -{1 \on 2}$ for all $t \in [0,T_M)\cap [0,T')$. 
So the Ricci curvature condition is not violated on this time interval.
Furthermore, in view of Corollary \ref{volcorr}, there exists a 
$T''= T''(\tvz)>0$,
such that $\vol(B_1(x,t)) > {2\tvz \on 3}$ for all $t \in [0,T'') \cap [0,T') \cap [0,T_M) $ for all $x \in M$.
So the volume condition is not violated on this time interval.

From the definition of $T_M,T'$ and $ T''$ we have 
$T\geq T_M \geq \min(T'',T')=: S(v_0)$

So we have a well defined time interval for which the conditions
\eqref{maximal11} and \eqref{maximal12} are not violated.
Furthermore, the curvature is like $c_0 \on t$ on this time interval.
Hence we have $$|\Riem(g(t))| \leq {c_0 \on t}$$ and
$$ \Ricci \geq -1$$ for all $t \leq S$.
Now we rescale the metric back, to obtain the result:
the rescaled solution
$h(\cdot,\hat t) = ({1 / 1000 c_0})g(\cdot, 1000 c_0 \hat t)$
is the desired solution. Its' initial value is given by $g_0$
($g_0$ is as in the beginning of the proof of the theorem) and it satisfies
the required estimates by scaling ( $|\Riem \leq {c_0 \on t}$ is a scale invariant inequality,
and the estimate '{\it  $ \Ricci \geq -1$ for all $t \leq S$}' scales to  '{\it $ \Ricci \geq -1000c_0$
 for all $t \leq S / (c_0 1000)$}').
That the volume of a ball of radius one is larger than $v_0 /2$ for the evolving metric  
follows from the corollary of the previous section (after shortening the time interval if necessary).
The estimate $\rm{ (d_t)}$ follows immediately from Lemma \ref{diamlemm}.

\end{proof}

\section{ Conformal deformation of non-collapsed manifolds with $\Ricci \geq -1$}
Let $(M,g)$ be a manifold satisfying (b) and (c):
\begin{itemize}
\item[(b)] $\Ricci \geq -k$,
\item[(c)] $\vol(B_1(x)) \geq v_0 >0$ for all $x \in M$. 
\end{itemize}
We wish to modify the metric $g$ to a new metric $g_i$ so that
\begin{itemize}
\item $g = g_i$ on $B_i(b)$,
\item $\Ricci \geq -{\ti k}(k,n,v_0)$,
\item $\sup_M |\Riem(g_i)| < \infty$,
\item $\vol({}^{g_i} B_1(x) ) \geq \ti v_0(k,n,v_0) >0$ for all $x \in M$,
\end{itemize}
where $b$ is a fixed origin and $i \in \N$.
In the next section we will apply the results of the previous sections in order to flow the $g_i$'s, and then we will take a limit in $i$ of the resulting solutions.

For convenience, we introduce the following notation.
\begin{defi}
Let $h: \R_0^+ \to \R^+$ be a function. We say that $h$ is a {\it function with controlled growth }
if  
$$h(x) \leq (\exp \of \exp \of \ldots \exp)(x),$$
where the function on the right hand side is the composition
of $\exp$ $m$ times, and $m$ is a fixed number in $\N$. 
We call functions of the type appearing in the right hand side an 
{\it exponential comparison function}.
\end{defi}

We require the following help Lemma about exponential comparison functions.

\begin{lemm}\label{littely}
Let $h$ be an  exponential comparison function, $0<k \in \R$, $ 0 \leq p \in \R$. 
Let $h_i:\R^+ \to \R^+$, $h_i(x):= h( (x-i)^4_+)$.
We have
\begin{eqnarray}
|r|^p + |h(r)|^p + |h'(r)|^p + |h''(r)|^p && \leq c_{k,p} e^{kh(r)}    \ \ \mbox{  for all  } r \in \R^+, \cr
|h_i(y)|&& \leq c_{k}  e^{kh_i(y)}   \ \ \mbox{  for all  }  \ y \in \R^+, \cr
|h_i'(y)| && \leq c_{k} e^{kh_i(y)}  \ \ \mbox{  for all  }  \ y \in \R^+, \cr
|h_i''(y)| && \leq  c_{k} e^{kh_i(y)}  \ \ \mbox{  for all  } \ i<y \in   \R^+, \nonum \\
\label{thehs2}
\end{eqnarray}
for some constants $c_p$, $c_{k,p}$, depending on $p$, respectively 
$k$ and $p$,  and the function $h$ {\bf but not on} $i$.
\end{lemm}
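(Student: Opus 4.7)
The plan is to reduce all four inequalities to the first one, whose proof rests on the fact that an $m$-fold iterated exponential grows so rapidly that $e^{kh(r)}$ for any fixed $k>0$ eventually dominates any polynomial expression in $h$ and its first two derivatives.

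First I would prove the first estimate. Write $h_0(r) = r$ and $h_j(r) = e^{h_{j-1}(r)}$ for $1 \leq j \leq m$, so that $h = h_m$. A direct induction gives $h_j'(r) = h_j(r) \cdot h_{j-1}(r) \cdots h_1(r)$, and a second differentiation expresses $h_j''(r)$ as a finite sum of similar monomials in $h_1(r), \ldots, h_j(r)$. In particular $|h'(r)| + |h''(r)| + |h(r)|^p + r^p$ is dominated, for $r \geq 1$ say, by a polynomial in $h_1(r), \ldots, h_m(r)$. Now observe that $h_{\ell}(r) \leq (\log \circ \cdots \circ \log)(h(r))$ (with $m-\ell$ logarithms) for $r$ not too small, so the entire bound is at most $h(r) \cdot (\log h(r))^{C(m,p)}$, which is $\leq c_{k,p} e^{k h(r)}$ once $h(r)$ exceeds some threshold depending on $k$, $p$, $m$. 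For $r$ in the complementary compact set, $h, h', h''$ are bounded by constants depending only on $h$ and $p$, while $e^{kh(r)} \geq 1$, so the inequality holds by enlarging $c_{k,p}$.

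The three $h_i$-estimates follow by the chain rule. Set $s = s(y) := (y-i)^4_+$, so $h_i(y) = h(s)$ and $e^{k h_i(y)} = e^{k h(s)}$. The second inequality is then immediate from the case $p = 1$ of the first. For the third, compute $h_i'(y) = 4(y-i)_+^{3} h'(s) = 4 s^{3/4} h'(s)$, and bound each factor separately using the first inequality with $k$ replaced by $k/2$: $s^{3/4} \leq c_{k/2,3/4}\, e^{(k/2)h(s)}$ and $|h'(s)| \leq c_{k/2,1}\, e^{(k/2)h(s)}$, whose product is $\leq c_k\, e^{k h(s)} = c_k\, e^{k h_i(y)}$. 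The fourth estimate, for $y > i$, is handled identically: $h_i''(y) = 12 s^{1/2} h'(s) + 16 s^{3/2} h''(s)$, and each summand is the product of two factors each controlled by $e^{(k/2) h(s)}$ via the first inequality. Because $s$ is a function only of $y - i$, the constants produced in these bounds depend on $k$ and $h$ but not on $i$, as required.

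The only real work is the first estimate; once one has the observation that $h_j'$ and $h_j''$ are polynomial expressions in the iterates $h_1, \ldots, h_j$, the exponential bound by $e^{k h}$ is a soft calculus fact (iterated logs are dwarfed by their argument). The $h_i$-estimates then reduce, via the substitution $s = (y-i)_+^4$, to simple products of estimates of the first type with $k$ replaced by $k/2$, which is why no dependence on $i$ enters the constants.
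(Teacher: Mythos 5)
Your proposal is correct and follows essentially the same route as the paper: the three $h_i$-estimates are reduced, via the chain rule and the substitution $s=(y-i)_+^4$, to products of factors each controlled by the first estimate applied with $k$ replaced by $k/2$, and the independence of $i$ comes from $s$ being a function only of $y-i$. You also flesh out the first estimate (the induction $h_j' = h_j h_{j-1}\cdots h_1$ and the iterated-log bound on the lower iterates) more explicitly than the paper, which dispatches it in a single sentence, but the underlying fact invoked is the same.
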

\proof 
The first estimate follows from the definition of an exponential comparison function $h$ and the fact
that $|y|^q \leq c_{q,k} e^{k |y|}$ for $q \geq 0$, for some constant $c_{q,k}$.

The next estimate follows from the definition of $h_i$ and the first estimate:
 \begin{eqnarray*}
h_i(y)&& = h( (y-i)^4_+) \cr
&& \leq  c_k e^{k h( (y-i)^4_+)} \cr
&& = c_k  e^{kh_i(y)}.
\end{eqnarray*}
The third estimate may be seen as follows

\begin{eqnarray*}
|h_i'(x)| && = |4(x-i)^3_+ h'((x-i)_+^4)| \cr
&& \leq c_k |4(x-i)_+|^3|e^{k h( (x-i)_+^2)}| \cr
&& = c_k |4(x-i)_+|^3| e^{k h_i(x)}| \cr
&& \leq c_k | e^{kh((x-i)^4_+)}|| e^{k h_i(x)}| \cr
&& = c_k | e^{k h_i(x)}| |e^{k h_i(x)}|  \cr
&& = c_k |e^{2k h_i(x)}|,
\end{eqnarray*}
where we have freely used the first estimate.
Replacing $k$ by $(k/2)$ we obtain the desired estimate.
The method for estimating $(h_i)''$ is similar:
\begin{eqnarray*}
|h_i''(x)| &&=  |(  4(x-i)_+^3 h'((x-i)^4) )'| \cr
&& = |12(x-i)^2_+ h'((x-i)^4) + 16(x-i)_+^6h''((x-i)^4)| \cr
&& \leq 12(x-i)^2 |h'((x-i)^4)| + 16 |(x-i)^6 h''((x-i)^2)| \cr
&& \leq c_k |e^{k h( (x-i)^4)}|\cr
&& = c_k |e^{k h_i(x)}|.
\end{eqnarray*}

$\Box$

\begin{defi}
Let $(M,g)$ be a Riemannian manifold and let 
$f(r):= \sup_{B_r(b)} |\Riem(g)|$ for some fixed point $b \in M$.
We say that $(M,g)$ has {\it controlled geometry at infinity} if 
\begin{itemize} 
\item $f:\R_0^+ \to \R^+$ is a function
with controlled growth
\item the distance function $\rho:(M - B_R(b)) \to \R$,$\rho(x) = \dist(g)(x,b)$is smooth for some
$R>0$ and $k-concave $ there, that is $\grad^2 \rho \leq kg$ on
$(M - B_R(b))$.
\end{itemize}
\end{defi}

\begin{theo}
Let $(M,g)$ be a smooth Riemannian manifold with controlled geometry at
infinity satisfying
\begin{itemize}
\item[(b)] $\Ricci \geq -k$
\item[(c)] $\vol(B_1(x)) \geq v_0 >0$ for all $x \in M$. 
\end{itemize}
Then there exists a family of smooth Riemannian metrics $\{g_i\}$, $i \in \N$
 on $M$ satisfying
\begin{eqnarray*}
&&g_i = g \ \mbox{ for all } x \in {B_i(p_0)}, \cr
&&\Ricci(g_i) \geq -c(n,k) g_i,  \cr
&&\vol(B_1(x_0),g_i) \geq \ti v_0  \  \mbox{ for all } x  \in M, \cr 
&&\sup_M {{}^{g_i} |}\Riem(g_i)| < \infty.
\end{eqnarray*}
\end{theo}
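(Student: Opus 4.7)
The plan is a conformal deformation $g_i = e^{2u_i}g$ with $u_i\geq 0$ vanishing on $B_i(b)$ and growing fast enough outside to damp the (a priori unbounded) curvature of $g$. Lemma \ref{littely} is tailor-made for this: the derivatives of a building-block function constructed from an exponential comparison function are dominated by an arbitrarily small exponent of the function itself, so that a sufficiently large conformal factor beats both the original curvature $f(\rho)$ and the self-derivatives appearing in the transformation formulas.

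Concretely, I would fix an exponential comparison function $h$ dominating $\log(f+2)$, set $h_i(y) := h((y-i)^4_+)$, and define
\[
u_i(x) := A\,h_i(\rho(x)),
\]
where $\rho(x) = \dist(x,b)$ and $A = A(n,k,v_0,h)$ is a large constant to be fixed. By construction $u_i \equiv 0$ on $B_i(b)$, so $g_i = g$ there, and for $i \geq R$ the function $u_i$ is smooth, because $h_i$ vanishes in a neighborhood of the only set where $\rho$ might fail to be smooth, while outside $B_R(b)$ the controlled-geometry hypothesis guarantees that $\rho$ itself is smooth.

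The curvature bounds would follow from the standard conformal transformation formula
\[
\Ricci(e^{2u}g) = \Ricci(g) - (n-2)(\nabla^2 u - du\otimes du) - (\Delta u + (n-2)|\nabla u|^2)g,
\]
and its analogue for $\Riem$. Using $\nabla u_i = A\,h_i'(\rho)\nabla\rho$ and $\nabla^2 u_i = A\,h_i''(\rho)\,d\rho\otimes d\rho + A\,h_i'(\rho)\nabla^2\rho$, together with the upper bound $\nabla^2\rho \leq kg$ on $M-B_R(b)$ from $(\tilde c_1)$ and the Laplacian comparison $\Delta\rho \leq C$ arising from $\Ricci(g)\geq -k$, every error term produced by the conformal change can be estimated, with the correct sign, by $c_{k_0,A}\,e^{k_0 h_i}$ with $k_0$ as small as one likes (Lemma \ref{littely}). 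Choosing $k_0 < 1/A$ and multiplying through by the conformal factor $e^{-2u_i}$ yields both $|\Riem(g_i)|_{g_i}\leq C$ and $\Ricci(g_i) \geq -c(n,k)g_i$; the original curvature contribution $e^{-2u_i}f(\rho)$ is handled by the choice $h \geq \log(f+2)$, which makes $f(\rho) \leq e^{h_i(\rho)} \leq e^{u_i/A}$ outside the immediate vicinity of $\{\rho = i\}$.

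For non-collapsing, on $B_{i-1}(b)$ one has $g_i = g$ and the bound $\vol(B_1^{g_i}(x),g_i)\geq v_0$ is inherited from (c). On $M\setminus B_{i-1}(b)$, the uniform bound on $|\nabla u_i|_{g_i}$ derived above implies that $u_i$ varies by at most a bounded amount on $g_i$-balls of radius one, so such a ball has $g$-radius comparable to $e^{-u_i(x)}$; coupling $d\vol_{g_i} = e^{nu_i}d\vol_g$ with the Bishop--Gromov bound $\vol_g(B_r^g(x))\geq c\,r^n$ for $r\leq 1$ (valid since $\Ricci(g)\geq -k$) lets the factors $e^{nu_i}$ and $e^{-nu_i}$ cancel, leaving the uniform lower bound $\tilde v_0$. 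The main obstacle is the self-referential character of the estimates: the derivatives of $u_i$ appear in the very formulas that govern the curvature, so making $u_i$ large also generates large error terms. The construction succeeds only because Lemma \ref{littely} supplies a trade-off in which the exponent $k_0$ in $|h_i^{(\ell)}| \leq c_{k_0}e^{k_0 h_i}$ can be pushed arbitrarily small, allowing $e^{-2u_i}$ to strictly dominate.
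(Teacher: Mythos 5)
Your overall plan — a conformal deformation $g_i = (\text{large factor})\cdot g$ supported outside $B_i(b)$, using Lemma \ref{littely} to keep the derivatives of the conformal factor controlled by an arbitrarily small exponential of the factor itself, and then combining Bishop--Gromov with the pointwise volume-form comparison to preserve non-collapsing — is exactly the paper's strategy. The parametrisation $e^{2u_i}$ with a large amplitude $A$ and a free exponent $k_0<1/A$ is cosmetic; the paper works directly with $\psi = e^{\phi_i(\rho)}$ and the fixed exponents $1/8$ and $1/4$ from Lemma \ref{littely}, which already suffice.

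There is, however, a gap in your treatment of $\nabla^2\rho$, and it propagates into a wrong choice of $h$. You invoke only the one-sided information $\nabla^2\rho \leq kg$ (from $\tilde{\rm c}_1$) and $\Delta\rho \leq C$ (Laplacian comparison from $\Ricci\geq -k$). That is enough for the \emph{lower} bound on $\Ricci(g_i)$, where one only needs $\nabla^2 u_i$ bounded from above. But to conclude $\sup_M {}^{g_i}|\Riem(g_i)|<\infty$ you need a \emph{two-sided} bound on $\nabla^2\psi$, hence on $\nabla^2\rho$. The missing direction, $\nabla^2\rho \geq -(\cdots)$, comes from the Hessian/Jacobi comparison with the \emph{upper} curvature bound, and beyond the conjugate radius it degrades exponentially: the paper's Appendix A yields $|\nabla^2\rho|(x) \leq c\,\rho(x)\bigl(R_B(\rho(x)) + c(n)(k+1)\bigr)$ with $R_B(r) = r\,f(r)\,e^{(nf(r)+1)r}$, where $f(r)=\sup_{B_r}|\Riem(g)|$. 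The conformal factor must therefore dominate $R_B$, not merely $f$. This is why the paper imposes $\lim_{r\to\infty}e^{-(1/8)h(r)}R_B(r+2)=0$, i.e. $h(r)\gtrsim \log R_B(r)\sim n\,r\,f(r)$, one iterate of $\exp$ higher than your choice $h\gtrsim\log(f+2)$. With $h\sim\log f$, the term $\psi^{-3/4}|\nabla^2\rho|$ in the conformal transformation formula does not decay and $\sup_M{}^{g_i}|\Riem(g_i)|$ may fail to be finite. Replacing $\log(f+2)$ by an exponential comparison function dominating $\log R_B(\cdot+2)$ (still available when $f$ has controlled growth) closes the gap; everything else in your argument, including the non-collapsing estimate, then goes through essentially as in the paper.
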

\begin{proof}

Let $\de <<1 $ be fixed for the rest of this section.
Let $\phi = h_i$ for some $i \in \N$, where $h_i$ is 
as in Lemma \ref{littely}. Note that 
$\phi$ is a non-decreasing function $\phi:\R^+ \to \R^+ $.
Let $b$ be a fixed base point in $M$.
$\rho: M \to \R^+$ is the distance function with respect to $b$:
$\rho(x) := \dist(x,b)$.
Let $x_0$ be an arbitrary point in $M$ and set $\rho_0: = \rho(x_0)$. 
Let $\ga:[0,\rho_0] \to M$ be  a minimizing geodesic from $b$ to $x_0$ (with unit speed).
So $d(\ga(s), \ga(u)) = |u-s|$ for all $u,s \in [0,\rho_0]$.
As $M$ is complete, we may extend this smoothly to a geodesic
$\ga:[0,\infty) \to M.$ Let $r >0$ be some positive number: later we will choose $r$ to depend on $\rho_0$, but at first we
simply require $r$ to be some positive radius.
Let $y_0 := \ga(\rho_0 - (r/2) )$. In particular $d(x_0,y_0) = d(\ga(\rho_0), \ga(\rho_0 - (r/2) ) ) = 
r/2$.
Due to the triangle inequality we have:
$d(x,x_0) \leq d(x,y_0) + d(y_0,x_0) =  d(x,y_0) + (r/2)$ for all $x \in M$ and hence
\begin{eqnarray*}
B_{\de r}(y_0) \subset B_{r}(x_0).
\end{eqnarray*}
Furthermore we have $d(x,b) \leq d(x,y_0) + d(y_0,b) = d(x,y_0) + \rho_0 - (r/2)$
for all $x \in M$ and hence
\begin{eqnarray*}
B_{\de r}(y_0) \subset B_{\rho_0}(b).
\end{eqnarray*}
In particular, using this inclusion and the fact that $\phi$ is non-decreasing, we have
\begin{eqnarray}
\phi(\rho(x)) \leq \phi(\rho_0) \ \forall x \in B_{\de r}(y_0) \label{goody}.
\end{eqnarray}
Now set $r:= e^{-(1/2)\phi(\rho_0)}.$
We obtain a lower bound for $\phi(\rho(x))  - \phi(\rho_0)$ for $x \in 
B_{\de r}(y_0)$ as follows.

First note that for all $x \in B_{\de r}(y_0)$ we have by the triangle inequality
\begin{eqnarray}
\rho(x) = d(x,b) &&\geq d(b,y_0) - d(x,y_0) \cr
&&= \rho_0 -(r/2) -d(x,y_0) \cr
&&\geq \rho_0 - (r/2) -\de r \cr
&&= \rho_0 -r(1/2 + \de). \nonum \\
\label{rholower1}
\end{eqnarray}
From the mean value theorem and the fact that $\rho(x) \leq  \rho_0$
for $x \in B_{\de r}(y_0)$, we get
\begin{eqnarray*}
|\phi(\rho(x)) -  \phi(\rho_0)| &&=  |\phi'(z_x)||\rho_0 - \rho(x)|
\end{eqnarray*}
for some $z_x \in [\rho(x),\rho_o]$.
From the above (\eqref{rholower1}), we have that $\rho(x) \geq \rho_0 -r(1/2 + \de)$.
Hence,
\begin{eqnarray*}
|\phi(\rho(x)) -  \phi(\rho_0)| &&\leq  |\phi'(z_x)|r (1/2 + \de).
\end{eqnarray*}
Using the third estimate of \eqref{thehs2}, and the fact that $\phi$ is non-decreasing, we get
\begin{eqnarray}
|\phi(\rho(x)) -  \phi(\rho_0)| &&\leq  c |e^{(1/2)\phi(z_x)}| r (1/2 + \de) \cr
&& \leq c |e^{(1/2)\phi(\rho_0)}| r (1/2 + \de) \cr
&& = c(1/2 + \de), \nonum \\
\label{phirhoest}
\end{eqnarray}
in view of the definition of $r$.

Define $\ti g(x): = e^{\phi( \rho(x))} g(x)$.
Balls with respect to $\ti g$ will be denoted with a tilde:
$\ti B_s(p)$ is the ball with radius $s$ and centre $p \in M$ with respect to $\ti g$.
We denote distance with respect to $\ti g$ also with a tilde:
$\ti d(x,y) $ is the distance with respect to $\ti g$ from $x$ to $y$.
The volume form with respect to $g$ is denoted by $d\mu_g$, and that of $\ti g$ with
 $d\mu_{\ti g}$.
We wish to show that $(M,\ti g)$ is also non-collapsed.
Let $x \in M$ be given. Now set $r = e^{-(1/2)\phi(\rho_0)}.$

{\bf Claim:    $B_{\de r}(y_0) \subset \ti B_{2}(x_0) $.}
Let $\si:[0,l] \to B_{\de r}(y_0) $  be a legth minimizing 
geodesic of unit speed with respect to $g$, $l \leq \de r$, $\si(0) = y_0$,
$x = \si(l) \in B_{\de r}(y_0)$.
Then 
\begin{eqnarray*}
\ti d(y_0,x) && = 
\ti d (y_0,\si(l))\cr
  &&\leq \int_{0}^l \sqrt{\ti g (\si'(s), \si'(s))} ds  \cr
&&= \int_{0}^l e^{(1/2)\phi(\rho(\si(s)))} \sqrt{g( \si'(s), \si'(s)) }ds\cr
&&\leq e^{(1/2)\phi(\rho_0)} \int_{0}^l \sqrt{g( \si'(s), \si'(s)) }ds\cr
&&= e^{(1/2)\phi(\rho_0)} l\cr
&& \leq e^{(1/2)\phi(\rho_0)} \de r \cr
&& =  e^{(1/2)\phi(\rho_0)}  \de e^{-(1/2)\phi(\rho_0)} \cr
&&=\de \leq 1,
\end{eqnarray*}
in view of equation \eqref{goody}, the definition of $r$ and the fact that $\si$ is distance minimizing
(w.r.t. $g$).
Furthermore 
\begin{eqnarray*}
\ti d(y_0,x_0) && \leq \int_{\rho_0 - (r/2)}^{\rho_0} \sqrt{\ti g (\ga'(s), \ga'(s))} ds  \cr
&&=  \int_{\rho_0 - (r/2)}^{\rho_0}  e^{(1/2)\phi(\rho(\ga(s)))} \sqrt{g( \ga'(s), \ga'(s)) }ds\cr
&&\leq e^{(1/2)\phi(\rho_0)} \int_{\rho_0 - (r/2)}^{\rho_0} \sqrt{g( \ga'(s), \ga'(s)) }ds\cr
&&= e^{(1/2)\phi(\rho_0)} (r/2)\cr
&& = (1/2) e^{(1/2)\phi(\rho_0)}  e^{-(1/2)\phi(\rho_0)} \cr
&&= 1/2,
\end{eqnarray*}
in view of the definition of $r$ and the fact that $\ga$ is distance minimizing.
Hence
\begin{eqnarray*}
\ti d(x_0,x) && \leq \ti d(x_0,y_0) + \ti d(x,y_0) \cr
&& \leq 3/2
\end{eqnarray*}
for all $x \in B_{\de r}(y_0)$, which proves the claim.

This means that  
\begin{eqnarray*}
\ti \vol(\ti B_2( x_0)) && = \int_{ \ti B_{2}(x_0)} d\mu_{\ti g}(x)\cr 
&& \geq \int_{B_{\de r}(y_0) }   d\mu_{\ti g}(x)\cr
&& =  \int_{B_{\de r}(y_0) } e^{(n/2)\phi(\rho(x))} d\mu_g(x) \cr
&& \geq  \int_{B_{\de r}(y_0) } e^{(n/2)(\phi(\rho_0) - c(1/2 + \de) ) } d\mu_g(x) \cr
&& = e^{-(n/2)c(1/2 + \de) } e^{(n/2)(\phi(\rho_0))} \int_{B_{\de r}(y_0) } d\mu_g(x) 
\end{eqnarray*}
in view of the claim and \eqref{phirhoest} .
Hence
\begin{eqnarray}
 \ti \vol(\ti B_2( x_0)) && \geq \ti c e^{(n/2)\phi(\rho_0)} \int_{B_{\de r}(y_0) } d\mu_g(x)\cr
&& \geq \ti c e^{(n/2)\phi(\rho_0)} r^n \de^n  v_0 = \ti c v_0 \de^n =: \ti v_0, \label{volumy}
\end{eqnarray}
in view of the non-collapsed condition and the definition of $r = e^{-(1/2)\phi(\rho_0)}$.
Note: $\ti c = e^{-(n/2)c(1/2 + \de) } >0$ is a universal constant which depends only on 
$n$ and $h$ (the exponential comparison function which was used to define $\phi$).

The well known formulas for the change of the metric $g$ to $\ti g = e^{f} g = \psi g$
(for example see equation 13 in Chapter 8 of \cite{Sh}) 
for a function $f:M \to \R$ (where here $\psi$ is defined to be
$\psi(x) := e^f(x)$)
 are 
\begin{eqnarray}
  \ti \Ricci_{ij} =&& \Ricci_{ij} -{(n-2)}{ 2} (\grad^2 f)_{ij} + \frac{n-2}{4} \grad_i f   \grad_j f  
- {1 \on 2} ( \lap f -\frac{n-2}{2} {{}^{g} |}\grad f|^2) g_{ij}, \cr
\ti \Riem_{ijkl}  = && \psi \Riem_{ijkl} + {1 \on 2}(g_{jk} (\grad^2 \psi)_{il}
- g_{jl} (\grad^2 \psi)_{ik} - g_{ik}  (\grad^2 \psi)_{jl} + g_{il}  (\grad^2 \psi)_{jk})\cr
&& +{3 \on 4 \psi}( \ \ \ g_{ik} \grad_j \psi \grad_l \psi - g_{jk} \grad_i \psi \grad_l \psi \cr
&& \ \ \  \ \ \  \ \  +  \  g_{jl} \grad_i \psi \grad_k \psi - g_{il} \grad_j \psi \grad_k \psi) \cr
&&+ {1 \on 4 \psi}( g_{jk} g_{il} - g_{ik} g_{jl} )g^{pq} \grad_p \psi 
\grad_q \psi, \nonum \\
\label{shicurv}
\end{eqnarray} 
where $\grad l$ denotes the gradient of the function $l$, and $(\grad^2 l)$ denotes the second covariant derivative of $f$ (which is a $(^0_2)$ Tensor), 
both w.r.t to $g$.
In the following ${{}^g|} \cdot |$ will denote the norm with respect to $g$. 
Now let $f$ be
$f(x) = \phi(\rho(x))$ (this implies $\psi (x) = e^{f(x)} = e^{\phi(\rho(x))}$ ) where 
$\rho$ is the distance  function with respect to $g$, and 
$\phi: \R \to \R$ is an arbitrary  smooth function.
Our assumption of {\it controlled geometry at infinity}
implies that
$^g|\grad \rho | = 1$ on $M-B_R(b)$  
and one version of the Hessian comparison principle tells us that 
\begin{eqnarray*}
{}^{g}|\grad^2 \rho|(x) && \leq  c \rho(x)( R_B(\rho(x)) +c(n)(k + 1)),
\end{eqnarray*} 
wherever $\rho$ is differentiable and larger than one (see Appendix A )
and here $R_B: \R^+ \to \R$ is the function
\begin{eqnarray}
R_B(r) := r  (\sup_{B_r(x_0)} |\Riem(g)|) e^{(n \sup_{B_r(x_0)}
|\Riem(g)| + 1)r}. \label{RBr}
\end{eqnarray}

The following identities then follow from the definitions of 
$f(x) = \phi(\rho(x))$,  and $\psi = e^{\phi(\rho)} = e^f$:

\begin{eqnarray}
\grad_i f (x) = &&  \phi'(\rho(x)) \grad_i\rho(x), \cr
\grad_i  \psi (x) = &&   e^f(x)( \grad_i f)(x) =   \psi(x)  \phi'(\rho(x))
\grad_i\rho(x), \cr
(\grad^2 f)_{ij}(x) =  &&  \phi''(\rho(x)) \grad_i\rho(x)  \grad_j \rho(x)  +  \phi'(\rho(x))(\grad^2 \rho)_{ij}(x),\cr
(\grad^2 \psi)_{ij}(x) =  &&  \psi(x)  |\phi'(\rho(x))|^2  \grad_i\rho(x) \grad_j \rho(x) + \psi(x)  \phi''(\rho(x)) \grad_j\rho(x)\grad_i\rho(x)  \cr
&& + \psi(x)  \phi'(\rho(x)) (\grad^2 \rho)_{ij}(x). \nonum 
\\ \label{gradeq}
\end{eqnarray}
Assume that $\phi$ satisfies 
\begin{eqnarray}
|\phi'|&&\leq c e^{\phi/8} \cr 
|\phi'|^2&& \leq c e^{\phi/4} \cr
|\phi''|&& \leq c e^{\phi/8} \nonum \\
\label{thephis}
\end{eqnarray}
for some universal constant $c$ not depending on $k$
and $n$ (later we will examine different $\phi$'s but they all satisfy an estimate
of the form above for the same constant $c$).
Using $|\grad \rho |^2 = 1,$ \eqref{gradeq}, \eqref{thephis} and that $\rho$
is $k$- concave, we get
\begin{eqnarray*}
^g|\grad f| && \leq ce^{f/8},  \cr
(\grad^2 f) && \leq c(n,k)e^{f/8} g.
\end{eqnarray*}

Hence 
\begin{eqnarray}
  \ti \Ricci_{ij} = && \Ricci_{ij} -{(n-2)}{ 2} (\grad^2 f)_{ij} + \frac{n-2}{4} \grad_i f   \grad_j f  
- {1 \on 2} ( \lap f -\frac{n-2}{2} {{}^{g} |}\grad f|^2) g_{ij} \cr
&& \geq -|k| g_{ij} - c(n,k)e^{f/3} g_{ij}   \cr
&& \geq -c(n,k) \ti g_{ij}, \nonum \\
\label{riccity}
\end{eqnarray}
since $g_{ij} = e^{-f}{\ti g}_{ij}$ and $f>0$.

We will assume in the following that
\begin{eqnarray}
\lim_{r \to \infty} e^{-(1/8)\phi(r)} R_B(r+2) = 0, \label{assumpty}
\end{eqnarray}
where $R_B(r)$ is the function introduced above in \ref{RBr}. 
We estimate the equalities \eqref{gradeq} using the growth properties of $\phi$
(\eqref{thephis})
as follows
\begin{eqnarray}
{}^g|\grad \psi (x))|^2 \leq &&   c\psi^{9/4},  \cr
{}^g |\grad^2 \psi|(x) \leq  && 
 \psi^{5/4}(x)  c(\rho (x)+2) ( R_B(\rho(x) + 2) +c(n)(k + 1)).  \nonum \\ \label{gradeq2}
\end{eqnarray}
Hence, using formula \eqref {shicurv}, we get
\begin{eqnarray}
^{\ti g}|\ti \Riem|  \leq && {1 \on \psi} |\Riem| + c \psi^{-2} \psi^{5/4}(x)  (\rho (x)+2) ( R_B(\rho(x)+2) +c(n)(k + 1)) \cr
 && + c \psi^{-3} (  \psi^{9/4} ) \cr
&& \leq  {1 \on \psi} |\Riem|  + c \psi^{-3/4}  (\rho (x)+2) ( R_B(\rho(x) + 2) +c(n)(k + 1))\cr
&& \to 0 \ \  \mbox{ as } \ \ \rho(x) \to \infty, \nonum \\
\label{curvzero}
\end{eqnarray} 
in view of
\eqref{assumpty} and the fact that $\psi(x) = e^{\phi(\rho(x))}$.

Choose $\phi = \phi_i ,$ where $\phi_i(r):= h((r-i)_+^4)$ and 
$h$ is an exponential
comparison function such that  
\begin{eqnarray*}
\lim_{r \to \infty} e^{-(1/8)h(r)} R_B(r+2) = 0. 
\end{eqnarray*}
Then trivially 
\begin{eqnarray}
\lim_{r \to \infty} e^{-(1/8)\phi_i(r)} R_B(r+2) = 0. \label{assumpty2}
\end{eqnarray}
Note that $\phi_i$ satisfies
\begin{eqnarray}
|\phi_i'|&&\leq c e^{\phi/8} \cr 
|\phi_i'|^2&&\leq c e^{\phi/4} \cr
|\phi_i''|&&\leq c e^{\phi/8}, \nonum \\ 
\label{thephis2}
\end{eqnarray}
as demanded in \eqref{thephis},
in view of Lemma \ref{littely}.
That is, $\phi = \phi_i$ satisfies all the required conditions of this section.
This, \ref{volumy}, \ref{riccity} and \ref{curvzero} imply that
$g_i(x) := e^{\phi_i(\rho(x))}g(x)$ is a metric satisfying
\begin{eqnarray*}
&&g_i = g \ \mbox{ for all } x \in {B_{(i/2)}(p_0)}, \cr
&&\Ricci(g_i) \geq -c(n,k) g_i,  \cr
&&\vol(B_1(x_0),g_i) \geq \ti v_0,  \  \mbox{ for all } x  \in M \cr 
&&\sup_M {{}^{g_i} |}\Riem(g_i)| < \infty,
\end{eqnarray*}
as required.
\end{proof}

\section{ Applications}

Let $(M,g_0) \in \curlT(3,k,m,v_0)$ and let $(M,\upi g_0 ) \in 
\curlTi(3,k,\ti v_0)$ be the smooth metrics constructed in the previous 
section : remember that these $\upi g_0$ satisfy
$\upi g_0 = g_0 \ \mbox{ for all } x \in {B_i(p_0)} $.

Now we may apply Theorem \ref{maintheo} to each $(M,\upi g_0 )$ to obtain 
solutions $(M,g_i(t))_{t \in [0,T(n,\ti v_0))}\in \curlTi(3,\ti k, \ti v_0)$ 
satisfying the a priori estimates. Hence using the local estimates of Theorem 1.3 of \cite{Si5}
and the interior estimates of Shi (see \cite{CCCY}), 
we may take a Hamilton limit to get a solution to Ricci flow 
$(M,g(t)_{t \in [0,T)})$ which satisfies the a priori estimates \eqref{theimes}.
Note that the local estimates of Theorem 1.3 in \cite{Si5} 
guarantee that we may take the limit on the interval $[0,T)$ and
not just $(0,T)$.
So we have proved:

\begin{theo}\label{maintheo2}
Let $(M, g_0)$ be a three (or two) manifold in
$\curlT(3,k,m,v_0)$ ($\curlT(2,k,m,v_0)$).
Then there exists a $T = T(v_0,k,m)>0$ and a solution
$(M,g(t))_{t \in [0,T)}$ to Ricci flow, 
satisfying  \eqref{theimes}.
\end{theo}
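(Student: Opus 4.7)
The plan is to reduce the statement for $\curlT(3,k,m,v_0)$ to the statement already proved for $\curlTi(3,\ti k,\ti v_0)$ in Theorem \ref{maintheo}, via the conformal approximation of the previous section followed by a Hamilton-type pointwise limit of the resulting Ricci flows. The key observation is that because the approximating metrics $\upi g_0$ agree with $g_0$ on $B_i(p_0)$ (with $i\to\infty$), any reasonable limit of the flows $\upi g(t)$ will recover $g_0$ as $t\to 0$ on larger and larger balls.

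First I would invoke the main theorem of Section 8 to produce, for each $i\in\N$, a metric $\upi g_0\in \curlTi(3,\ti k,\ti v_0)$ on $M$ with $\ti k=\ti k(n,k)$ and $\ti v_0=\ti v_0(n,k,v_0)$ depending only on the data of $(M,g_0)$ via the function $h$ governing controlled growth at infinity (so in particular through $m$), satisfying $\upi g_0=g_0$ on $B_i(p_0)$. Then Theorem \ref{maintheo} applies to each $\upi g_0$ and produces a smooth solution $(M,\upi g(t))_{t\in[0,T)}$ to Ricci flow with the uniform a priori estimates $(a_t),(b_t),(c_t),(d_t)$ (with common constants $T=T(v_0,k,m)>0$ and $K=K(v_0,k,m)$), since the constants in Theorem \ref{maintheo} depend only on $\ti v_0,\ti k$.

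Next I would extract a Hamilton limit. On any compact space-time region of the form $A\times[s,T')$ with $s>0$ and $T'<T$, the uniform bound $|\Riem(\upi g(t))|\le K^2/t$, together with Shi's interior derivative estimates (as tabulated in \cite{CCCY}), provides uniform $C^\infty$ bounds on $\upi g(t)$. The non-collapsing $(b_t)$ together with the upper curvature bound $K^2/s$ at time $s$ gives a uniform lower injectivity radius bound at $t=s$ via the Cheeger--Gromov--Taylor theorem used in the proof of Lemma \ref{globaltimelemma}. The standard compactness theorem of Hamilton \cite{Ha95a}, applied with basepoint $p_0$ on the time interval $[s,T')$ and then a diagonal argument as $s\searrow 0$ and $T'\upto T$, yields a subsequential limit $(M_\infty,g(t),p_\infty)_{t\in(0,T)}$ of Ricci flows; on each fixed ball of $M$ the approximating metrics $\upi g_0$ coincide with $g_0$ once $i$ is large, so the underlying manifold $M_\infty$ can be identified with $M$ and $p_\infty$ with $p_0$. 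The estimates $(a_t),(b_t),(c_t)$ pass to the limit since they are preserved under smooth convergence, and $(d_t)$ passes to the limit by Lemma \ref{diamlemm} applied to the limit flow (or directly by continuity of the bounds in $i$).

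The main obstacle is showing that $g(t)\to g_0$ as $t\searrow 0$ in a sense strong enough to justify regarding the limit as a Ricci flow with initial value $g_0$, rather than merely a flow on the open interval $(0,T)$. This is where the local stability estimates of Theorem 1.3 in \cite{Si5} are essential: they furnish, for each fixed compact $K\subset M$, uniform $C^0$-closeness of $\upi g(t)$ to $\upi g_0=g_0|_K$ on $K$ for small $t$, together with higher-order bounds degenerating only like $1/t$. Combined with the uniform agreement of initial data on $B_i(p_0)$, this lets the Hamilton limit be extended down to $t=0$, producing the solution $(M,g(t))_{t\in[0,T)}$ with $g(0)=g_0$ satisfying \eqref{theimes}, completing the proof.
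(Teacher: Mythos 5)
Your proposal is correct and follows essentially the same route as the paper: conformally approximate $g_0$ by $\upi g_0 \in \curlTi(3,\ti k,\ti v_0)$ agreeing with $g_0$ on $B_i(p_0)$ (Section 8), flow each by Theorem \ref{maintheo} to get uniform estimates \eqref{theimes}, then take a Hamilton limit using Shi's interior estimates and the local estimates of Theorem 1.3 in \cite{Si5}, the latter being what lets the limit attain $g_0$ at $t=0$. You spell out the injectivity-radius and compactness steps that the paper leaves implicit, but the argument and all key ingredients are identical.
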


In a more general setting we prove the following.

\begin{theo}\label{realmaintheo2}
Let $(M_i, \upi g_0)$ be a sequence of three (or two) manifolds in
$\curlT(3,k,m,v_0)$ ($\curlT(2,k,m,v_0)$) and let
$(X,d_X,x) = \lim_{i \to \infty} (M_i, d(\upi g_0), x_i)$ 
be a pointed Gromov-Hausdorff limit of this sequence.
Let $(M_i,\upig(t))_{t \in [0,T)}$ be the solutions to Ricci-flow 
coming from the theorem above. Then (after taking a sub-sequence if necessary)
there exists a Hamilton limit solution $(M,g(t),y)_{t \in (0,T)} := \lim_{i \to \infty}
(M_i,\upig(t),x_i)_{t \in (0,T)}$ satisfying  \eqref{theimes} and
\begin{itemize}
\item[(i)] $(M,d(g(t)),y) \to (X,d_X,x)$ in the Gromov-Hausdorff sense
as $t \to 0$.
\item[(ii)] $M$ is diffeomorphic to $X$. In particular, $X$ is a manifold.
\end{itemize}

\end{theo}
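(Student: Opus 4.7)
\medskip
\noindent\textbf{Proof plan.} The plan is to extract a pointed Hamilton limit $(M,g(t),y)_{t\in (0,T)}$ of the sequence $(M_i,{}^i g(t),x_i)_{t\in[0,T)}$ supplied by Theorem \ref{maintheo2}, check that the a priori bounds $(\mathrm{a}_t)$--$(\mathrm{d}_t)$ survive the limit, then establish (i) by a triangle-inequality bookkeeping, and finally deduce (ii) by identifying $X$ with the limit-as-$t\searrow 0$ distance on the smooth manifold $M$.

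\smallskip
\noindent\emph{Hamilton limit.} Fix $t_0\in(0,T)$. Estimate $(\mathrm{c}_t)$ at $t=t_0$ gives $|\Riem({}^i g(t_0))|\le K^2/t_0$ uniformly in $i$, while $(\mathrm{b}_t)$ gives $\vol(B_1(x,{}^i g(t_0)))\ge v_0/2$. By the Cheeger--Gromov--Taylor estimate invoked in the proof of Lemma~\ref{globaltimelemma}, this yields a uniform positive lower bound on $\inj(x_i,{}^i g(t_0))$. Hamilton's compactness theorem \cite{Ha95a} applied on $[t_0,T)$ then produces, after passing to a subsequence, a smooth limit Ricci flow $(M,g(t),y)_{t\in[t_0,T)}$ together with diffeomorphisms $\varphi_i\colon K_i\to M_i$ of compact exhaustions $K_i\subset M$ along which ${}^i g(t)$ converges in $C^\infty_{\mathrm{loc}}$ to $g(t)$. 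A diagonal procedure as $t_0\searrow 0$ yields the flow on $(0,T)$. The pointwise bounds $(\mathrm{a}_t)$--$(\mathrm{c}_t)$ pass to such a smooth limit, and $(\mathrm{d}_t)$ passes to the limit because $d({}^i g(t))(\varphi_i(p),\varphi_i(q))\to d(g(t))(p,q)$ for $p,q\in K_i$.

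\smallskip
\noindent\emph{Part (i).} Applying $(\mathrm{d}_t)$ to ${}^i g$ and sending $s\to 0$ inside a fixed ball yields
\[
|d({}^i g(t))(p,q) - d({}^i g_0)(p,q)|\le C(K)\sqrt{t},
\]
uniformly in $i$. I would then estimate, in the pointed sense on balls of fixed radius $R$,
\begin{align*}
\GHd\bigl((M,d(g(t)),y),(X,d_X,x)\bigr)
 &\le \GHd\bigl((M,d(g(t)),y),(M_i,d({}^i g(t)),x_i)\bigr)\\
 &\quad + \GHd\bigl((M_i,d({}^i g(t)),x_i),(M_i,d({}^i g_0),x_i)\bigr)\\
 &\quad + \GHd\bigl((M_i,d({}^i g_0),x_i),(X,d_X,x)\bigr).
\end{align*}
For fixed $t>0$ the first summand tends to $0$ as $i\to\infty$ by the smooth Hamilton convergence from Step~1, the third summand tends to $0$ as $i\to\infty$ by hypothesis, and the middle summand is at most $C(K)\sqrt t$ \emph{uniformly in $i$}. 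Sending $i\to\infty$ first and then $t\searrow 0$ proves (i).

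\smallskip
\noindent\emph{Part (ii) and main obstacle.} From $(\mathrm{d}_t)$ the function $t\mapsto d(g(t))(p,q)+K^2\sqrt t$ is non-decreasing in $t$, so $d_0(p,q):=\lim_{t\searrow 0} d(g(t))(p,q)$ exists and $d(g(t))\to d_0$ uniformly on bounded subsets of $M\times M$. Each $(M,d(g(t)))$ is a complete length space, hence so is $(M,d_0)$; the bi-Lipschitz equivalence of $d_0$ with $d(g(t_0))$ on each bounded set (again by $(\mathrm{d}_t)$) shows $(M,d_0)$ is proper and that $d_0$ induces the underlying smooth topology of $M$. By (i), $(M,d(g(t)),y)\to(X,d_X,x)$; uniqueness of pointed Gromov--Hausdorff limits of proper length spaces then gives an isometry $(M,d_0,y)\cong(X,d_X,x)$. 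Transporting the smooth atlas of $M$ along this isometry turns $X$ into a smooth $3$-manifold diffeomorphic to $M$. The main obstacle lies in Part (i): controlling the middle GH term uniformly in $i$ rests on the distance distortion estimate $(\mathrm{d}_t)$ with a common constant $K=K(v_0,k,m)$, which is precisely what Theorem~\ref{maintheo2} (through Lemmas~\ref{globaltimelemma}, \ref{seclemma2} and Corollary~\ref{volcorr}) is designed to supply.
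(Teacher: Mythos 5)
Your proposal follows essentially the same route as the paper: Hamilton limit via the CGT injectivity-radius bound, part (i) by the same three-term Gromov--Hausdorff triangle inequality with the middle term controlled uniformly in $i$ by $(\mathrm{d}_t)$, and part (ii) by passing $d(g(t))\to d_0$ as $t\searrow 0$ and identifying $(M,d_0,y)$ with $(X,d_X,x)$. The one thing to fix is the phrase ``bi-Lipschitz equivalence of $d_0$ with $d(g(t_0))$'': $(\mathrm{d}_t)$ only yields a one-sided multiplicative bound $e^{-K^2 t_0}\,d(g(t_0))\le d_0$, while the other side is the additive bound $d_0\le d(g(t_0))+K^2\sqrt{t_0}$, which does not give a Lipschitz constant for the identity $(M,d(g(t_0)))\to (M,d_0)$. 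The topological conclusion you draw (that $d_0$ is proper and induces the manifold topology) is still correct, but the correct justification is the ball inclusion $\up{d(g(t))}B_{\,r-K^2\sqrt{t}}(p)\subset \up{d_0}B_r(p)\subset \up{d(g(t))}B_{\,re^{K^2 t}}(p)$ with $t$ chosen pointwise (so that $r-K^2\sqrt{t}>0$), which is exactly what the paper records as \eqref{openest}; under that replacement your argument goes through unchanged.
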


\begin{proof}

We apply the Theorem \ref{maintheo2} to obtain (after taking a subsequence if necessary) 
a limit solution
$(M,g(t),y)_{t \in (0,T)} := \lim_{i \to \infty}
(M_i,\upig(t),x_i)_{t \in (0,T)}$ satisfying  the estimates \eqref{theimes}.
We prove that $(M,d(g(t)),y) \to (X,d_X,x)$ as $t \to 0$ as follows.
We introduce the notation $d(t) = d(g(t))$ and $d_i(t) = d(g_i(t))$.
In view of the Lemma \ref{diamlemm} 
we have $\GHd((B_r(x_i),d_i(t)),(B_r(x_i),d_i(0))) \leq c(r,t)$
where $c(r,t) \to 0 $ as $t \to 0$ and $c(r,t)$ does not depend on $i$.
Furthermore $\GHd( (B_r(x_i),d_i(0)), (B_r(x),d_X))  \leq l(i,r)$
where $l(i,r) \to 0$ as $i \to \infty$,
and  $\GHd( (B_r(x_i),d_i(t)), (B_r(y),d(t)) )  \leq s(i,r,t)$
where $s(i,r,t) \to 0$ as $i \to \infty,$
in view of the fact that $(M,d_i(t),x_i) \to (M,d(t),y)$ and
$(M_i,d_i(0),x_i) \to (X,d_X,x)$ in GH sense
as $i \to \infty$.
Hence, since Gromov Hausdorff distance satisfies the triangle inequality, we obtain 
(for $r$ fixed):
\begin{eqnarray*}
 &&\GHd((B_r(y),d(t)),(B_r(x),d_X))  \cr
 &&\leq \GHd((B_r(y),d(t)), (B_r(x_i),d_i(t)) ) + 
\GHd((B_r(x),d_X),(B_r(x_i),d_i(t)))  \cr
&&\leq  \GHd((B_r(y),d(t)), (B_r(x_i),d_i(t)) )  
+ \GHd((B_r(x),d_X),(B_r(x_i),d_i(0)))\cr
&& \ \ \ +  \GHd(  (B_r(x_i),d_i(t)),(B_r(x_i),d_i(0))  ) \cr
&& \leq  s(i,r,t) + l(i,r) +  c(r,t).
\end{eqnarray*}
Letting $i \to \infty$ ($t$ and $r$ fixed), 
we get
\begin{eqnarray*}
\GHd((B_r(y),d(t)),(B_r(x),d_X)) \leq c(r,t),
\end{eqnarray*}
in view of the properties of $ s(i,r,t)$ and $l(i,r)$, and hence
$(M,d(g(t)),y) \to (X,d_X,x)$ as $t \to 0$ since
$c(r,t) \to 0$ as $t \to 0$.

Finally we show that $(M,d(t),y)$ is diffeomorphic to $(X,d,x)$ for all $t \in (0,T)$.
The limit solution satisfies the estimates \eqref{theimes}. 
So $d(t_i)(p,q)$ is a Cauchy sequence in $i$ for any sequence $t_i \to 0$.
In particular, we obtain a limit as $i \to \infty$:
let us call this limit $l(p,q)$.
Clearly $l(p,q)$ does not depend on the sequence
$t_i$ we choose.
$l(\cdot,\cdot)$ satisfies the triangle inequality, as $d(t)(\cdot,\cdot)$ does,
for all $t>0$. Also
$l(p,p) = \lim_{t \to 0} d(p,p,t) = 0.$
Furthermore: $l(p,q) >0$ for all $p \neq q$:
\begin{eqnarray}
l(p,q) &&= \lim_{s \to 0}d(s)(p,q)\cr
&&\geq  \lim_{s \to 0}e^{c_1(c_0,n)(s-1)}  d(1)(p,q) \cr
&&= e^{-c_1(c_0,n)}  d(1)(p,q) >0. \label{dissy} 
\end{eqnarray}
That is, $l$ is a metric.
From the above estimates \eqref{dissy}, we see that 
$d(t)(\cdot,\cdot) \to 
l(\cdot,\cdot)$ as $t \to 0$ uniformly on compact sets $K \subset M$ (compact with respect 
to $d(t)$ for any $t$) .
This implies that $(M,d(t),y) \to (M,l,y)$ as $t \to 0$ in the $C^0$ sense on compact sets.
 
Now we show that the metric $l$ defined on the set $M$ defines the same
topology as that of $(M,d(t))$ for any $t$.
First note that all of the $(M,d(t))$ for $t>0$ have the same topology:
$(M,g(t))$ are smooth Riemannian metrics with bounded curvature evolving
by Ricci flow and are all equivalent. Let us denote this topology by
$\curlO$.
We denote the topology coming from $(M,l)$ by
$\ti \curlO$.

We use the notation ${\upl B}_r(x)$ to denote a ball of radius $r>0$
at $x \in M$ with respect to the metric $l$, and (as usual)
 ${\up{d(t)}  B}_r(x)$ to denote a Ball of radius $r>0$ at $x \in M$
with respect to the metric $d(t)$.
From the above inequalities and the definition of $l$ we have
\begin{eqnarray}\label{openest}
{ \up{d(t)} B}_{(r - c_2\sqrt{t})}(p) \subset {\upl B}_{r}(p)
\subset {\up{d(t)}  B}_{re^{c_1t}}(p) 
\end{eqnarray}
for all $p \in M$.

For any open set $U$ in $\ti \curlO$ we therefore have
\begin{eqnarray*}
U && = \cup_{p \in U}{\upl B}_{r(p)}(p) \cr
&& = \cup_{p \in U}{\up{d(t(p))} B}_{(r(p)-c_2 \sqrt{t(p)})} (p), \cr
\end{eqnarray*}
where $r(p)>0$ is chosen small so that
${\upl B}_{r(p)}(p) \subset U$ and
$t(p)>0$ is chosen small so that $ r(p)-c_2 \sqrt{t(p)} >0$.
Hence,
$U$ is in $\curlO$.
Now assume $V \in \curlO$. 
Then, using the estimate \ref{openest} again, we see that
\begin{eqnarray*}
V &&= \cup_{p \in V}{{}^{d(t)} B}_{r(p,t)}(p) \cr
&& = \cup_{p \in V}{\upl B}_{r(p,t)e^{-c_1 t} }(p), \cr
\end{eqnarray*}
where $r(p,t)$ is chosen small so that
${ {}^{d(t)}B}_{r(p,t)}(p) \subset V.$ 
Hence  $V \in \ti \curlO$. 
Hence, the identity from $(M,l,y)$ to $(M,d(t),y)$ is
a homeomorphism. 
We already showed that $(M,d(t),y) \to (X,d_X,x)$ as $t \to 0$ 
($(X,d_X,x)$ was defined by $(X,d_X,x) := \lim_{i \to \infty} (M_i, d_i(0), x_i)$).
Hence $(X,d_X,x) = (M,l,y)$, and $(X,d_X,x)$ is homeomorphic to $(M,d(t),y)$.
In three dimensions every manifold has a unique smooth maximal structure.
This finishes the proof.
\end{proof}

We formulate the last result of the theorem  above in a form independent of the
Ricci flow.
\begin{prop}
Let $(M_i,g_i)$ be a sequence of smooth 3-manifolds (2-manifolds)
in $\curlT(3,k,m,v_0)$ ($\curlT(2,k,m,v_0)$) and $(X,d_X,x)$ be a Gromov-Hausdorff limit
of $(M_i,d(g_i),x_i)$ (such a  $(X,d_X,x)$ always exists after taking a subsequence). Then 
\begin{itemize}
\item $(X,d_X)$ is a manifold.
\item if $\diam(M_i,g_i) \leq d_0 < \infty$ for all $i \in N$, then
$M_i $ is diffeomorphic to $X$ for $i \in \N $ sufficiently large.
\end{itemize}
\end{prop}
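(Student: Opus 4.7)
The plan is to deduce the proposition directly from Theorem \ref{realmaintheo2} together with standard compactness theorems. First I would apply Theorem \ref{realmaintheo2} to the given sequence $(M_i,g_i)$ (after passing to a subsequence so that the pointed GH limit $(X,d_X,x)$ exists) to produce a Hamilton limit Ricci flow $(M,g(t),y)_{t \in (0,T)}$ with $(M,d(g(t)),y) \to (X,d_X,x)$ in the pointed Gromov-Hausdorff sense as $t \searrow 0$, and with $M$ diffeomorphic to $X$. Part (i) of the proposition is then immediate: $X$ carries the smooth manifold structure transported via this diffeomorphism.

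For part (ii), the extra input is the uniform diameter bound $\diam(M_i,g_i) \leq d_0$. By estimate $(d_t)$ of Theorem \ref{realmaintheo2}, applied uniformly in $i$, we get
\[
\diam(M_i,g_i(t)) \leq e^{K^2 t}\, d_0 \leq C(d_0,K) \quad \text{for every } t \in (0,T).
\]
Fix some small $t_0 \in (0,T)$. Then the sequence $(M_i, g_i(t_0))$ is a family of smooth closed Riemannian $3$-manifolds with uniform diameter bound $C(d_0,K)$, uniform curvature bound $|\Riem(g_i(t_0))| \leq K^2/t_0$ from $(c_t)$, and uniform lower volume bound $\vol(B_1(x,t_0)) \geq v_0/2$ from $(b_t)$, which gives a uniform injectivity radius bound via Cheeger--Gromov--Taylor.

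With these bounds in place, Cheeger--Gromov (or Hamilton) compactness applies: after passing to a further subsequence, there exist smooth diffeomorphisms $\phi_i : M \to M_i$ (defined on all of $M$ since diameters are uniformly bounded, so no basepoint escape occurs) such that $\phi_i^* g_i(t_0) \to g(t_0)$ smoothly on $M$. In particular, $M_i$ is diffeomorphic to $M$ for all sufficiently large $i$, and composing with the diffeomorphism $M \cong X$ from part (i) gives $M_i \cong X$ for $i$ large.

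The only step that could present a genuine obstacle is verifying that the Hamilton-limit diffeomorphisms $\phi_i$ can indeed be taken to be defined on the entire manifold $M$ rather than on exhausting relatively compact subsets; this is precisely where the diameter bound is used, and it rules out the possibility that $M$ has ends escaping to infinity under the limit. Everything else is an assembly of results already proved earlier in the paper or standard in the Ricci flow literature, so I expect no other serious technical difficulty.
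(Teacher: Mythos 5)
Your proof is correct and matches the argument the paper intends: the Proposition is introduced as a reformulation of Theorem \ref{realmaintheo2}, and you make the reduction explicit, citing the theorem for part (i) and adding the standard Cheeger--Gromov compactness step at a fixed time $t_0>0$ (using $(b_t)$, $(c_t)$, $(d_t)$ and the uniform diameter bound to get uniform injectivity radius and compactness) for part (ii). The only point you leave implicit is the upgrade from ``for a further subsequence'' to ``for all $i$ sufficiently large'': this follows by the usual argument that every subsequence of $(M_i,g_i(t_0))$ has a Cheeger--Gromov convergent sub-subsequence whose limit must be diffeomorphic to $X$ (since it must GH-converge to $(X,d_X)$ as $t_0\to 0$, by the argument in Theorem \ref{realmaintheo2}), so at most finitely many $M_i$ can fail to be diffeomorphic to $X$.
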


As a corollary to this result and Theorem \ref{realmaintheo2} and Lemma \ref{seclemma2}
we obtain the following corollary.

\begin{coro}\label{semicoro}
Let $(M_i, {\upi g}_0)$,$i \in \N$  be a sequence of three (or two) manifolds
with $ (M_i, {\upi g}_0) \in \curlT(3,-{1 \on i},m,v_0)$ 
($\curlT(2,-{1 \on i},m,v_0))$
for each 
$i \in \N$: note this implies
$$\Ricci(M_i,{\upi g}_0) \geq - \frac 1 i .$$
Let $(X,d_X) = \GHlim_{i \to \infty}(M_i,d(  {\upi g}_0) ).$
Then the solution 
$(M,g(t),x)_{t \in (0,T)}$ obtained in Theorem \ref{maintheo2}
satisfies $$ \Ricci(g(t)) \geq 0$$ for all $t \in (0,T)$ and $(X,d_X)$ is
diffeomorphic to $(M,g(t))$  for all $t \in (0,T)$.
In particular, combining this with the results of Shi \cite{Sh2}  and Hamilton \cite{Ha82}, 
we get that $(X,d_X)$ is diffeomorphic to  $\R^3$, $\Sphere^2 \times \R$ or $\Sphere^3$ 
modulo a  group of fixed point free isometries in the
standard metric.
\end{coro}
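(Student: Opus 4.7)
The plan is to combine Theorem \ref{realmaintheo2} with Lemma \ref{seclemma2} as follows. First, I would apply Theorem \ref{realmaintheo2} to the sequence $(M_i, \upi g_0)$, which (after passing to a subsequence) produces a Hamilton limit Ricci flow $(M, g(t), y)_{t \in (0,T)}$ satisfying the a priori estimates \eqref{theimes}, with $(M, d(g(t)), y) \to (X, d_X, x)$ in the pointed Gromov-Hausdorff sense as $t \to 0$ and $M$ diffeomorphic to $X$. This immediately gives the claimed diffeomorphism between $(X, d_X)$ and $(M, g(t))$ for every $t \in (0,T)$; it only remains to upgrade $\Ricci \geq -K^2$ coming from \at{} to $\Ricci \geq 0$, and then to apply the cited classification.

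To establish $\Ricci(g(t)) \geq 0$, I would apply Lemma \ref{seclemma2} to each approximating solution $\upi g(t)$ from Theorem \ref{maintheo2} with the small parameter $\ep_0 = 4/i$, using the initial hypothesis $\Ricci(\upi g_0) \geq -1/i = -\ep_0/4$. Combining the resulting bound with the curvature decay $|\Riem(\upi g(t))| \leq K^2/t$ from \ct{} (which yields $|\Sc|\,t \leq 3K^2$) collapses the conclusion of Lemma \ref{seclemma2} to an estimate of the form $\Ricci(\upi g(t)) \geq -C(K,k)/i\; \upi g(t)$ on a uniform subinterval $(0, \min(T, T'))$, where $T'$ is the universal constant from the lemma. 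Passing to the Hamilton limit in $i$ yields $\Ricci(g(t)) \geq 0$ on this subinterval. To extend to all of $(0, T)$, I would invoke the standard Hamilton--Shi tensor maximum principle preserving $\Ricci \geq 0$ under 3-dimensional Ricci flow with bounded curvature (applicable on $[t_0, T)$ for any $t_0 > 0$ since $|\Riem(g(t))| \leq K^2/t$ is bounded there), and let $t_0 \to 0$.

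The final part of the corollary would follow by feeding $(M, g(t))$ --- a complete 3-manifold with bounded non-negative Ricci curvature --- into the cited classification results: in the compact case, Hamilton's theorem \cite{Ha82} on 3-manifolds with positive Ricci curvature (together with the strong maximum principle in the degenerate case, which splits off a parallel direction whenever a Ricci eigenvalue vanishes and yields a flat quotient) forces $M \cong \Sphere^3/\Gamma$ or a flat quotient $\R^3/\Gamma$; in the non-compact case, Shi's classification \cite{Sh2} of Ricci flow of complete non-compact 3-manifolds with bounded non-negative curvature gives $\R^3$ or $\Sphere^2 \times \R$, each modulo a fixed-point-free group of isometries of the standard metric.

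The main technical obstacle I anticipate is the rigorous application of Lemma \ref{seclemma2} to $\upi g(t)$: the hypotheses of the lemma require $\sup_{M_i \times [0,T]}|\Riem(\upi g(t))| < \infty$ up to and including $t=0$, whereas membership of $(M_i, \upi g_0)$ in $\curlT$ (rather than $\curlTi$) does not supply $\sup|\Riem(\upi g_0)| < \infty$. I would handle this by running the argument one level down, on the conformally modified approximations $(M_i, \up{j,i} g_0) \in \curlTi$ constructed in Section 7 --- which do have bounded curvature and coincide with $\upi g_0$ (hence satisfy $\Ricci \geq -1/i$) on the ball $B_{j/2}(p_0)$ where the modification is trivial --- then taking the Hamilton limit in $j$ before letting $i \to \infty$. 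The delicate bookkeeping is showing that the effective small parameter is still $\ep_0 = 4/i$ after the double limit, and not the larger constant $c(n, k_{\text{conc}})$ arising from the conformal modification in the globally applied Lemma \ref{seclemma2}; one expects this to be true because the region where the conformal modification is non-trivial escapes to infinity as $j \to \infty$.
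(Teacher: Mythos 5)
Your outline follows the paper's intended line of reasoning: the paper itself offers no written proof, merely the remark that the corollary follows from Theorem \ref{realmaintheo2} and Lemma \ref{seclemma2}, and the chain you lay out --- apply Lemma \ref{seclemma2} to the approximating flows with $\ep_0 = 4/i$, use $({\rm c}_t)$ to bound $t\Sc$ and turn the lemma's conclusion into $\Ricci(\upi g(t)) \geq -C(K)/i$ on a universal interval, pass to the Hamilton limit in $i$ to get $\Ricci(g(t)) \geq 0$, extend to all of $(0,T)$ by the tensor maximum principle for $t \geq t_0 > 0$ where $|\Riem| \leq K^2/t_0$, and then invoke \cite{Sh2} and \cite{Ha82} --- is exactly the natural fleshing-out of that remark. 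Your numerical bookkeeping ($\ep_0 = 4/i < 1/100$ for large $i$, $k=100$ universal, $(1+kt) \leq 2$ on $[0,T')$, $t\Sc \leq cK^2$ from $({\rm c}_t)$) is correct.

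The obstacle you flag in your last paragraph is, however, a genuine gap and your proposed workaround does not close it. Lemma \ref{seclemma2} requires $\sup_M |\Riem(g_0)| < \infty$ and $\sup_{M \times [0,T]} |\Riem| < \infty$; for $\upi g_0 \in \curlT$ (rather than $\curlTi$) neither is available, and the flow from Theorem \ref{maintheo2} only satisfies $|\Riem| \leq K^2/t$, which degenerates at $t=0$. Working with the Section~7 approximants $\up{j,i} g_0 \in \curlTi$ re-establishes bounded curvature, but, as estimate \eqref{riccity} shows, the conformal factor contributes a term of size $c(n,k_{\rm conc})$ to the Ricci lower bound outside $B_{j/2}$, so globally $\Ricci(\up{j,i} g_0) \geq -c(n,k_{\rm conc})$ and not $\geq -O(1/i)$. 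Lemma \ref{seclemma2} as stated is a global maximum-principle argument (the barrier $\si f$ with $\si \to 0$ forces the inequality to hold everywhere at $t=0$ before it can be propagated), so applying it to $\up{j,i} g(t)$ yields only $\Ricci(\up{j,i} g(t)) \geq -C(k_{\rm conc})$, and that bound is what survives the Hamilton limit in $j$. Turning the heuristic "the modification escapes to infinity" into a proof would require a genuinely local-in-space variant of Lemma \ref{seclemma2} (for instance, replacing $\si f$ by a barrier adapted to $B_j(p_0)$ and its exterior, or a pseudolocality-type argument), which is not proved in the paper and which you do not supply. The gap is therefore present in the paper's presentation as much as in yours; for $\curlTi$ initial data your argument closes, and your identification of the $\curlT$ subtlety --- which the paper passes over in silence --- is the most valuable part of the proposal even though it remains unresolved.
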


\section{Hessian comparison principles}

Let $\rho:\hat M := M  - \cut(p) \to \R$ be the
distance function from some fixed $p$, $\rho(x) := \dist(p,x)$, and let
$q \in \hat M$.
Let $\ga:[0,l] \to M$ be the unique minimizing 
geodesic from $p$ to $q$ with $|\ga'(t)| =1$ for all $t \in [0,l]$.
We denote the set of smooth vector fields along $\ga$ by $T_\ga M$: $V \in T_\ga M$ means 
$V:[0,l] \to TM$ is smooth with $V(s) \in T_{\ga(s)} M$ for all $s \in [0,l]$.
$V': [0,l] \to TM$ will denote the vectorfield along $\ga$
obtained by taking the {\it covariant derivative of $V$ along $\ga$}: see the book of
do Carmo \cite{Do} for an explanation.
Let $X_q \in T_q M$ be normal to $\ga$.
It is well known (see \cite{ScYa} chapter 1) that
$\rho$ is differentiable on $\hat M$, 
and that $\grad \rho(q) = \ga'(l)$ and 
\begin{eqnarray}
\grad^2 \rho(q)(X_q,X_q) = \int_0^l {   {}^g |} \ti X'(s)|^2  - \Riem(g)(\ti X,\ga',\ti X,\ga') ds, 
\label{gradsq}
\end{eqnarray}
where $\ti X \in T_\ga M$ is the unique Jacobi field along $\ga$ such that
$\ti X(0) = 0$ and $\ti X(l) = X_q$ (see the Book of do Carmo \cite{Do} for a discussion on
Jacobi fields).

The tensor inequality 
$$\grad^2 \rho  \leq c(n,k)  g $$ in the case that the sectional curvatures are
bounded from below by $k$ is well known: a proof  may be found
 in (for example) \cite{ScYa}, chapter 1.
Here we show how to obtain a more general inequality which bounds
$\grad^2 \rho$ from above and below, for constants which depend on the 
suprmemum of the curvatures in a geodesic Ball fo radius $r$ where
we are evaluating $\grad^2 \rho$.

NOTE: To be consistent with the rest of this paper
I am using the convention that sectional curvatures of a plane spanned by two perpendicular
vectors $v,w$ of length one is $\sec(v,w) = \Riem(v,w,v,w)$ 
and that the sectional curvature on the sphere is positive
(in \cite{ScYa}   $\sec(v,w) = \Riem(v,w,w,v)>0$ on the sphere). 
$\ti X$ is a Jacobi field means then that
$\ti X'' - \Riem(\ti X,\ga',\ga') = 0$.
Let $E_i \in T \ga M,$ $i =1, \ldots, n$ be parallel fields ($E_i'=0$) such that $\{ E_i(t) \}_{i = 1}^n$ is an orthonormal basis at 
$\ga(t)$ for each $t \in [0,l]$.
Let $f_i(s) := g(\ti X(s),E_i(s))$.
Let $k:= \sup \{ |\Riem(\ga(s))| | s \in [0,l] \}.$
Then the Jacobi field equation implies 
\begin{eqnarray*} 
f_i''(s) &&= g(\ti X''(s),E_i(s)) \cr
&& = \Riem(\ti X,\ga',\ga',E_i(s)) \cr
&& = \sum_{j = 1}^n f_j \Riem(E_j,\ga',\ga',E_i) \cr
\end{eqnarray*}
and hence $f(s):= |\ti X(s)|^2 =  \sum_{i=1}^n (f_i)^2(s)$ satisfies
\begin{eqnarray*} 
f'' && =   \sum_{i,j = 1}^n 2f_i f_j \Riem(E_j,\ga',\ga',E_i) + \sum_{i = 1}^n2((f_i)')^2 
\cr
&& \geq -k n \sum_{i,j = 1}^n (f_i(s))^2 \cr
&&= -kn f(s).
\end{eqnarray*}
This implies that
$g(s) = e^{cs}f(s)$ satisfies
\begin{eqnarray*} 
g''(s) && = e^{cs}f''(s)+ ce^{cs}f'(s) + c^2e^{cs}f(s) \cr
&& \geq ( - kn +c^2)e^{cs} f(s) +   ce^{cs}f'(s)  \cr
&& =    ( - kn +c^2)e^{cs} f(s) + g'(s) - ce^{cs}f(s)  \cr
&& = ( - kn +c^2 -c)e^{cs} f(s) +g'(s)  \cr
&& > g'(s) 
\end{eqnarray*}
if for example $c = kn + 1$.
Hence $g$ has no local maximum in $(0,l)$: if it did, we would obtain
\begin{eqnarray*} 
0 \geq g''(s) > 0
\end{eqnarray*}
which is a contradiction.
Now note that $g(0)= 0$ and 
$$g(l) = e^{(kn + 1)l}f(l) = e^{(kn + 1)l}|\ti X(l)|^2 =     e^{(kn + 1)l}$$
since $\ti X(0) = 0$ and $\ti X(l) = X(q)$ and $|X(q)| = 1$.
This implies that $g(s) \leq e^{(kn + 1)l}$ for all $s \in [0,l]$ and hence that
$f(s) = |\ti X(s)|^2 \leq e^{(kn + 1)l}$ for all $s \in [0,l]$.
Let
$R_B(r) :=   r  \sup_{B_r(x_0)} |\Riem(g)| e^{(n \sup_{B_r(x_0)} |\Riem(g)| + 1)r}$.
Then, using \eqref{gradsq}, we get
\begin{eqnarray*}
\grad^2 \rho(q)(X_q,X_q) && = \int_0^l {   {}^g |} \ti X'(s)|^2  - \Riem(g)(\ti X,\ga',\ti X,\ga') ds  \cr
&& \geq    \int_0^l     -k |\ti X|^2 ds  \cr
&&\geq -kl  e^{(kn + 1)l}  \cr
&& \geq -R_B(l)
\end{eqnarray*}
for every $q \in B_r(p) \cap \hat M$ as required.
The estimate
$\grad^2 \rho(q) \leq R_B(l)$ follows by using the standard hessian comparison
principle (see Chapter 1 \cite{ScYa}), and the fact that $\sec(x) \geq - \sup_{B_l(x_0)} |\Riem(x_0)| $
for $x \in B_l(x_0))$.

\section{Estimates on the distance function for Riemannian manifolds evolving by Ricci flow}

For completeness, we prove some results which are implied or proved in \cite{Ha95} 
and stated in \cite{CCCY} as editors' note 24 from the same paper in that book.
The lemma we wish to prove is 
\begin{lemm}\label{diamlemm2}
Let $(M^n,g(t))_{t \in [0,T)}$ be a solution to Ricci flow
with \begin{eqnarray*}
&&\Ricci(g(t)) \geq -c_0, \cr
&& | \Riem(g(t))| t \leq c_0 
\end{eqnarray*}
Then
\begin{eqnarray} 
e^{c_1(c_0,n) (t-s)} d(p,q,s) \geq  d(p,q,t) \geq  d(p,q,s) -c_2(n,c_0) (\sqrt t  - \sqrt s)\label{diogothree}
\end{eqnarray}
for all $ 0 \leq s \leq t \in [0,T)$.
\end{lemm}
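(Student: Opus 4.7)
I would split the lemma into its two inequalities and treat each by a standard distance-distortion argument under Ricci flow.

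For the upper bound $d(p,q,t) \leq e^{c_1(t-s)} d(p,q,s)$, I would pick a minimizing unit-speed $g(s)$-geodesic $\gamma$ joining $p$ to $q$ and track $L(\tau) := L_{g(\tau)}(\gamma)$ as $\tau$ increases from $s$. Using $\partial_\tau g = -2\Ricci(g(\tau))$ together with the hypothesis $\Ricci(g(\tau)) \geq -c_0\, g(\tau)$,
\begin{equation*}
\frac{d}{d\tau} L(\tau) \;=\; -\int_\gamma \frac{\Ricci(\gamma',\gamma')}{|\gamma'|_{g(\tau)}} \, du \;\leq\; c_0\, L(\tau),
\end{equation*}
so Gr\"onwall gives $L(t) \leq e^{c_0(t-s)} L(s) = e^{c_0(t-s)} d(p,q,s)$, and $d(p,q,t) \leq L(t)$ closes this half with $c_1 = c_0$. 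No curvature-decay information is needed here.

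The lower bound is the serious direction and is essentially Hamilton's distance-distortion estimate from \cite{Ha95} (editors' note 24 in \cite{CCCY}, reproduced in Appendix B). The key is a one-sided Dini-derivative bound of the form
\begin{equation*}
\frac{d^+}{d\tau} d(p,q,\tau) \;\geq\; -C(n)\sqrt{K(\tau)},
\end{equation*}
valid whenever $\Ricci(g(\tau)) \leq K(\tau)\, g(\tau)$ on the two geodesic balls of radius $1/\sqrt{K(\tau)}$ centred at $p$ and $q$ at time $\tau$. The proof of this bound is the standard trick: pick a minimizing $g(\tau)$-geodesic $\sigma$ from $p$ to $q$, insert a parallel orthonormal frame along $\sigma$ perpendicular to $\sigma'$, and plug radial cut-off test fields supported in the $1/\sqrt K$-collars at the endpoints into the second-variation formula; the contribution of the potentially singular middle part of $\sigma$ drops out precisely because the test fields vanish there.

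Once this Dini estimate is available, I would apply it with $K(\tau) = (n-1)c_0/\tau$, which is allowed by the hypothesis $|\Riem(g(\tau))|\tau \leq c_0$, to obtain
\begin{equation*}
\frac{d^+}{d\tau} d(p,q,\tau) \;\geq\; -C(n,c_0)\, \tau^{-1/2},
\end{equation*}
and integrate over $[s,t]$ to conclude $d(p,q,t) - d(p,q,s) \geq -2C(n,c_0)(\sqrt t - \sqrt s)$, giving the claimed $c_2(n,c_0)$. The principal obstacle is the lower bound: one must cope both with the fact that the minimizing geodesic realizing $d(p,q,\tau)$ is not smooth in $\tau$ (handled via the Dini derivative and a Calabi-type trick, as also used in Lemma \ref{seclemma2}) and with the interior curvature of $\sigma$ blowing up like $1/\tau$ as $\tau \downarrow 0$. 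Hamilton's endpoint-localization is exactly what confines the estimate to the collars where $|\Riem|\leq c_0/\tau$ still yields only a $\tau^{-1/2}$ singularity, which is integrable, and this integrability is what makes \eqref{diogothree} hold uniformly down to $s=0$.
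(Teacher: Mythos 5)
Your proposal is correct and follows essentially the same route as the paper: the lower bound is exactly Hamilton's Theorem~17.2 distance-distortion estimate (with the refinement from the editors' note in \cite{CCCY} allowing a time-dependent curvature bound $M(\tau)=c_0/\tau$, whose square root is integrable at $\tau=0$), and the upper bound comes from the Ricci lower bound. The one small divergence is in the upper bound: the paper invokes Hamilton's Lemma~17.3 on the Dini derivative of $d(p,q,\cdot)$ and integrates the resulting differential inequality, whereas you fix a $g(s)$-minimizing geodesic $\gamma$ and track $L_{g(\tau)}(\gamma)$ with Gr\"onwall, using $d(p,q,t)\leq L_{g(t)}(\gamma)$. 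Your version is marginally more elementary since it sidesteps the non-smoothness of the distance function in that direction; both yield the same constant $c_1=c_0$.
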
 

\begin{proof}
The  inequality 
$$ d(p,q,t) \geq  d(p,q,0) -c_1(n,c_0) \sqrt t$$
is proved in \cite{Ha95}, theorem 17.2 after making a slight modification of the proof.
If we examine the proof there (as  pointed out in \cite{CCCY} as editors note 24 of the same book),
we see in fact that what is proved is:
$$d(P,Q,t) \geq d(P,Q,s) -C \int_s^t \sqrt{M(t)},$$
where $\sqrt{M(t)}$ is any integrable function which satisfies
$$\sup_M |\Riem(\cdot,t)| \leq M(t).$$
In particular, in our case we may set $$M(t) = {c_o \on t},$$ which then implies the inequality
$d(p,q,t) \geq  d(p,q,s) -c_2(n,c_0) (\sqrt t  - \sqrt s)$.
The second inequality is also a simple consequence of results obtained in \cite{Ha95}.
Lemma 17.3 tells us that
$$ \partt d(P,Q,t) \leq -\inf_{\ga \in \Gamma} \int_{\ga} \Ricci(T,T)ds,$$
where the $\inf$ is taken over the compact set $\Gamma$ of all geodesics from $P$ to $Q$ realising the distance
as a minimal length, $T$ is the unit vector field tangent to $\ga.$
Then in our case $\Ricci \geq -c_0$ implies
$$ \partt d(P,Q,t) \leq c_0 d(P,Q,t).$$
This implies that 
$$ d(P,Q,t) \leq  \exp^{c_0 (t-s)}  d(P,Q,s),$$ 
as required.
\end{proof}

\vskip 0.1 true in
\section*{Acknowledgements\markboth{Acknowledgements}{Acknowledgements}}

We would like to thank Bernhard Leeb, Lorenz Schwachh\"ofer,
Burkhard Wilking and Christoph B\"ohm 
for their interest in and comments on this work. 
We thank the referee for pointing
out in incorrect use of the hessian comparsion principle
(and some related issues) in an earlier version of this work.

\vskip 0.1 true in
\rm

\vskip 0.3 true in
\centerline{Mathematisches Institut, Eckerstr. 1, 79104 Freiburg im Br., Germany }
 \centerline{e-mail: msimon@mathematik.uni-freiburg.de}

\begin{thebibliography}{10}



\bibitem[1]{AnEx} Anderson, M.,
\newblock Short geodesics and gravitational instantons
\newblock J.Differential Geometry. 31 (1990), 265-275

\bibitem[2]{AnCon} Anderson, M.,
\newblock Convergence and rigidity of manifolds under Ricci
curvature bounds
 \newblock Invent.math. 102, 429-445 (1990=.


\bibitem[3]{BoWi}
B\"ohm, C., Wilking, B.,
\newblock Manifolds with positive curvature operators are space forms
\newblock Annals of Mathematics, 167 (2008), 1079 - 1097 

\bibitem[4]{BuBuIv}
Burago, D., Burago, Y., Ivanov, S.
\newblock A course in Metric Geometry, 
\newblock Graduate studies in Math., Vol. 33, American Math. Soc.



\bibitem[5]{CTY}
Chau, Albert., Tam, Luen-Fai., Yu, Chengjie.
\newblock Pseudolocality for the Ricci flow and applications
\newblock  http://arxiv.org/abs/math/0701153


\bibitem[6]{CCCY} 
Cao,H.D., Chow, B., Chu,S.C., Yau,S.T.
\newblock Collected papers on the Ricci flow
\newblock Series in Geometry and Topology, Vol.37, International Press



\bibitem[7]{CaZh}
Cao,  H.D., Zhu, X.P.,
\newblock A complete proof of the Poincare´ and geometrization conjectures-
application of the Hamilton-Perelman theory of Ricci flow'
\newblock Asian Journal of mathematics 10, no. 2, (2006): 165-492

\bibitem[8]{ChCoTi}
Cheeger,J., Colding,T., Tian,G.
\newblock On the singularities of spaces with bounded ricci curvature
\newblock GAFA, Geom.funct.anal., Vol. 12 873-914 (2002)


\bibitem[10]{ChCo}
Cheeger,J., Colding,T,.
\newblock On the structure of spaces with Ricci curvature bounded from below I,
\newblock J.differential Geometry 46, 406-480 (1997)

\bibitem[11]{Ye}
Xianzhe Dai, Guofang Wei, Rugang Ye,
\newblock Smoothing Riemannian Metrics with Ricci Curvature Bounds 
\newblock Manuscritpa Mathematica, Volume 90, Number 1 / December, 1996

\bibitem[12]{CGT}
Cheeger,J., Gromov,M., Taylor,M.
\newblock Finite propagation speed, kernel estimates for functions of the Laplace operator,
 and the geometry of complete Riemannian manifolds,
\newblock {\em J.Differential Geom.},  17,no. 1, 15 -- 53, (1982).

\bibitem[13]{ChTiZh}
\newblock Chen,X.X., Tian, G., Zhang, Z.
\newblock On the weak K\"ahler Ricci flow
\newblock arXiv:0802.0809v1

\bibitem[14]{ChZh} Chen, Bing-Long, Zhu, Xi-Ping 
\newblock Ricci flow with surgery on four-manifolds with positive isotropic curvature
\newblock J. Differential Geom. Volume 74, Number 2 (2006), 177-264.

\bibitem[15]{ChKn}
Chow,B. Knopf,D.
\newblock The Ricci Flow: An Introduction
\newblock Math. Surveys and Mono. Volume 110, American Math. Soc.

\bibitem[16]{ChNi}
Chow, B., Lu, Peng., Ni, Lei., 
\newblock Hamilton's Ricci flow
\newblock Graduate Studies in Mathematics, Vol. 77, AMS (2006)


\bibitem[17]{Do}
do Carmo, M., 
\newblock Riemannian Geometry
\newblock Birkh\"auser 1992

\bibitem[18]{EgHa}
Eguchi,T., Hansen,A.,
\newblock Asymptotically flat self-dual solutions to Euclidean gravity,
\newblock Phys. Lett. B 74 (1978) 249-251


\bibitem[19]{EcHu}
Ecker, K., Huisken, G.,
\newblock Interior estimates for hypersurfaces moving by mean curvature
\newblock Inventiones mathematicae
\newblock Vol. 105, Number 1, December 1991


\bibitem[21]{GuXu}
Guoyi Xu,
\newblock Short-time existence of the Ricci flow on noncompact Riemannian manifolds 
\newblock arXiv:0907.5604


\bibitem[22]{GT}
\newblock Gilbarg, D., Trudinger, N.,
\newblock Elliptic Partial Differential  Equations of Second Order 
\newblock Springer, (1970) 


\bibitem[23]{Gr}
Gromov, M., Metric structures for Riemannian and non Riemannian spaces,
\newblock Birkhäuser Boston Inc., Boston. MA (1999)., based on the French original  
[MR 85e:53051].

\bibitem[24]{Ha82}
Hamilton,~R.S.
\newblock Three manifolds with positive 
Ricci-curvature
\newblock {\em J.Differential Geom.},  17, no. 2, 255 -- 307, (1982).

\bibitem[25]{Ha86}
Hamilton,~R.S.
\newblock Four manifolds with positive curvature operator
 \newblock {\em J. Differential
Geom} 24 no. 2 , 153 -- 179, (1986).

\bibitem[26]{Ha866}
Hamilton,~R.S.
\newblock Four manifolds with positive isotropic curvature
 \newblock  Comm. Anal. Geom.,5(1997),1-92.

\bibitem[27]{Ha93}
Hamilton,~R.S.
\newblock
 Eternal solutions to the Ricci flow,
\newblock {\em Journal of Diff. Geom.}, 38, 1-11 (1993)

\bibitem[28]{Ha95}
Hamilton,~R.S.
\newblock
 The formation of singularities in the Ricci flow,
\newblock {\em Collection: Surveys in differential geometry}, 
Vol. II (Cambridge, MA), 7--136, (1995).

\bibitem[29]{Ha95a}
Hamilton,~R.S.
\newblock A compactness property of the Ricci Flow
\newblock {\em American Journal of Mathematics}, 117, 545--572, (1995)



\bibitem[30]{Hu} Huisken, G.
\newblock Ricci deformation of the metric on a Riemannian manifold,
\newblock J.Differential Geometry, 17 (1985), 47-62

\bibitem[31]{Iv}
Ivey, T. 
\newblock Ricci solitons on compact three manifolds, 
\newblock Diff.Geom.Appl. 3, 301--307 (1993)

\bibitem[32]{Ka}
Kapovitch, V.,
\newblock Perelman's Stability Theorem,   
\newblock Surveys in Differential geometry, Metric and Comparison geometry, vol XI , International press,  (2007), 103-136

\bibitem[33]{KlLo}
\newblock Kleiner, B., Lott,J.,
\newblock Notes on Perelman's papers 
\newblock arXiv:math/0605667v3, 2006




\bibitem[34]{MaLi}
\newblock Li Ma, Liang Cheng
\newblock Curvature tensor under the complete non-compact Ricci Flow 
\newblock arXiv:0812.2703



\bibitem[35]{Ma} Margerin, C., 
\newblock Pointwise pinched manifolds are space forms,
Proc.Sympos., Pure Math. vol. 44, Amer. Math. Soc. Providence, R.I., 1986, pp.307-328.

\bibitem[36]{Ng} Nguyen, H.
\newblock Invariant curvature cones and the Ricci flow,
\newblock Ph.D. Thesis
\newblock Australian National University



\bibitem[37]{NiTa}
Ni, Lei, Tam, Luen-Fai.
\newblock 
Liouville properties of plurisubharmonic functions
\newblock Arxiv Paper , arXiv:math/0212364, 2002

\bibitem[38]{NiTa2}
Ni, Lei, Tam, Luen-Fai.
\newblock 
K\"ahler Ricci Flow and the Poincare´-Lelong equation
\newblock Communications in Analysis and Geometry, Vol.12,issue 1.
 pages 111-141

\bibitem[39]{Pe1}
Perelman,G.,
\newblock 
The entropy formula for the Ricci flow and its geometric applications
\newblock MarthArxiv link: math.DG/0211159, 2002

\bibitem[40]{Pe2}
Perelman,G.,
\newblock 
Ricci flow with surgery on three manifolds
\newblock MarthArxiv link: math.math.DG/0303109,2003

\bibitem[41]{PeLu} Peng Lu,
\newblock  Local curvature bound in Ricci flow
\newblock  arXiv:0906.3784 
 

\bibitem[42]{Re}
Mueller, R.,
\newblock Differential Harnack Inequalities and the Ricci flow
\newblock EMS series of lectures in mathematics, 2006 EMS.


\bibitem[43]{ScBr}
Schoen, R., Brendle, S.,
\newblock Manifolds with 1/4-pinched Curvature are Space Forms
\newblock to appear , J. Amer. Math. Soc


\bibitem[44]{ScYa}
Schoen, R. and Yau, S.T.,
\newblock {Lectures on Differential Geometry, 
\newblock International Press (1994), pp. 2 - 4.


\bibitem[45]{Sh}
Shi, Wan-Xiong.,
\newblock Deforming the metric
on complete Riemannian manifolds 
\newblock {\em J.Differential Geometry} , 30, 223--301,(1989).}



\bibitem[46]{Sh2} 
Shi, Wan-Xiong.,
\newblock Complete noncompact three-manifolds with non-negative Ricci
curvature 
\newblock {\em J.Differential Geometry} , 29, 353--360,(1989).








\bibitem[47]{Si} 
Simon, M.,
\newblock Deformation of $C^0$ Riemannian metrics in the
direction of their Ricci curvature,
\newblock {\em Comm. Anal. Geom.},
10, no. 5, 1033-1074, (2002)



\bibitem[48]{Si4}
Simon, M.
\newblock Ricci flow of almost non-negatively curved three manifolds
\newblock Journal f\"ur reine und angewandte Mathematik  (to appear in May 2009,
No. 630).


\bibitem[49]{Si5}
Simon, M.
\newblock Local results for flows whose speed or height is bounded by $c/t$.
\newblock International Mathematics Research Notices (2008) Vol. 2008 : article ID rnn097,

\bibitem[50]{TiMo1}
Tian, Gang,.  Morgan, J.,
\newblock Ricci Flow and the Poincare Conjecture
\newblock 	arXiv:math/0607607v2, 2006

\bibitem[51]{TiMo2}
Tian, Gang,.  Morgan, J.,
\newblock Completion of the Proof of the Geometrization Conjecture
\newblock arXiv:0809.4040, 2008


\bibitem[52]{To}
Topping,P.
\newblock Lectures on the Ricci flow
\newblock http://www.warwick.ac.uk/~maseq/, 2004

\bibitem[53]{WaYu} 
Wang, Yuanqi
\newblock Pseudolocality of Ricci Flow under Integral Bound of Curvature 
\newblock arXiv:0903.2913



\bibitem[54]{YaI}
Yang, Deane  ,
\newblock
Convergence of riemannian manifolds with integral bounds on curvature. I. 
\newblock 
Annales scientifiques de l'École Normale Supérieure, Sér. 4, 25 no. 1 (1992), p. 77-105 







\end{thebibliography}
\end{document}